\documentclass[reqno]{amsart}

\usepackage{latexsym}
\usepackage{mathrsfs}
\usepackage[mathscr]{eucal}
\usepackage{amscd}
\usepackage{amsxtra}
\usepackage{amssymb}
\usepackage{amsbsy}
\usepackage{graphicx}
\usepackage{amsopn}
\usepackage{verbatim}
\usepackage[T1]{fontenc}
\usepackage{mathtools}
\usepackage[dvipsnames]{xcolor}
\definecolor{BritishGreen}{rgb}{0.01, 0.26, 0.15}
\usepackage{appendix}
\usepackage{hyperref}
\usepackage{microtype}
\usepackage[english]{babel}
\hypersetup{pdfpagemode={UseOutlines},
bookmarksopen=true,bookmarksopenlevel=0,hypertexnames=false,
colorlinks=true,citecolor=black,linkcolor=black,urlcolor=black,
pdfstartview={FitV},unicode,breaklinks=true}
\usepackage{enumitem}
\usepackage[normalem]{ulem}
\usepackage{stackengine}
\stackMath

\usepackage[textwidth=15cm, textheight=23cm, centering]{geometry}

\makeatletter
\@namedef{subjclassname@2020}{%
  \textup{2020} Mathematics Subject Classification}
\makeatother

\newcommand{\defset}[2]{\left\{#1\left|~#2\right.\right\}}
\newcommand{\set}[1]{\left\{#1\right\}}

\newtheorem{theorem}{Theorem}[section]
\newtheorem{proposition}[theorem]{Proposition}
\newtheorem{corollary}[theorem]{Corollary}
\newtheorem{lemma}[theorem]{Lemma}
\newtheorem{question}[theorem]{Question}

\theoremstyle{definition}
\newtheorem{mydef}[theorem]{Definition}

\theoremstyle{remark}
\newtheorem{remark}[theorem]{Remark}

\newtheorem{claim}[theorem]{Claim}

  \def\cF{{\mathcal{F}}}
  
  \def\cL{{\mathcal{L}}}

 \def\bQ{{\mathbb{Q}}}

 \def\bZ{{\mathbb{Z}}}

\newcommand{\w}{\omega}

\newcommand{\rest}{\upharpoonright}
\newcommand{\finishclaim}{\hfill\ensuremath{_{Claim}\square}}

\newcommand{\age}{\mathsf{age}}
\newcommand{\rk}{\mathsf{rk}}
\newcommand{\embed}{\hookrightarrow}
\newcommand{\ran}{\mathrm{ran}}

\newcommand{\simup}[1]{\ensuremath{%
  \mathrel{\stackon[1pt]{\sim}{\scriptstyle #1}}%
}}

\begin{document}

\title[A rank function for Fra\"{\i}ss\'{e} classes and the rank property]{A rank function for Fra\"{\i}ss\'{e} classes and the rank property}

\author[C. L\'opez-Callejas]{Carlos L\'opez-Callejas}
\address{Department of Mathematics, Bar-Ilan University, Ramat-Gan 5290002, Israel.}
\email{lopezcc@biu.ac.il}

\author[J. Navarro-Castillo]{Jareb Navarro-Castillo}
\address{Centro de Ciencias Matem\'aticas, Universidad Nacional Aut\'onoma de M\'exico, Campus Morelia, 58089, Morelia, Michoac\'an, M\'exico.}
\address{Institut für diskrete Mathematik
und Geometrie, Technische Universität Wien, 1040, Vienna, Austria}
\email{jareb@matmor.unam.mx}

\keywords{rank function, rank property, Fraïssé class, free amalgamation, full extension property, tournament, linear order, Hausdorff rank}
\subjclass[2020]{03C13,03C15,05C20,05C63,06A05}

\date{}

\begin{abstract}
Given a hereditary class $\cF$ of finite relational structures, the rank function $\rk:\sigma\cF\to\w_1\cup\{\infty\}$, introduced by Kubi\'{s} and Shelah, measures how far a countable structure is from being universal within its class: $\rk(X)=\infty$ if and only if the Fra\"{\i}ss\'{e} limit embeds into $X$. We say that $\cF$ has the \emph{Rank Property} (RP) if every countable ordinal is realized as the rank of some $X\in\sigma\cF$.
 
We develop the basic theory of the rank function and establish RP for three families of classes: those satisfying the free amalgamation property and the full extension property (covering graphs, hypergraphs, and many others); finite tournaments; and finite linear orders. For the latter, we compute the rank of every countable ordinal: if $\w^{\beta_1}\cdot c_1$ is the leading Cantor normal form term of $\alpha\geq\w$, then $\rk(\alpha)=\w\cdot\beta_1+\lfloor\log_2 c_1\rfloor$.
\end{abstract}

\maketitle

\section{Introduction}

A classical problem in Fra\"{\i}ss\'{e} theory is to determine which countable structures are universal for a given hereditary class --- that is, which structures embed every finite member of the class (see, e.g., \cite{fraisse, macpherson-survey, kubis-fraisse-sequences} for general background). A natural refinement of this problem is to measure, in a graded way, \emph{how far} a countable structure is from being universal.

The classical framework for this question is Fra\"{\i}ss\'{e} theory~\cite{fraisse}: given a hereditary class $\cF$ of finite relational structures satisfying the joint embedding property and the amalgamation property, there exists a unique countable \emph{Fra\"{\i}ss\'{e} limit} that is universal and ultrahomogeneous for $\cF$. Familiar examples include the Rado graph, the rational order $(\bQ,<)$, the generic tournament, and the generic partial order~\cite{macpherson-survey}.

The Fra\"{\i}ss\'{e} limit is the most universal object in $\sigma\cF$,\footnote{Here $\sigma\cF$ denotes the class of all countable structures isomorphic to unions of chains in $\cF$.} but what about the structures that fall short of full universality? How can one measure, in a precise and graded way, the \emph{degree of universality} of a countable structure?

The \emph{rank function} $\rk:\sigma\cF\to\w_1\cup\{\infty\}$ (Definition \ref{2:definition_rankFunction}) provides a natural answer to this question: ordinals serve as a natural scale of partial universality: $\rk(X)=0$ means there are single-element structures in $\mathcal{F}$ that are not substructures of $X$,  $\rk(X)=\infty$ means that the Fra\"{\i}ss\'{e} limit embeds into $X$ (see Proposition~\ref{rk infty iff universal}), and the countable ordinal values in between form a fine hierarchy. 

The rank admits a game-theoretic interpretation (Proposition~\ref{rank via game}): two players alternately choose one-point extensions and realizations in $X$, and the rank is infinite if and only if Player~\textsf{II} can keep the game going forever.

The central question driving this paper is:
\begin{quote}
\emph{For which hereditary classes $\cF$ does the rank function achieve every countable ordinal value?}
\end{quote}
We say that $\cF$ has the \emph{Rank Property} (RP) if for every $\alpha<\w_1$ there exists $X\in\sigma\cF$ with $\rk(X)=\alpha$. The Rank Property can be understood as a fine-structure analysis of the Fra\"{\i}ss\'{e} limit and its class: it asserts that the hierarchy of partial universality is as rich as possible, with no ordinal levels missing.

Since $\rk(X)\geq\w$ forces $\cF$ to be a Fra\"{\i}ss\'{e} class (Theorem~\ref{rk geq omega implies AP}) and is equivalent to $\age(X)=\cF$ (Proposition~\ref{rk geq omega iff age}), the Rank Property forces $\cF$ to be Fra\"{\i}ss\'{e} (Corollary~\ref{RP implies Fraisse}). The converse does not hold: the class of finite sets is Fra\"{\i}ss\'{e} but fails RP. Understanding which Fra\"{\i}ss\'{e} classes have RP is a natural structural problem at the intersection of combinatorics and model theory.

We establish the Rank Property in three settings, each requiring different techniques.

\textbf{Free amalgamation classes.} If $\cF$ satisfies the \emph{free amalgamation property} (FAP) and the \emph{full extension property} (FEP), we prove RP (Theorem~\ref{every rank with extra caso general}). The proof constructs finite ``universal'' structures $H_n$ with $\rk(H_n)=n$, then builds countable structures of any prescribed rank $\gamma+n$ by amalgamating copies of previously constructed structures over $H_n$ as a \emph{kernel}. The free amalgamation property ensures that the leaves of the kernel construction are pairwise disconnected, which is essential for the upper bound on the rank. This covers the classes of finite graphs, finite hypergraphs, finite directed graphs, edge-colored structures, and, more generally, any class of finite structures in a finite relational language with no axioms beyond irreflexivity of non-unary relations\footnote{By irreflexivity we mean that $R(x_0,\dots,x_{k-1})$ implies $x_i\neq x_j$ for $i\neq j$. This generalizes the usual notion for binary relations.} (Corollary~\ref{examples FAP FEP}). In the language of Conant~\cite{conant-free-amalgamation}, such classes give rise to \emph{free amalgamation theories}, whose model-theoretic properties (NSOP$_4$, rosiness, 1-basedness when simple) have been extensively studied.

\textbf{Tournaments.} The class of finite tournaments has strong amalgamation but not FAP: every pair of vertices must receive a directed edge. Its Fra\"{\i}ss\'{e} limit is the generic countable tournament, classified by Lachlan~\cite{lachlan-tournaments}. We prove RP for tournaments (Corollary~\ref{tournament RP}) by combining a \emph{Cross-Piece Bound} on kernel amalgamations (Lemma~\ref{tournament cross piece bound}) with a \emph{rigidity property} of the structures $H_n$ (Lemma~\ref{tournament rigidity corollary}), which forces realizations of certain types to lie inside $H_n$. These ingredients replace the disconnection between components in the FAP setting; the rest of the construction is formally parallel to the kernel amalgamation for graphs.

\textbf{Linear orders.} The class of finite linear orders is a Fra\"{\i}ss\'{e} class (with $(\bQ,<)$ as its Fra\"{\i}ss\'{e} limit) but does not satisfy FAP: every amalgam must totally order the elements, then there is no ``free'' choice. We introduce the \emph{Interval Characterization} (Proposition~\ref{interval characterization}): the rank of a finite substructure $F$ in a linear order $Y$ equals the minimum rank of the intervals determined by $F$ in $Y$. This reduces rank computations to a one-dimensional problem and replaces the kernel machinery entirely. The key upper-bound tool is a \emph{Pigeonhole Lemma for intervals} (Lemma~\ref{pigeonhole for intervals}), which bounds the rank in terms of the number of convex pieces with the ``correct'' tail behavior.

For finite linear orders, the rank is completely determined by cardinality: $\rk(Y)=\lfloor\log_2(|Y|+1)\rfloor$ (Theorem~\ref{rank of finite linear orders}). For infinite ordinals, we compute the rank of every countable ordinal (Theorem~\ref{rank CNF}): if $\alpha\geq\w$ has Cantor normal form with leading term $\w^{\beta_1}\cdot c_1$, then $\rk(\alpha)=\w\cdot\beta_1+\lfloor\log_2 c_1\rfloor$. In particular, the rank depends only on the leading term and refines the classical Hausdorff rank~\cite{hausdorff-grundzuge}: $\rk(\alpha)=\w\cdot\rk^{H}(\alpha)+\lfloor\log_2 c_1\rfloor$ (Remark~\ref{rank vs Hausdorff}). We extend this to linear orders of the form $\bZ\cdot\alpha$ (Corollary~\ref{rank Z CNF}). The Rank Property for finite linear orders (Corollary~\ref{RP for linear orders}) then follows directly from these computations.

The rank function was introduced by Kubi\'{s} and Shelah in~\cite{kubis-shelah}, within the framework of \emph{evolution systems}~\cite{kubis-radecka} --- a category-theoretic generalization of Fra\"{\i}ss\'{e} theory where ``transitions'' (one-point extensions) replace the classical age/amalgamation setup. In that setting, the rank measures the universality of objects in a locally countable evolution system. The present paper originated when the authors visited Prague and learned of the rank function from Kubi\'{s}. We develop the theory from scratch in a purely combinatorial setting, without requiring the categorical machinery. The forthcoming paper by Kubi\'{s} and Shelah~\cite{kubis-shelah} develops the rank in the full generality of evolution systems.

The Rank Property is related to, but distinct from, several other rank notions. The \emph{K-rank} of Guingona and Parnes~\cite{guingona-parnes} is a rank on partial types parametrized by a strong amalgamation Fra\"{\i}ss\'{e} class; for linear orders, it coincides with dp-rank. The Hausdorff rank~\cite{hausdorff-grundzuge} on scattered linear orders measures depth in the condensation hierarchy, and our Theorem~\ref{rank CNF} shows precisely how the rank function refines it.

The article is organized as follows. Section~\ref{sec rank} introduces the rank function and establishes its basic properties: monotonicity under substructures, intermediate values, and the connection to automorphisms. Section~\ref{sec rank and AP} develops the general theory connecting the rank to the amalgamation property: rank controls embeddability, $\rk\geq\w$ characterizes Fra\"{\i}ss\'{e} classes, and $\rk=\infty$ characterizes universality. Section~\ref{FAP and FEP section} proves RP for classes with FAP and FEP via the kernel construction. Section~\ref{tournaments section} proves RP for tournaments via the Cross-Piece Lemma. Section~\ref{sec linear orders} develops the theory for linear orders: the Interval Characterization, the Rank Property, the rank of countable ordinals, and the rank of $\bZ\cdot\alpha$. Section~\ref{sec questions} collects open questions for further investigation.
\section{The rank function and its basic properties}\label{sec rank}

Fix $\cL$, a countable relational language without constants, and $\cF$ a countable up to isomorphisms class of $\cL$-structures, which is also hereditary and closed under isomorphisms.

In this section every structure will be an $\mathcal{L}$-structure. By $\sigma\cF$, we denote the class of all countable structures isomorphic to unions of $\subseteq$-chains in $\cF$. Usually, $A,B,C$, etc., denote structures in $\cF$, while $X,Y,Z$, etc., denote structures in $\sigma\cF$.\footnote{As usual, when there is no risk of confusion we abuse notation and identify a structure with its underlying universe; for instance, we write $a\in A$ to mean that $a$ is an element of the universe of $A$, we write $|A|$ for the cardinality of that universe, and if $G$ extends $F$ by one element we write $G=F\cup\{w\}$ for the appropriate $w$.} By $\age(X)$ we denote the set of all finite substructures of $X$.

We write $A \simeq B$ to indicate that $A$ and $B$ are isomorphic, $A \leq B$ to indicate that $A$ is a substructure of $B$ (for all $a \in [A]^{<\omega}$ and every $R \in \mathcal{L}$, $a \in R(A)$ if and only if $a \in R(B)$; in the class of finite graphs, \textit{subgraph} means \textit{induced subgraph}), $A<B$ for proper substructure, and $A \embed B$ for embeddability.

Recall that $\mathcal{F}$ is a \textit{Fra\"iss\'e class} if the following hold:
\begin{enumerate}
    \item \textit{Joint Embedding Property} (JEP): if $A,B\in \mathcal{F}$, then there exists $C\in \mathcal{F}$ such that $A,B\embed C$;
    \item \textit{Amalgamation Property} (AP): if $A,B,C \in \mathcal{F}$ with $A\embed B$ and $A\embed C$, then there exists $D\in \mathcal{F}$ such that $B,C \embed D$ and the respective diagram commutes.
\end{enumerate}
In this case, there exists a unique (up to isomorphism) countable $X \in \sigma\mathcal{F}$ that is both universal ($\age(X)=\cF$) and ultrahomogeneous (every isomorphism between finite substructures extends to an automorphism). This $X$ is the \textit{Fra\"iss\'e limit} of $\mathcal{F}$~\cite{fraisse, hodges-model-theory}.

\begin{mydef}
Let $A,B$, and $C$ be structures. We say that $B$ and $C$ are \textit{isomorphic over} $A$ if $A\leq B, C$, and there is an isomorphism from $B$ to $C$ that is the identity on $A$.
\end{mydef}

\begin{mydef}
Let $A,B,C$, and $X$ be structures such that $A\leq B$ and $A\leq X$. Then:
    \begin{itemize}
        \item We say that $B$ is a \textit{prime extension} of $A$ if $|B\setminus A|=1$.
        \item If $C\leq X$, then we say that $C$ is \textit{a realization of $B$ in $X$} if $B$ and $C$ are isomorphic over $A$.
    \end{itemize}
\end{mydef}

\begin{mydef}\label{2:definition_rankFunction}
Let $X\in\sigma\cF$, $F\in\age(X)$ and $\alpha$ an ordinal.
    \begin{itemize}
        \item $\rk_X(F)\geq 0$ always.
        \item $\rk_X(F)\geq \alpha+1$ if every prime extension
              of $F$ in $\mathcal{F}$ has a realization $C$ in $X$ such that 
              $\rk_X(C)\geq\alpha$.
        \item For $\alpha$ a limit ordinal, $\rk_X(F)\geq\alpha$ 
              if $\rk_X(F)\geq \beta$ for every $\beta <\alpha$.
        \item $\rk_X(F)=\sup\defset{\alpha}{\rk_X(F)\geq\alpha}$, 
              with the convention that $\rk_X(F)=\infty$ if 
              $\rk_X(F)\geq\alpha$ for every 
              ordinal $\alpha$.\footnote{We adopt the convention 
              that $\infty>\alpha$ for every ordinal $\alpha$.}
        \item $\rk(X):=\rk_X(\emptyset)$.
    \end{itemize}
\end{mydef}

A routine transfinite induction shows that if $\rk_X(F)\geq\alpha$ and $\beta\leq\alpha$, then $\rk_X(F)\geq\beta$. Also, it is standard to check that the rank function takes values only on $\w_1\cup\{\infty\}$.

\begin{remark}\label{characterization of rank 0}
For $F\in\age(X)$, we have $\rk_X(F)=0$ if and only if there is some prime extension $G$ of $F$ that cannot be realized in $X$.
\end{remark}

Note that Remark~\ref{characterization of rank 0} applied to the class of graphs shows that for a graph $X$ and a vertex $v \in X$, we have $\rk_X(\{v\}) = 0$ if and only if $v$ is an isolated vertex or $v \sim x$ for all $x \in X \setminus \{v\}$. This is because in both scenarios, there exists a prime extension of $\{v\}$ that cannot be realized in $X$: in the first case, the prime extension is $K_2$, and in the second, it is the complement of $K_2$.

\begin{mydef}\label{def rank game}
Let $X\in\sigma\cF$ and $F\in\age(X)$. The \emph{rank game on $X$ starting from $F$} is played by two players, \textsf{I} and \textsf{II}, over $\w$ rounds. At round $n$ (with current position $F_n\in\age(X)$, where $F_0=F$):
\begin{enumerate}
    \item \textsf{I} chooses a prime extension $B$ of $F_n$ in $\cF$.
    \item \textsf{II} responds with a realization $C$ of $B$ in $X$, and sets $F_{n+1}=C$.
\end{enumerate}
\textsf{II} loses at round $n$ if no realization exists; \textsf{II} wins if the game lasts all $\w$ rounds.
\end{mydef}

\begin{proposition}\label{rank via game}
Let $X\in\sigma\cF$ and $F\in\age(X)$. Then $\rk_X(F)=\infty$ if and only if \textsf{II} has a winning strategy in the rank game on $X$ starting from $F$.
\end{proposition}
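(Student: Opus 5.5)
Both directions go through the set $W=\defset{G\in\age(X)}{\rk_X(G)=\infty}$. The key fact to establish first is a closure property: if $F\in W$, then every prime extension $B$ of $F$ in $\cF$ has a realization $C$ in $X$ with $C\in W$. Once this is in hand, the forward direction is immediate. \textsf{II} uses the strategy ``always answer with a realization lying in $W$.'' Starting from $F_0=F\in W$, this keeps every position $F_n$ in $W$, so \textsf{II} never lacks a legal move and wins.

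To prove the closure property, fix $F\in W$ and a prime extension $B$. For each ordinal $\alpha$ we have $\rk_X(F)\ge\alpha+1$, so there is a realization $C_\alpha$ of $B$ in $X$ with $\rk_X(C_\alpha)\ge\alpha$. The realizations of $B$ in $X$ are finite subsets of the countable set $X$, so there are only countably many of them. A pigeonhole over the proper class (or over $\w_1$) of ordinals therefore gives a single realization $C$ with $\rk_X(C)\ge\alpha$ for unboundedly many $\alpha$. By the downward monotonicity noted after Definition~\ref{2:definition_rankFunction}, $\rk_X(C)\ge\alpha$ then holds for all $\alpha$, so $\rk_X(C)=\infty$. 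An alternative route avoids pigeonhole: use the fact that ranks lie in $\w_1\cup\{\infty\}$, take $\alpha=\w_1$, and observe that $\rk_X(C_{\w_1})\ge\w_1$ forces $\rk_X(C_{\w_1})=\infty$. The pigeonhole argument is the more self-contained of the two.

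For the converse, I argue by contrapositive. Suppose $\rk_X(F)=\alpha<\infty$. I show by induction on the ordinal $\alpha$ that \textsf{I} has a strategy guaranteeing a win from any position $G$ with $\rk_X(G)=\alpha$, i.e.\ forcing a round where \textsf{II} has no realization. Since $\rk_X(G)\not\ge\alpha+1$, there is a prime extension $B$ of $G$ whose every realization $C$ has $\rk_X(C)<\alpha$; \textsf{I} plays this $B$. Either \textsf{II} has no realization and loses immediately, or \textsf{II} moves to some $C$ of strictly smaller rank, and the induction hypothesis applies at $C$. Ordinals are well-founded, so \textsf{II} loses after finitely many rounds, and consequently \textsf{II} has no winning strategy.

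The main obstacle is the closure step, where one passes from ``$\rk_X(C_\alpha)\ge\alpha$ for each $\alpha$ with $C_\alpha$ depending on $\alpha$'' to a single $C$ of infinite rank. This is exactly where countability of $X$, and hence of the set of realizations, must be used. The remaining steps are routine unwinding of the definition.
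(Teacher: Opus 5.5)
Your proposal is correct and follows the paper's proof. In the forward direction \textsf{II} always answers with a realization of rank $\infty$; in the converse \textsf{I} keeps playing a prime extension all of whose realizations have strictly smaller rank, producing a strictly decreasing sequence of ordinals. The one difference is that you justify the closure step (a position of rank $\infty$ has, for every prime extension, a realization of rank $\infty$), which the paper only asserts. Your proper-class pigeonhole handles this correctly and needs only that the realizations of $B$ form a set, not that $X$ is countable. The $\w_1$ variant, by contrast, gives only $\rk_X(C)\geq\w_1$, so it must be combined with the fact that ranks lie in $\w_1\cup\{\infty\}$.
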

\begin{proof}
$(\Rightarrow)$: If $\rk_X(F)=\infty$, then every prime extension of $F$ has a realization $C$ in $X$ with $\rk_X(C)=\infty$. This gives \textsf{II} a strategy: at each round, respond with a realization of infinite rank. Since infinite rank is preserved at every step, the game never terminates.

$(\Leftarrow)$: Suppose $\rk_X(F)=\alpha<\infty$. Since $\rk_X(F)\not\geq\alpha+1$, some prime extension $B$ of $F$ has all realizations in $X$ of rank less than $\alpha$. \textsf{I} plays $B$, forcing $\rk_X(F_1)<\alpha$. Iterating, \textsf{I} produces a strictly decreasing sequence of ordinals, which must terminate.
\end{proof}

The rank game can be viewed as an asymmetric variant of the abstract Banach--Mazur game studied by Krawczyk and Kubi\'s~\cite{krawczyk-kubis-games} in the context of evolution systems~\cite{kubis-radecka, kubis-fraisse-sequences}. In that game, both players alternately choose concrete finite substructures from $\age(X)$ to form an increasing chain.

In contrast, the rank game is fundamentally asymmetric: Player~\textsf{I} proposes \emph{abstract} prime extensions from the class $\cF$, while Player~\textsf{II} must find \emph{concrete realizations} of those extensions inside the fixed structure $X$. The rank measures how many rounds \textsf{II} can survive this challenge.

It might seem that if $F, G \in \age(X)$ are isomorphic, then we would have $\rk_X(F) = \rk_X(G)$. However, this is clearly false; for example, if $X$ is a graph and $v, w \in X$ where $v$ is isolated and $w$ is not isolated nor connected to every other vertex, then $\rk_X(\{v\}) = 0 < \rk_X(\{w\})$, while $\{v\} \simeq \{w\}$.

On the other hand, if $h:X\to X$ is an automorphism of $X$ such that $h[F]=G$, then $\rk_X(F)=\rk_X(G)$. This follows by a straightforward induction: $h$ maps prime extensions of $F$ to prime extensions of $G$ and realizations in $X$ to realizations in $X$, preserving the rank at each level.

\begin{proposition}\label{rank of substructure geq than the rank of its prime extensions}
Assume $\cF$ has the amalgamation property. Let $X\in\sigma\cF$ and $\alpha$ an ordinal. If $F,G\in\age(X)$ are such that $G$ is a prime extension of $F$ and $\rk_X(G)\geq\alpha$, then $\rk_X(F)\geq\alpha$. In particular, $\rk_X(F)\geq\rk_X(G)$.
\end{proposition}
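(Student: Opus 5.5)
Proof by transfinite induction on $\alpha$, simultaneously for all pairs $F<G$ in $\age(X)$ with $G$ a prime extension of $F$. The base case $\alpha=0$ is trivial. For a limit $\alpha$ the statement follows immediately from the hypothesis for all $\beta<\alpha$: $\rk_X(G)\geq\alpha$ gives $\rk_X(G)\geq\beta$ for each $\beta<\alpha$, hence $\rk_X(F)\geq\beta$ for each $\beta<\alpha$, hence $\rk_X(F)\geq\alpha$. The final clause $\rk_X(F)\geq\rk_X(G)$ follows by applying the statement with $\alpha=\rk_X(G)$, or with every ordinal $\alpha$ if $\rk_X(G)=\infty$.

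The real work is the successor step $\alpha=\beta+1$. Assume $\rk_X(G)\geq\beta+1$ and write $G=F\cup\{w\}$. Let $B=F\cup\{b\}$ be an arbitrary prime extension of $F$ in $\cF$; we must find a realization $C$ of $B$ in $X$ with $\rk_X(C)\geq\beta$. Apply AP to the inclusions $F\leq G$ and $F\leq B$ to get $D\in\cF$ containing $G$ and a copy of $B$ over $F$. If in $D$ the image of $b$ coincides with $w$, then $G$ itself is a realization of $B$. In that case we use $\rk_X(G)\geq\beta+1\geq\beta$ and we are done. Otherwise, $D':=F\cup\{w,b\}$ (the substructure of $D$ on these points) is a prime extension of $G$ lying in $\cF$ by heredity. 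Since $\rk_X(G)\geq\beta+1$, there is a realization $E=G\cup\{c\}$ of $D'$ in $X$ with $\rk_X(E)\geq\beta$. Set $C:=F\cup\{c\}$. Since $E\simeq D'$ over $G$, hence over $F$, $C$ is a realization of $B$ in $X$.

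It remains to show $\rk_X(C)\geq\beta$. Note that $E$ is a prime extension of $C$ (it adds $w$), and $\rk_X(E)\geq\beta$. By the induction hypothesis at $\beta$, applied to the pair $C<E$, we get $\rk_X(C)\geq\beta$. This is exactly why the induction must be carried out for all pairs simultaneously rather than for the fixed $F,G$. Since $B$ was arbitrary, $\rk_X(F)\geq\beta+1$.

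The main obstacle is this successor step: choosing the amalgam so that the realization of the extension of $G$ restricts to a realization of $B$ over $F$, and then transferring the rank bound from $E$ down to $C$. The transfer is exactly the induction hypothesis, once it is stated uniformly over all pairs. One must also handle the degenerate amalgam where the new points are identified; we dispose of it directly using $G$, as above.
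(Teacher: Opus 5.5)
Your proposal is correct and follows the paper's proof essentially step for step. Like the paper, you run the induction on $\alpha$ uniformly over all pairs and amalgamate $G$ and $B$ over $F$; you handle the identified case with $G$ itself, and otherwise realize the prime extension $F\cup\{w,b'\}$ of $G$ and apply the induction hypothesis to $C<E$, with the limit case and the final clause handled the same way.
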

\begin{proof}
By induction on $\alpha$. The case $\alpha=0$ is straightforward, as everything has rank at least $0$.

\textbf{Successor case:} Assume the result holds for $\alpha$ and let us prove it for $\alpha+1$. Let $F,G\in\age(X)$ with $G=F\cup\{w\}$ a prime extension of $F$ and $\rk_X(G)\geq\alpha+1$. Let $B=F\cup\{y\}\in\cF$ be any prime extension of $F$; we seek a realization $C\in\age(X)$ of $B$ with $\rk_X(C)\geq\alpha$. By AP applied to $G$ and $B$ over $F$, there exists $H\in\cF$ with $G\leq H$ and an embedding $\iota:B\to H$ that is the identity on $F$; set $y':=\iota(y)\in H\setminus F$ (since $\iota$ is injective and the identity on $F$).
\\
\textit{Subcase 1: $y'\in G$.} Then $y'=w$ and $\iota(B)=G$, thus $G$ itself is a realization of $B$ in $X$ with $\rk_X(G)\geq\alpha+1\geq\alpha$.
\\
\textit{Subcase 2: $y'\in H\setminus G$.} Replacing $H$ by its induced substructure on $G\cup\{y'\}$ ---which lies in $\cF$ since $\cF$ is hereditary and still contains both $G$ and the image of $\iota$--- we may assume $H=G\cup\{y'\}$, a prime extension of $G$ in $\cF$. Since $\rk_X(G)\geq\alpha+1$, there is a realization $D\in\age(X)$ of $H$ with $\rk_X(D)\geq\alpha$ and an isomorphism $f:H\to D$ with $f\rest G=\text{id}_G$. Setting $z:=f(y')$, the composition $f\circ\iota:B\to D$ is an embedding which is the identity on $F$ and has image $F\cup\{z\}$, then $C:=F\cup\{z\}$ is a realization of $B$ in $X$. Since $D=C\cup\{w\}$ is a prime extension of $C$ in $\age(X)$ with $\rk_X(D)\geq\alpha$, the inductive hypothesis gives $\rk_X(C)\geq\alpha$.

\textbf{Limit case:} Suppose that $\beta$ is a limit ordinal, and the result holds for all $\alpha<\beta$. Let $F,G\in\age(X)$ be such that $G$ is a prime extension of $F$ and $\rk_X(G)\geq\beta$. In particular, $\rk_X(G)\geq\alpha$ for every $\alpha<\beta$. By the inductive hypothesis, $\rk_X(F)\geq\alpha$ for all $\alpha<\beta$, which implies $\rk_X(F)\geq\beta$.
\end{proof}

\begin{corollary}\label{rank function of a substructure is geq}
Assume $\cF$ has AP. If $F,G\in\age(X)$ are such that $F\leq G$, then $\rk_X(F)\geq\rk_X(G)$.
\end{corollary}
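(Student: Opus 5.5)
The plan is to induct on the size of the difference $G\setminus F$ and reduce everything to Proposition~\ref{rank of substructure geq than the rank of its prime extensions}. Both $F$ and $G$ lie in $\age(X)$, so they are finite; write $G\setminus F=\{w_1,\dots,w_k\}$ in any enumeration.

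Put $F_0:=F$ and $F_i:=F\cup\{w_1,\dots,w_i\}$ for $1\le i\le k$, each taken as the induced substructure of $G$ (equivalently of $X$) on that set. Then $F_k=G$, and every $F_i$ belongs to $\age(X)$ because it is a finite substructure of $X$. Also $F_{i-1}\le F_i$, and $F_i$ has exactly one point outside $F_{i-1}$, so $F_i$ is a prime extension of $F_{i-1}$.

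We have assumed $\cF$ has AP, so Proposition~\ref{rank of substructure geq than the rank of its prime extensions} applies at each step and gives $\rk_X(F_{i-1})\geq\rk_X(F_i)$ for $1\le i\le k$. Chaining these inequalities yields
\[
\rk_X(F)=\rk_X(F_0)\geq\rk_X(F_1)\geq\cdots\geq\rk_X(F_k)=\rk_X(G).
\]
The inequalities live in $\w_1\cup\{\infty\}$, which is linearly ordered with $\infty$ on top, so transitivity holds there. The case $F=G$, that is $k=0$, is trivial.

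The argument has no real obstacle. The one point to check is that the ``in particular'' clause of Proposition~\ref{rank of substructure geq than the rank of its prime extensions} covers the value $\infty$. It does: $\rk_X(G)=\infty$ means $\rk_X(G)\geq\alpha$ for every ordinal $\alpha$, and the proposition then gives $\rk_X(F)\geq\alpha$ for every $\alpha$, so $\rk_X(F)=\infty$ as well.
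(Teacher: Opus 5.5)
Your proposal is correct and follows the paper's proof exactly: build a chain $F=F_0\leq F_1\leq\cdots\leq F_k=G$ of one-point extensions inside $\age(X)$ and chain the inequalities from Proposition~\ref{rank of substructure geq than the rank of its prime extensions}. Your remark on the value $\infty$ is a small but correct detail that the paper leaves implicit.
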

\begin{proof}
We can construct a chain $(F_i)_{i\in n}$ of structures in $\age(X)$ such that $F_{i+1}$ is a prime extension of $F_i$ for all $i< n-1$, with $F_0=F$ and $F_{n-1}=G$. Therefore, by Proposition~\ref{rank of substructure geq than the rank of its prime extensions}, we obtain:
    $$\rk_X(F)=\rk_X(F_0)\geq\rk_X(F_1)\geq\dots\geq\rk_X(F_{n-1})=\rk_X(G).$$
\end{proof}

Applying Corollary~\ref{rank function of a substructure is geq} to the case $F=\emptyset$, we deduce the following:

\begin{corollary}\label{1.11}
Assume $\cF$ has AP. For every $X\in\sigma\cF$, we have $\rk(X)\geq\rk_X(G)$ for all $G\in\age(X)$.
\end{corollary}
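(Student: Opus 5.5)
This is immediate from Corollary~\ref{rank function of a substructure is geq}, and the plan is to specialize it to $F=\emptyset$. Fix $X\in\sigma\cF$ and $G\in\age(X)$. First I will check that the empty structure lies in $\age(X)$ and that $\emptyset\leq G$. Since $\cL$ is relational without constants, the empty set carries a unique $\cL$-structure. It is a finite substructure of $X$, and it is a substructure of $G$ because no tuples of elements from it exist to test. (Also $\emptyset\in\cF$ by heredity whenever $\cF$ is nonempty; $X\in\sigma\cF$ guarantees this.)

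With $F=\emptyset$, Corollary~\ref{rank function of a substructure is geq} gives $\rk_X(\emptyset)\geq\rk_X(G)$. By Definition~\ref{2:definition_rankFunction}, $\rk(X)=\rk_X(\emptyset)$, which is exactly the claim.

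The only step needing any care is the degenerate chain in the proof of the previous corollary. If $G=\emptyset$, the inequality is an equality and there is nothing to prove. If $|G|=n\geq 1$, the chain $\emptyset=F_0<F_1<\dots<F_n=G$ is obtained by adding one element of $G$ at a time. Each $F_i$ is an induced substructure of $G$, hence lies in $\age(X)$. Each step is a prime extension, so Proposition~\ref{rank of substructure geq than the rank of its prime extensions} applies at every step, using AP. I expect no real obstacle.
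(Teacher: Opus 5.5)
Your proposal is correct and matches the paper's argument: the paper obtains this corollary by applying Corollary~\ref{rank function of a substructure is geq} with $F=\emptyset$ and using $\rk(X)=\rk_X(\emptyset)$. Your extra checks, that $\emptyset\in\age(X)$ and that the chain of prime extensions is well formed, are valid but already covered by that corollary.
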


The amalgamation hypothesis cannot be dropped. Consider the class $\cF$ of finite sets with a unary predicate $u$ satisfying the axiom $(\exists x_0,x_1)(x_0\neq x_1\wedge u(x_0)\wedge u(x_1))\to(\forall y)(u(y))$. Let $X$ be an infinite set with $u(x)$ for all $x\in X$. Then for any $x_0,x_1\in X$, the extension $\{x_0,y\}$ with $\neg u(y)$ is in $\cF$ but unrealizable in $X$, giving $\rk_X(\{x_0\})=0$. On the other hand, every extension of $\{x_0,x_1\}$ in $\cF$ must satisfy $u(y)$, and all such extensions are realizable in $X$; iterating gives $\rk_X(\{x_0,x_1\})=\infty$.

\begin{lemma}\label{rank on substructures vs rank on the total}
Let $X \in \sigma\cF$, $Y \leq X$, $F \in \age(Y)$ and $\alpha \in \w_1$. If $\rk_Y(F)\geq \alpha$, then $\rk_X(F) \geq \alpha$. In particular, $\rk_Y(F) \leq \rk_X(F)$.
\end{lemma}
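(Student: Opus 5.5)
We argue by transfinite induction on $\alpha<\w_1$, proving simultaneously for all $F\in\age(Y)$ that $\rk_Y(F)\geq\alpha$ implies $\rk_X(F)\geq\alpha$. Two facts make this possible. Since $Y\leq X$, every finite substructure of $Y$ is a finite substructure of $X$, so $\age(Y)\subseteq\age(X)$ and $\rk_X(F)$ is defined whenever $\rk_Y(F)$ is. Moreover, the notion of a prime extension of $F$ in $\cF$ refers only to $F$ and $\cF$, not to the ambient structure, so $F$ has the same prime extensions whether we work in $Y$ or in $X$. No amalgamation hypothesis is needed, in contrast with Proposition~\ref{rank of substructure geq than the rank of its prime extensions}.

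The case $\alpha=0$ is trivial. For the successor step, assume the claim holds for $\alpha$ and let $\rk_Y(F)\geq\alpha+1$. Let $B$ be any prime extension of $F$ in $\cF$. By definition there is a realization $C\leq Y$ of $B$ with $\rk_Y(C)\geq\alpha$. Since $Y\leq X$, we have $C\leq X$, and the isomorphism $B\to C$ over $F$ is unchanged, so $C$ is also a realization of $B$ in $X$. The inductive hypothesis applied to $C\in\age(Y)$ gives $\rk_X(C)\geq\alpha$. As $B$ was arbitrary, $\rk_X(F)\geq\alpha+1$.

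For a limit ordinal $\alpha$, the hypothesis $\rk_Y(F)\geq\alpha$ gives $\rk_Y(F)\geq\beta$ for all $\beta<\alpha$. Hence $\rk_X(F)\geq\beta$ for all $\beta<\alpha$ by induction, and therefore $\rk_X(F)\geq\alpha$.

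For the final clause, if $\rk_Y(F)=\gamma<\w_1$, apply the result with $\alpha=\gamma$ to get $\rk_X(F)\geq\gamma$. If $\rk_Y(F)=\infty$, apply it with every countable $\alpha$, so $\rk_X(F)\geq\alpha$ for all $\alpha<\w_1$. Since ranks take values only in $\w_1\cup\{\infty\}$, this forces $\rk_X(F)=\infty$. The only delicate point is exactly this reliance on the fact that ranks never take uncountable ordinal values. One could avoid it by running the same induction over all ordinals, since nothing in the argument uses countability of $\alpha$.
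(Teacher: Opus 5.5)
Your proof is correct and follows essentially the same route as the paper: transfinite induction on $\alpha$, where a realization of a prime extension in $Y$ is also a realization in $X$ and the inductive hypothesis transfers its rank, with the limit case immediate. Your extra handling of the final clause, using that ranks take values only in $\w_1\cup\{\infty\}$ (or running the induction over all ordinals), is a harmless elaboration of what the paper leaves implicit.
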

\begin{proof}
We proceed by induction on $\alpha$. If $\alpha = 0$, the result is immediate. Now assume that the result holds for all $\beta < \alpha$, and let $F$ be such that $\rk_Y(F)\geq \alpha$.

If $\alpha = \beta + 1$, then since $\rk_Y(F) \geq \beta + 1$, for every prime extension $H$ of $F$, there exists a realization $B \in \age(Y)$ of $H$ with $\rk_Y(B) \geq \beta$. By the inductive hypothesis, $\rk_X(B) \geq \beta$, which establishes that $\rk_X(F) \geq \beta + 1 = \alpha$.

If $\alpha$ is a limit ordinal, then since $\rk_Y(F) \geq \beta$ for every $\beta < \alpha$, the inductive hypothesis implies $\rk_X(F) \geq \beta$ for all $\beta < \alpha$, giving $\rk_X(F) \geq \alpha$.
\end{proof}

\begin{corollary}\label{1.13}
If $Y\leq X$, then $\rk(Y)\leq\rk(X)$.
\end{corollary}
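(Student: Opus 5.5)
This is immediate from Lemma~\ref{rank on substructures vs rank on the total} applied with $F=\emptyset$. The plan is to check that the hypotheses hold and then split into the two possible values of $\rk(Y)$.

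First, $\emptyset\in\age(Y)$: the language has no constants, so the empty structure is a finite substructure of $Y$. Hence $\rk(Y)=\rk_Y(\emptyset)$ and $\rk(X)=\rk_X(\emptyset)$ are both defined. Note also that $Y\leq X$ with $X$ countable and $\cF$ hereditary puts $Y$ in $\sigma\cF$: $Y$ is the union of a chain of its finite substructures, and each of these lies in $\age(X)\subseteq\cF$.

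\textbf{Case $\rk(Y)=\alpha<\w_1$.} Lemma~\ref{rank on substructures vs rank on the total} with $F=\emptyset$ and this $\alpha$ gives $\rk_X(\emptyset)\geq\alpha$, so $\rk(X)\geq\rk(Y)$.

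\textbf{Case $\rk(Y)=\infty$.} Here $\rk_Y(\emptyset)\geq\alpha$ for every $\alpha<\w_1$. The lemma then gives $\rk_X(\emptyset)\geq\alpha$ for every $\alpha<\w_1$, and in particular $\rk_X(\emptyset)\geq\w_1$. Since the rank takes values only in $\w_1\cup\{\infty\}$, this forces $\rk(X)=\infty$.

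The only subtle point is this last step. The lemma is stated only for $\alpha\in\w_1$, so passing to $\infty$ uses the fact, noted after Definition~\ref{2:definition_rankFunction}, that ranks lie in $\w_1\cup\{\infty\}$. Alternatively, the inductive proof of the lemma never uses countability of $\alpha$, so the lemma holds for every ordinal $\alpha$ and the conclusion follows directly. No amalgamation hypothesis is needed anywhere in this argument.
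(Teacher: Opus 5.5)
Your proof is correct and follows the paper's route exactly: the paper states this corollary without proof as the $F=\emptyset$ instance of Lemma~\ref{rank on substructures vs rank on the total}, whose ``in particular'' clause $\rk_Y(F)\leq\rk_X(F)$ already covers the case $\rk(Y)=\infty$. Your extra remarks make explicit what the paper leaves implicit: that $Y\in\sigma\cF$, and that passing to $\infty$ uses either the fact that ranks lie in $\w_1\cup\{\infty\}$ or the observation that the lemma's proof works for arbitrary ordinals.
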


\begin{proposition}\label{intermediate values}
Let $X\in \sigma \mathcal{F}$ and $F\in \age(X)$ such that $\rk_X (F)=\alpha$ for some ordinal $\alpha<\infty$. If $\beta <\alpha$ then there is $E\in \age(X)$ such that $\rk_X (E)=\beta$.
\end{proposition}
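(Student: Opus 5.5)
We argue by transfinite induction on $\alpha$, proving simultaneously for all $F\in\age(X)$ that if $\rk_X(F)=\alpha<\infty$ and $\beta<\alpha$, then some $E\in\age(X)$ has $\rk_X(E)=\beta$. Note that no amalgamation hypothesis is available here, so we must avoid Corollary~\ref{rank function of a substructure is geq}. We work only with the definition, using prime extensions of $F$ and their realizations.

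The key step is the following \emph{descent claim}. If $\rk_X(F)=\alpha$ with $0<\alpha<\infty$, then for every $\gamma<\alpha$ there is a realization $C$ of some prime extension of $F$ with $\gamma\le\rk_X(C)<\alpha$.

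To prove it, first use $\rk_X(F)\not\geq\alpha+1$ to pick a prime extension $B$ of $F$ such that every realization $C$ of $B$ in $X$ has $\rk_X(C)<\alpha$. Since $\alpha>0$, we have $\rk_X(F)\ge\gamma+1$ for each $\gamma<\alpha$: for successor $\alpha$ take $\gamma+1\le\alpha$, and for limit $\alpha$ note $\gamma+1<\alpha$. Hence $B$ has a realization $C$ with $\rk_X(C)\ge\gamma$, and by the choice of $B$ also $\rk_X(C)<\alpha$. Note that the same $B$ works for every $\gamma$.

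Now fix $\beta<\alpha$ and apply the claim with $\gamma=\beta$. This gives a realization $C$ with $\beta\le\rk_X(C)=:\delta<\alpha$. If $\delta=\beta$, take $E=C$. Otherwise $\beta<\delta<\alpha$ and $\delta<\infty$, so the induction hypothesis applied to $C$ and $\delta$ yields $E$ with $\rk_X(E)=\beta$. This handles the successor and limit cases of $\alpha$ uniformly. The case $\alpha=0$ is vacuous.

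The main obstacle is the descent claim when $\alpha$ is a limit ordinal. There, $\rk_X(F)\ge\alpha$ does not directly provide realizations of rank in a prescribed range; it only gives $\rk_X(F)\ge\gamma+1$ for each $\gamma<\alpha$. The fix is to choose the ``bad'' prime extension $B$ witnessing $\rk_X(F)\not\ge\alpha+1$, which caps every realization of $B$ below $\alpha$, and then to use $\rk_X(F)\ge\gamma+1$ to get realizations of $B$ of rank at least $\gamma$. One should also check the claim when realizations have rank exactly equal to $\gamma$. That case is fine and is handled by the case distinction $\delta=\beta$ above.
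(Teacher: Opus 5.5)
Your proof is correct and follows essentially the paper's argument. Like the paper, you take the prime extension $B$ witnessing $\rk_X(F)\not\geq\alpha+1$, so that every realization of $B$ has rank below $\alpha$, and use $\rk_X(F)\geq\beta+1$ to obtain a realization of rank in $[\beta,\alpha)$, then apply the induction hypothesis. The only difference is that you treat the successor and limit cases in one uniform step, whereas the paper handles them separately.
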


\begin{proof}
By induction on $\alpha$. The case $\alpha=0$ is vacuous. 

\textit{Case $\alpha=\gamma+1$:} it suffices to find $E\in\age(X)$ with $\rk_X(E)=\gamma$, then apply the induction hypothesis if $\beta<\gamma$. Since $\rk_X(F)\geq\gamma+1$, every prime extension of $F$ has a realization in $X$ with rank at least $\gamma$. Since $\rk_X(F)\not\geq\gamma+2$, there exists a prime extension $B$ such that every realization of $B$ has rank at most  $\gamma$. For this $B$, combining both facts gives a realization $E$ with $\rk_X(E)=\gamma$.

\textit{Case $\alpha$ limit:} since $\beta<\alpha$ and $\alpha$ is a limit, $\beta+2<\alpha$, then $\rk_X(F)>\beta+2$. Hence, every prime extension of $F$ has a realization with rank at least $\beta+1$. On the other hand, since $\rk_X(F)\not\geq\alpha+1$, there exists a prime extension $B_0$ such that every realization of $B_0$ in $X$ has rank strictly below $\alpha$. For this $B_0$, the two facts combine: there exists a realization $E$ with $\beta<\rk_X(E)<\alpha$. Apply the induction hypothesis to $E$ and $\rk_X(E)$ to obtain a structure of rank exactly $\beta$.
\end{proof}

\section{Rank and the amalgamation property}\label{sec rank and AP}

We now explore the relationship between the rank function and the amalgamation property. Assume $\cF$ as in the previous section. The main results are: the rank controls embeddability of finite structures, $\rk(X)\geq\w$ characterizes Fra\"iss\'e classes and full ages, $\rk(X)=\infty$ characterizes embeddability of the Fra\"iss\'e limit, and the Rank Property implies that $\cF$ is Fra\"iss\'e. The converse does not hold: being Fra\"iss\'e does not suffice for RP, as the example of finite sets shows.

Let $A, A',B$ be structures such that $A\leq B$ and $A$ is isomorphic to $A'$. Denote $B(A,A')$ to the structure isomorphic to $B$ resulted in changing the elements of $A$ in $B$ by the respective elements in $A'$.

\begin{lemma}\label{lemma:1}
Let $X\in \sigma\mathcal{F}$, $A,B\in \mathcal{F}$ and $l\in \omega$, such that $\rk(X)\geq l$ and there is an embedding from $A$ to $B$. Assume that $|A|\leq |B|\leq l$ and there is some $A'\in \age(X)$ isomorphic to $A$ with $\rk_X(A')\geq l-|A|$. Then, there exists $B'$ realization of $B(A,A')$ in $X$ extending $A'$ such that $\rk_X(B')\geq l-|B|$.
\end{lemma}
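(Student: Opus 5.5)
We prove the statement by induction on $k:=|B|-|A|$, adding the points of $B\setminus A$ one at a time and spending one unit of rank at each step. The lemma makes no amalgamation assumption, and none is needed.

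Before starting, we fix the transport of structure. Identify $A$ with its image under the given embedding into $B$, so that $A\leq B$. Then $B(A,A')$ is a structure containing $A'$ as a substructure; it lies in $\cF$ because it is isomorphic to $B$ and $\cF$ is closed under isomorphisms. Enumerate $B\setminus A=\{b_0,\dots,b_{k-1}\}$ and put $B_i:=A\cup\{b_0,\dots,b_{i-1}\}$. Each $B_i$ is in $\cF$ because $\cF$ is hereditary, and $B_{i+1}$ is a prime extension of $B_i$. The transported chain $B_i(A,A')$ consists of structures extending $A'$, with each one a prime extension of the previous.

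The induction hypothesis at stage $i\le k$ is the following: there are a realization $C_i\in\age(X)$ of $B_i(A,A')$ over $A'$, together with an isomorphism $f_i:B_i(A,A')\to C_i$ fixing $A'$ pointwise, such that $\rk_X(C_i)\geq l-|A|-i=l-|B_i|$.

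\emph{Base case $i=0$.} Take $C_0=A'$, which has the required rank by hypothesis.

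\emph{Inductive step.} Suppose $i<k$. Since $|B_{i+1}|\le|B|\le l$, we have $l-|B_i|\geq 1$. Hence $\rk_X(C_i)\geq (l-|B_{i+1}|)+1$, where $l-|B_{i+1}|$ is a natural number, so the successor clause of Definition~\ref{2:definition_rankFunction} applies. Push the prime extension $B_{i+1}(A,A')$ of $B_i(A,A')$ forward along $f_i$. This yields a prime extension $E$ of $C_i$ in $\cF$, namely the structure on $C_i\cup\{y\}$ isomorphic to $B_{i+1}(A,A')$ via an extension of $f_i$. By the successor clause, $E$ has a realization $C_{i+1}$ in $X$ with $\rk_X(C_{i+1})\geq l-|B_{i+1}|$. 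Composing the isomorphisms gives $f_{i+1}:B_{i+1}(A,A')\to C_{i+1}$ extending $f_i$, and in particular fixing $A'$.

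At $i=k$ we set $B':=C_k$. This is a realization of $B(A,A')$ in $X$ extending $A'$, and $\rk_X(B')\geq l-|B|$, as required.

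The only delicate point is bookkeeping: one must check that the successor clause is applicable at every step, which is exactly what the hypothesis $|B|\le l$ secures by keeping all the ranks involved nonnegative integers. The hypothesis $\rk(X)\ge l$ is not used in this argument; it is presumably there for the later application where $A'=\emptyset$.
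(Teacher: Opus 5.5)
Your proof is correct and is essentially the paper's argument: induction on $|B|-|A|$, adding one point of $B\setminus A$ at a time through the successor clause of the rank definition. The only difference is ordering. The paper realizes one new point and then applies the inductive hypothesis to the remaining extension, while you build the chain forward and track the isomorphisms explicitly. You are also right that neither AP nor the hypothesis $\rk(X)\geq l$ is actually used.
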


\begin{proof}
We prove it by induction on $n-m$, where $n=|B|$ and $m=|A|$. Without loss of generality $A\subseteq B$. If $n-m=0$, $A'$ witnesses the statement. Consider $n-m=k+1$. Let $x\in B\setminus A$ and $C=A\cup \set{x}$. By definition of the rank function, we find $C'$ a realization of $C(A,A')$ in $X$ extending $A'$ such that $\rk_X(C')\geq l-(m+1)$. Now apply the inductive hypothesis for $C'$ and $B$ to find $B'$ realization of $B(C,C')$ extending $C'$ (and hence $A'$) such that $\rk_X(B')\geq l-n$.
\end{proof}

Applying Lemma \ref{lemma:1} with $A= \emptyset$ we get the following:

\begin{corollary}\label{rk geq n embeds all}
Let $X\in\sigma\cF$ with $\rk(X)\geq l$. Then every $B\in\cF$ with $|B|\leq l$ can be embedded into $X$.
\end{corollary}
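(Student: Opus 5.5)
This is the special case $A=\emptyset$ of Lemma~\ref{lemma:1}, so I will verify that lemma's hypotheses with $A=A'=\emptyset$ and then read off the conclusion.

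Fix $B\in\cF$ with $|B|\leq l$. Take $A=\emptyset$, the empty structure. Since $\cL$ has no constants and $\cF$ is hereditary, $\emptyset\in\cF$. It embeds trivially into $B$, and $|A|=0\leq|B|\leq l$. For $A'$ take $\emptyset\in\age(X)$, which is isomorphic to $A$. Its rank is
\[
\rk_X(\emptyset)=\rk(X)\geq l=l-|A|,
\]
by the definition of $\rk(X)$ and the hypothesis. So every assumption of Lemma~\ref{lemma:1} holds.

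The lemma then gives a realization $B'$ of $B(\emptyset,\emptyset)=B$ in $X$ with $\rk_X(B')\geq l-|B|$. A realization of $B$ over $\emptyset$ is a substructure of $X$ isomorphic to $B$, so $B\embed X$.

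The only step needing care is the boundary case: that $\emptyset$ is a legitimate structure in $\cF$ and in $\age(X)$, and that $\rk(X)$ is by definition $\rk_X(\emptyset)$. Both are part of the paper's setup, since $\rk(X):=\rk_X(\emptyset)$ and the language has no constants.

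Alternatively, one can argue directly by induction on $|B|$. Build a chain $\emptyset=B_0<B_1<\dots<B_n=B$ of prime extensions. Then use the definition of $\rk_X(F)\geq\alpha+1$ repeatedly to find realizations $C_i$ of $B_i$ in $X$, each extending the previous one, with $\rk_X(C_i)\geq l-i$. This is exactly the inductive argument inside Lemma~\ref{lemma:1}.
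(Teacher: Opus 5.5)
Your proposal is correct and is the paper's own argument: the paper obtains this corollary by applying Lemma~\ref{lemma:1} with $A=\emptyset$. Your check of the hypotheses is right, using $A'=\emptyset$ and $\rk_X(\emptyset)=\rk(X)\geq l$. Your alternative direct induction is that lemma's proof written out, so it is the same argument.
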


The converse of Corollary~\ref{rk geq n embeds all} does not hold in general. In the class of finite graphs, the converse holds for $n\leq 2$ but fails for $n=3$: the graph on $5$ vertices $\{1,2,3,4,5\}$ with edges $1$-$2$, $1$-$3$, $2$-$3$, $1$-$4$ contains all graphs on $3$ vertices as induced subgraphs, yet has rank $2$. Moreover, $5$ vertices is optimal: in any graph on $4$ vertices containing $K_3$, three vertices form a triangle, and then every triple includes at least two of them, then $\overline{K_3}$ cannot be embedded. In the class of finite tournaments (see Section~\ref{tournaments section}), the failure already occurs at $n=2$: the unique tournament on $2$ vertices contains every tournament of size $\leq 2$, but has rank $1$. In the class of finite linear orders, the failure is even more dramatic: any finite linear order of size $m$ contains all linear orders of size $\leq m$, yet its rank is only $\lfloor\log_2(m+1)\rfloor$ (Theorem~\ref{rank of finite linear orders}).

In contrast, for classes defined by unary predicates alone (with no relations of arity $\geq 2$), the converse of Corollary~\ref{rk geq n embeds all} does hold. If $\cF$ is the class of finite structures in a language of $k$ unary predicates, then $\rk_X(F)\geq n$ if and only if each of the $2^k$ color patterns has at least $n$ representatives in $X\setminus F$, which is also the condition for embedding all structures of size $\leq n$. These classes are Fra\"iss\'e but fail RP: every infinite $X$ with $\age(X)=\cF$ has $\rk(X)=\infty$, then the image of $\rk$ on $\sigma\cF$ is $\w\cup\{\infty\}$. This suggests that relations of arity $\geq 2$ are essential for the Rank Property.

We now turn to the connection between rank and the amalgamation property. The key observation is that sufficiently high rank forces $\cF$ to be Fra\"iss\'e.

\begin{theorem}\label{rk geq omega implies AP}
There is $X\in\sigma\cF$ such that $\rk(X)\geq\w$ if and only if $\cF$ is a Fra\"iss\'e class.
\end{theorem}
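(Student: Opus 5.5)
The plan is to prove the two directions separately. The backward direction comes from the extension property of the Fra\"iss\'e limit. The forward direction needs only the finite-level consequences of $\rk(X)\geq\w$ recorded in Lemma~\ref{lemma:1} and Corollary~\ref{rk geq n embeds all}.

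\emph{($\Leftarrow$)} Assume $\cF$ is Fra\"iss\'e and let $X$ be its Fra\"iss\'e limit. A standard consequence of universality plus ultrahomogeneity is the extension property: for every $F\in\age(X)$ and every prime extension $B$ of $F$ in $\cF$, $B$ has a realization in $X$. Indeed, embed $B$ into $X$ by universality and move the image of $F$ back onto $F$ by an automorphism. By induction on $\alpha$ I will show that $\rk_X(F)\geq\alpha$ for every $F\in\age(X)$ and every ordinal $\alpha$. The successor step is exactly the extension property together with the inductive hypothesis applied to the realization. The limit step is immediate from the definition. Hence $\rk(X)=\infty\geq\w$.

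\emph{($\Rightarrow$)} Assume $\rk(X)\geq\w$, so $\rk(X)\geq l$ for every $l\in\w$. First note that $\age(X)\subseteq\cF$. This holds because $X$ is a union of a chain in $\cF$: every finite substructure lies inside some member of the chain, and $\cF$ is hereditary. It therefore suffices to produce all witnesses inside $X$.

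For JEP, given $A,B\in\cF$, take $l=|A|+|B|$. By Corollary~\ref{rk geq n embeds all}, both $A$ and $B$ embed into $X$. The finite substructure of $X$ on the union of the two images lies in $\age(X)\subseteq\cF$ and jointly embeds them.

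For AP, let $A\leq B$ and $A\leq C$ in $\cF$, and put $l=\max(|B|,|C|)$.
\begin{enumerate}
\item Apply Lemma~\ref{lemma:1} with base $\emptyset$ and target $A$. This gives $A'\in\age(X)$, $A'\simeq A$, with $\rk_X(A')\geq l-|A|$.
\item Apply Lemma~\ref{lemma:1} twice with this same $A'$, once with target $B$ and once with target $C$. This gives $B',C'\in\age(X)$ extending $A'$ that are realizations of $B(A,A')$ and $C(A,A')$ respectively.
\item Let $D$ be the substructure of $X$ on $B'\cup C'$. Then $D\in\cF$.
\item The maps $B\to B'\leq D$ and $C\to C'\leq D$ both restrict on $A$ to the fixed isomorphism $A\to A'$, so the diagram commutes.
\end{enumerate}
Amalgamation does not require $B'\cap C'=A'$, so any overlap is harmless.

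I expect the main obstacle to be a tempting detour rather than a hard step. The natural first attempt is to build $C'$ on top of $B'$ via further prime extensions. That fails, because the prime extensions available at $B'$ are extensions of $B'$ itself, and prescribing their restriction to $A'$ already presupposes amalgamation. The fix is to realize $B$ and $C$ independently over a common $A'$ of sufficiently high rank, and to let heredity of $\cF$, together with $\age(X)\subseteq\cF$, supply the amalgam.
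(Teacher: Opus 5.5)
Your proof is correct and follows the paper's argument. For $(\Leftarrow)$ the Fra\"iss\'e limit has rank $\infty$. For AP you realize $A$ at high rank via Lemma~\ref{lemma:1}, realize $B$ and $C$ independently over the same $A'$, and take the substructure on $B'\cup C'$ as the amalgam; your $l=\max(|B|,|C|)$ without the paper's $+1$ suffices, since Lemma~\ref{lemma:1} only needs $|B|,|C|\le l$. The only cosmetic difference is that you obtain JEP directly from Corollary~\ref{rk geq n embeds all}, whereas the paper applies AP over the empty structure.
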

\begin{proof}
$(\Leftarrow)$: If $\cF$ is a Fra\"iss\'e class, the Fra\"iss\'e limit has rank $\infty\geq\w$.

$(\Rightarrow)$: Let $X\in\sigma\cF$ with $\rk(X)\geq\w$. For AP: given $A,B,C\in\cF$ with $A\embed B$ and $A\embed C$, apply Lemma~\ref{lemma:1} to $\emptyset$ and $A$ to obtain $A'\in\age(X)$ isomorphic to $A$ with $\rk_X(A')\geq l-|A|$, where $l=\max\{|B|,|C|\}+1$; then apply Lemma~\ref{lemma:1} to $A',B$ and $A',C$ respectively, obtaining $B',C'\in\age(X)$ realizations of $B(A,A')$ and $C(A,A')$ extending $A'$. Then $B'\cup C'\in\age(X)\subseteq\cF$ witnesses AP. For JEP: since $\cL$ has no constants and all relations have arity $\geq 1$, the empty structure is an $\cL$-structure and lies in $\cF$ by heredity; applying AP to $\emptyset\embed A,B$ for any $A,B\in\cF$ yields a common extension.
\end{proof}

\begin{corollary}\label{RP implies Fraisse}
If $\cF$ has the Rank Property, then $\cF$ is a Fra\"iss\'e class.
\end{corollary}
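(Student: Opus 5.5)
The plan is to read off the corollary directly from Theorem~\ref{rk geq omega implies AP} by choosing a single suitable ordinal. By definition, the Rank Property for $\cF$ says that for every $\alpha<\w_1$ there is some $X\in\sigma\cF$ with $\rk(X)=\alpha$. I will apply this with $\alpha=\w$, which is a countable ordinal, and obtain $X\in\sigma\cF$ with $\rk(X)=\w$, so in particular $\rk(X)\geq\w$. Any countable $\alpha\geq\w$ would serve equally well.

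Next I invoke the direction $(\Rightarrow)$ of Theorem~\ref{rk geq omega implies AP}: the existence of $X\in\sigma\cF$ with $\rk(X)\geq\w$ implies that $\cF$ is a Fra\"iss\'e class. Concretely, AP comes from Lemma~\ref{lemma:1}, by realizing $B$ and $C$ over a common copy $A'$ of $A$ inside $X$. JEP then follows from AP over the empty structure, which lies in $\cF$ because $\cF$ is hereditary and $\cL$ has no constants.

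There is no substantial obstacle here; the only point to check is that $\w$ is a legitimate value of $\alpha$ in the definition of RP, that is, $\w<\w_1$. It is also worth noting that the standing assumptions on $\cF$ (countable up to isomorphism, hereditary, closed under isomorphism) are exactly those under which Theorem~\ref{rk geq omega implies AP} was proved, so the theorem applies without modification.
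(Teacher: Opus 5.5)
Your proof is correct and is the same as the paper's: use RP to get some $X\in\sigma\cF$ with $\rk(X)\geq\w$ (you take $\alpha=\w$ explicitly), then apply the $(\Rightarrow)$ direction of Theorem~\ref{rk geq omega implies AP}. Your recap of how that theorem obtains AP (via Lemma~\ref{lemma:1}) and JEP (via amalgamation over the empty structure) also matches the paper.
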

\begin{proof}
By RP, there exists $X\in\sigma\cF$ with $\rk(X)\geq\w$, then $\cF$ is Fra\"iss\'e by Theorem~\ref{rk geq omega implies AP}.
\end{proof}

The converse of Corollary~\ref{RP implies Fraisse} does not hold. As observed after Corollary~\ref{rk geq n embeds all}, classes defined by unary predicates alone are Fra\"iss\'e but fail RP; the simplest instance is the class of finite sets with the empty language.

\begin{proposition}\label{rk geq omega iff age}
Suppose that for every $n\in\w$ there exists $H\in\cF$ with $\rk(H)\geq n$.\footnote{This hypothesis is verified for each class studied in Sections~\ref{FAP and FEP section}--\ref{tournaments section}.} Then for every $X\in\sigma\cF$,
$$\rk(X)\geq\w\quad\text{if and only if}\quad\age(X)=\cF.$$
\end{proposition}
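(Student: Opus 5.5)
The forward direction uses Corollary~\ref{rk geq n embeds all}; the backward direction combines the hypothesis with the monotonicity results of Section~\ref{sec rank}.

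\textbf{($\Rightarrow$).} Suppose $\rk(X)\geq\w$. Then $\rk(X)\geq l$ for every $l\in\w$. By Corollary~\ref{rk geq n embeds all}, every $B\in\cF$ with $|B|\leq l$ embeds into $X$. Letting $l$ vary gives $\cF\subseteq\age(X)$. The reverse inclusion $\age(X)\subseteq\cF$ holds because $X\in\sigma\cF$ and $\cF$ is hereditary. Hence $\age(X)=\cF$.

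\textbf{($\Leftarrow$).} Suppose $\age(X)=\cF$ and fix $n\in\w$. By hypothesis there is $H\in\cF$ with $\rk(H)\geq n$; here $H$ is viewed as a finite member of $\sigma\cF$. Since $\age(X)=\cF$, there is $H'\leq X$ with $H'\simeq H$.

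Rank is an isomorphism invariant of the ambient structure: an isomorphism $H\to H'$ carries prime extensions and realizations to prime extensions and realizations. This is the same induction as the automorphism remark in Section~\ref{sec rank}. So $\rk(H')=\rk(H)\geq n$.

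Applying Corollary~\ref{1.13} to $H'\leq X$ gives $\rk(X)\geq\rk(H')\geq n$. As $n$ was arbitrary, the limit clause of Definition~\ref{2:definition_rankFunction} yields $\rk(X)\geq\w$.

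\textbf{Main point to check.} The key step is that the monotonicity of Lemma~\ref{rank on substructures vs rank on the total} transfers the rank of the finite witness $H$ into $X$. That lemma does not use AP, so no amalgamation hypothesis is needed in this direction.

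The only remaining care is the isomorphism-invariance of the rank across distinct structures $H\simeq H'$. I expect this to be routine: one checks by induction on $\alpha$ that $\rk_H(F)\geq\alpha$ implies $\rk_{H'}(f[F])\geq\alpha$ for an isomorphism $f:H\to H'$.
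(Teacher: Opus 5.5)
Your proof is correct and follows the paper's own argument. The forward direction uses Corollary~\ref{rk geq n embeds all}; the converse embeds each witness $H$ into $X$ and applies Corollary~\ref{1.13}. The only difference is that you make explicit two points the paper leaves implicit by identifying $H_n$ with its copy in $X$: the isomorphism invariance of the rank, and the inclusion $\age(X)\subseteq\cF$ by heredity.
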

\begin{proof}
$(\Rightarrow)$: By Corollary~\ref{rk geq n embeds all}, $\rk(X)\geq n$ implies that every $A\in\cF$ with $|A|\leq n$ embeds into $X$. Since this holds for all $n$, $\age(X)=\cF$.

$(\Leftarrow)$: By hypothesis, for each $n$, there exists $H_n\in\cF$ with $\rk(H_n)\geq n$. Since $\age(X)=\cF$, we have $H_n\in\age(X)$; therefore, $H_n\leq X$ and $\rk(X)\geq\rk(H_n)\geq n$ by Corollary~\ref{1.13}. Since this holds for all $n$, $\rk(X)\geq\w$.
\end{proof}

Finally, as mentioned in the introduction, the rank function characterizes when a countable structure contains the Fra\"iss\'e limit. We include the short proof for completeness.

\begin{proposition}[Kubi\'s--Shelah {\cite{kubis-shelah}}]\label{rk infty iff universal}
For every $X\in\sigma\cF$, $\rk(X)=\infty$ if and only if $\cF$ is a Fra\"iss\'e class and its Fra\"iss\'e limit embeds in $X$.
\end{proposition}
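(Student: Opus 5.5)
For the forward direction, suppose $\rk(X)=\infty$. Then $\rk(X)\geq\w$, so $\cF$ is a Fra\"iss\'e class by Theorem~\ref{rk geq omega implies AP}. Let $M$ be its Fra\"iss\'e limit, and enumerate $M=\{m_0,m_1,\dots\}$. Put $M_n=\{m_0,\dots,m_{n-1}\}$ with the induced structure. We build embeddings $f_n:M_n\to X$ recursively so that $f_{n+1}$ extends $f_n$ and $\rk_X(f_n[M_n])=\infty$ for every $n$.

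Start with $f_0=\emptyset$; this works since $\rk_X(\emptyset)=\rk(X)=\infty$. For the inductive step, let $F=f_n[M_n]$. Transport $M_{n+1}$ along $f_n$ to obtain a prime extension $B=F\cup\{y\}\in\cF$ of $F$; this is possible because $\cF$ is closed under isomorphism. Since $\rk_X(F)=\infty$, in particular $\rk_X(F)\geq\alpha+1$ for every ordinal $\alpha$. So for each $\alpha<\w_1$ there is a realization $C_\alpha$ of $B$ in $X$ with $\rk_X(C_\alpha)\geq\alpha$.

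The key point is getting a single realization of rank $\infty$. There are only countably many realizations, since $X$ is countable. Hence some fixed $C$ equals $C_\alpha$ for uncountably many $\alpha$, and so $\rk_X(C)\geq\alpha$ for cofinally many $\alpha<\w_1$. Because the rank takes values only in $\w_1\cup\{\infty\}$, this forces $\rk_X(C)=\infty$. This is the same argument as in Proposition~\ref{rank via game}, and it is essentially the only delicate point. Define $f_{n+1}$ to send $m_n$ to the new point of $C$. Finally, $\bigcup_n f_n$ is an embedding of $M$ into $X$.

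For the converse, suppose $\cF$ is Fra\"iss\'e and $M\embed X$. By Corollary~\ref{1.13} it suffices to prove $\rk(M)=\infty$. Show by induction on $\alpha$ that $\rk_M(F)\geq\alpha$ for every $F\in\age(M)$. The limit case is immediate. For the successor case, let $B=F\cup\{y\}\in\cF$ be a prime extension of $F$. By the extension property of $M$, which follows from ultrahomogeneity together with $\age(M)=\cF$, there is a realization $C$ of $B$ in $M$ over $F$. The inductive hypothesis gives $\rk_M(C)\geq\alpha$, hence $\rk_M(F)\geq\alpha+1$. Taking $F=\emptyset$ gives $\rk(M)=\infty$, and therefore $\rk(X)=\infty$.
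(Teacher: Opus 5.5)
Your proof is correct and follows the paper's argument. The forward direction uses Theorem~\ref{rk geq omega implies AP} and then builds a chain of rank-$\infty$ realizations that copies the Fra\"iss\'e limit into $X$; the converse combines $\rk(M)=\infty$ (from the extension property) with Corollary~\ref{1.13}. Your counting argument over the countably many realizations justifies the step ``$\rk_X(F)=\infty$ implies some realization of each prime extension has rank $\infty$'', which the paper only asserts, and your enumeration-based embedding of $M$ is a slightly cleaner version of the paper's chain construction.
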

\begin{proof}
$(\Leftarrow)$: If $\cF$ is Fra\"iss\'e with limit $U$ and $U\embed X$, then $\rk(X)\geq\rk(U)$ by Corollary~\ref{1.13}; and $\rk(U)=\infty$ since by ultrahomogeneity every prime extension of every $F\in\age(U)$ is realized in $U$.

$(\Rightarrow)$: If $\rk(X)=\infty$, then $\rk(X)\geq\w$, thus by Theorem~\ref{rk geq omega implies AP} $\cF$ is a Fra\"iss\'e class; let $U$ be its limit. Observe that, for $F\in age(X)$,  $\rk_X(F)=\infty$ if and only if every prime extension $B$ of $F$ has a realization $B'$ in $X$ with $\rk_X(B')=\infty$. This allows us to construct a chain $\emptyset=F_0\leq F_1\leq F_2\leq\cdots$ in $X$ with $\rk_X(F_n)=\infty$ for all $n$, choosing at each step a realization of any prime extension is needed to build $U$. Therefore, $\bigcup_{n<\w}F_n$ is isomorphic to $U$.
\end{proof}

The results of this section form a hierarchy of universality controlled by the rank: $\rk(X)\geq n$ implies that every structure in $\cF$ of size at most $n$ embeds into $X$ (Corollary~\ref{rk geq n embeds all}); $\rk(X)\geq\w$ is equivalent to $\age(X)=\cF$ and forces $\cF$ to be Fra\"iss\'e (Theorem~\ref{rk geq omega implies AP}, Proposition~\ref{rk geq omega iff age}); and $\rk(X)=\infty$ is equivalent to $\cF$ being Fra\"iss\'e and its limit embedding into $X$ (Proposition~\ref{rk infty iff universal}). The countable ordinal values between $\w$ and $\infty$ thus provide a fine measure of how far a structure is from containing the Fra\"iss\'e limit.

The amalgamation property enters Section~\ref{sec rank} only once: in the proof of Proposition~\ref{rank of substructure geq than the rank of its prime extensions}, which amalgamates an abstract prime extension $B$ of $F$ with the realized prime extension $G$ to obtain a common extension in $\cF$. Once that proposition is established, no later proof reconstructs this kind of amalgamation step: each of the three settings developed in the remaining sections encodes prime extensions concretely --- $\cF$-good types under FEP in Section~\ref{FAP and FEP section}, subsets $S\subseteq F$ for tournaments in Section~\ref{tournaments section}, and intervals induced by $F$ in $Y$ for linear orders in Section~\ref{sec linear orders} --- then any extension exhibited in a proof already belongs to $\cF$ by construction, and Proposition~\ref{rank of substructure geq than the rank of its prime extensions} is invoked only as a black box.

\section{The Rank Property under FAP and FEP}\label{FAP and FEP section}

In this section we prove the Rank Property for classes satisfying the free amalgamation property and the full extension property. The definitions and constructions are stated for a finite arbitrary relational language, but the reader may find it helpful to keep in mind the class of finite graphs as the guiding example throughout: there, a good type of $G$ (see definitions \ref{def de type} and \ref{def de tipos de tipos}) is simply a choice of neighborhood $A\subseteq G$ for the new vertex, and the full extension adds a vertex adjacent to every element of $G$. The general setup below abstracts this picture to accommodate higher-arity relations and unary predicates.

Let $\mathcal L$ be a language with finitely many (at least one) relational symbols, each of finite arity. More precisely, write
\[
\mathcal L=(R_i,S_j \mid i\in N,\; j\in M),
\]
where $N$ and $M$ are finite index sets, every $S_j$ is a unary relation symbol (arity $1$), and $c_i\geq 2$ denotes the arity of $R_i$ for each $i\in N$. We assume that $\mathcal{F}$, the class of finite $\mathcal{L}$-structures, satisfies the free amalgamation property and the irreflexivity axiom: for every $i\in N$, if $(x_0,\dots, x_m)\in R_i$ and $l\neq n$, then $x_l\neq x_n$.

\begin{mydef}\label{def de type}
Let $G\in\cF$. A \emph{type of $G$} is a pair $T=(A,s)$ such that:
\begin{enumerate}
  \item $A=(A_i^j \mid i\in N,\ j<c_i)$, where for every $i\in N$ and every $j<c_i$ we have $A_i^j\subseteq G^{c_i-1}$, and
  \item $s\in 2^M$.
\end{enumerate}
Given a type $T=(A,s)$ of $G$, the \emph{prime extension of $G$ encoded by $T$} is the (unique up to isomorphism) prime extension $H=G\cup\{z\}$ characterized by:
\begin{enumerate}
  \item For every $i\in N$, every $j<c_i$, and every tuple $(b_0,\dots,b_{c_i-2})\in G^{c_i-1}$,
  \[
    (b_0,\dots,b_{j-1},z,b_j,\dots,b_{c_i-2})\in R_i(H)
    \quad\Longleftrightarrow\quad
    (b_0,\dots,b_{c_i-2})\in A_i^j.
  \]
  \item For each $j\in M$, we have $z\in S_j$ if and only if $s(j)=1$.
\end{enumerate}
We denote this prime extension by $G_T$.
\end{mydef}

\begin{mydef}\label{def de tipos de tipos}
\begin{enumerate}
    \item A type $T$ of $G$ is \textit{empty} if $A_i^j=\emptyset$ for all $(i,j)\in N\times c_i$.
    \item A type $T$ of $G$ is \emph{full} if for every $x\in G$, there are $i\in N$ and $j\in c_i$ such that $x\in A^j_i$.
    \item A type $T$ of $G$ is $\cF$\textit{-good} if $G_T\in\cF$.
    \item A prime extension $F$ of $G$ is a \textit{full prime extension} of $G$ if there is a full type $T$ of $G$ that is $\mathcal{F}$-good and $F$ is isomorphic to $G_T$.
\end{enumerate}
\end{mydef}

Note that if $T$ is an empty type, then $G_T$ is a copy of $G$ with a new isolated vertex. In the class of graphs, the unique full $\cF$-good type of $G$ adds a vertex adjacent to every element of $G$.

\begin{mydef}\label{FEP def}
We say that $\cF$ satisfies the \textit{full extension property} (FEP) if every $G\in\cF$ has a full $\cF$-good type.
\end{mydef}

Note that FEP is a purely combinatorial condition on the finite class $\cF$: it asserts that a certain finite extension always belongs to $\cF$. Combined with FAP, it provides the combinatorial tools needed to control the rank from above. This reflects the approach of the present paper, where the Rank Property is established by finite combinatorial constructions rather than model-theoretic methods.

\begin{mydef}\label{def de adjacencia}
Let $G\in\sigma\cF$.
\begin{enumerate}
    \item If $x,y\in G$ and $i\in N$, we say that \emph{$x$ is $i$-adjacent to $y$ in $G$} if there is a $c_i$-tuple $(b_0,\dots,b_{c_i-1})\in R_i(G)$ such that $x=b_l$ and $y=b_m$ for some $l,m\in c_i$ with $l\neq m$. We denote this by $x\simup{G}_iy$.\footnote{Note that $x\simup{G}_i y$ if and only if $y\simup{G}_i x$.}
    \item We say that $x$ and $y$ are \emph{adjacent in $G$}, denoted by $x\simup{G} y$, if $x\simup{G}_i y$ for some $i\in N$.
    \item If $H\subseteq G$, then $H$ is \textit{complete in $G$} if $x\simup{G}y$ for all $x,y\in H$ with $x\neq y$.
    \item If $A,B\subseteq G$ are disjoint, we say that \emph{$A$ is disconnected from $B$ in $G$} if $x\not\simup{G} y$ for all $x\in A$ and all $y\in B$, and write $A\not\simup{G}B$.
\end{enumerate}
\end{mydef}

From now on, in this section, assume that $\cF$ satisfies FEP.

\begin{lemma}\label{continue with completes caso general}
Let $X\in\sigma\cF$ and $F\in\age(X)$ be such that $\rk_X(F)\geq\gamma+m$, where $\gamma$ is a limit ordinal or $0$. Then there is $G=\{g_l\mid l\in m\}\in\age(X\setminus F)$ such that $G$ is complete in $X$, $\rk_X(F\cup G)\geq \gamma$, and $v\simup{X}w$ for all $v\in F$ and all $w\in G$. In particular, if $\rk(X)\geq\gamma+m$, there exists $G=\{g_l\mid l\in m\}\in\age(X)$ that is complete in $X$ and $\rk_X(G)\geq\gamma$.
\end{lemma}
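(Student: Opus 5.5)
We argue by induction on $m$, building the complete set $G$ one element at a time with the full extension property. The invariant after $k$ steps, for $k\le m$, is a set $G_k=\{g_l\mid l<k\}\subseteq X\setminus F$ such that $G_k$ is complete in $X$, every $v\in F$ is adjacent in $X$ to every $w\in G_k$, and $\rk_X(F\cup G_k)\geq\gamma+(m-k)$. For $k=0$ this is the hypothesis $\rk_X(F)\geq\gamma+m$. At $k=m$ we get the required $G=G_m$ with $\rk_X(F\cup G)\geq\gamma$.

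For the inductive step, suppose $k<m$ and $G_k$ is given. Put $E=F\cup G_k\in\age(X)\subseteq\cF$. Since $\gamma$ is a limit ordinal or $0$ and $m-k\geq 1$, the ordinal $\gamma+(m-k)$ is a successor, namely $\big(\gamma+(m-k-1)\big)+1$. By FEP (Definition~\ref{FEP def}), $E$ has a full $\cF$-good type $T$, so $E_T=E\cup\{z\}$ is a prime extension of $E$ in $\cF$. Because $\rk_X(E)\geq(\gamma+(m-k-1))+1$, Definition~\ref{2:definition_rankFunction} gives a realization $C=E\cup\{g_k\}$ of $E_T$ in $X$ over $E$ with $\rk_X(C)\geq\gamma+(m-k-1)$. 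Set $G_{k+1}=G_k\cup\{g_k\}$.

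It remains to check that $g_k$ is adjacent in $X$ to every $x\in E$. Since $T$ is full, for each $x\in E$ there are $i\in N$ and $j<c_i$ with $x$ occurring in some tuple $\bar b\in A_i^j$. Inserting $z$ at position $j$ of $\bar b$ gives a tuple in $R_i(E_T)$ in which $x$ and $z$ sit at different positions. The isomorphism $E_T\to C$ fixes $E$ pointwise and sends $z$ to $g_k$, so the image tuple lies in $R_i(C)$, hence in $R_i(X)$ because $C\leq X$. Thus $x\simup{X}_i g_k$. Irreflexivity is not needed here, since $g_k\notin E$ already gives distinctness. So $g_k$ is adjacent to all of $F$ and to all of $G_k$, which preserves both completeness of $G_{k+1}$ and the adjacency to $F$, and the invariant holds at $k+1$.

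For the ``in particular'' statement, take $F=\emptyset$. This gives $G$ complete in $X$ with $\rk_X(G)\geq\gamma$, and the condition relative to $F$ is vacuous. The only step that needs care is reading ``full type'' as ``each $x\in E$ occurs in some tuple of some $A_i^j$'' and converting that membership into $i$-adjacency in $X$. The ordinal bookkeeping works precisely because $\gamma$ is a limit or $0$, so each $\gamma+(m-k)$ with $k<m$ is a successor and one application of the successor clause of Definition~\ref{2:definition_rankFunction} is available per step.
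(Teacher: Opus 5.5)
Your proposal is correct and follows essentially the same route as the paper. The paper also builds a chain $F_0=F\leq F_1\leq\dots\leq F_m$ in which each $F_{k+1}$ is a full prime extension of $F_k$ with $\rk_X(F_k)\geq\gamma+(m-k)$, and then takes $G=F_m\setminus F$. Beyond that, you spell out two points the paper leaves implicit: that each $\gamma+(m-k)$ with $k<m$ is a successor ordinal, and that a full type of $E$ forces the realizing element to be adjacent in $X$ to every element of $E$.
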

\begin{proof}
As $\cF$ satisfies FEP, we construct recursively $(F_k)_{k\leq m}\subseteq \age(X)$ such that $F_0=F$ and for all $k<m$: (1) $F_{k+1}$ is a full prime extension of $F_k$, and (2) $\rk_X(F_k)\geq \gamma +(m-k)$. Then $G:=F_m\setminus F$ is as desired.
\end{proof}
 
The first step is to construct, for each $n\geq 1$, a finite structure $H_n$ of exact rank $n$. These serve as universal structures at each finite level: $H_n$ realizes every prime extension of every transversal of its levels, which drives the lower bound on the rank.

We recursively construct a sequence $(X_n)_{n \in \w}$ of finite sets and a sequence of $\cL$-structures $(H_n)_{n\in\w}$ such that:
\begin{enumerate}
    \item The domain of $H_n$ is $\bigcup_{i\in n}X_i$.
    \item $X_0=\{v_T\mid T\text{ is an }\cF\text{-good type of }\emptyset\}$, with $v_T\neq v_{T'}$ for $T\neq T'$. Each $v_T$ realizes $T$ in $H_n$ for every $n\geq 1$.
    \item $X_k \cap X_l = \emptyset$ if $k \neq l$.
    \item If $x,y\in X_k$ with $x\neq y$, then $x\not\simup{H_l}y$ for all $l\geq k$.
    \item For every $j \in\w$, each $\vec{x} = (x_i)_{i \in j} \in \prod_{i \in j} X_i$, and any $\cF$-good type $T$ of $\{x_i\mid i\in j\}$, there exists a vertex $v(\vec{x}, T) \in X_j$ such that $\{x_i\mid i\in j\}\cup\{v(\vec{x}, T)\}$ is the prime extension of $\{x_i\mid i\in j\}$ encoded by $T$. Also, if $(\vec{x},T_0)\not= (\vec{y},T_1)$ then $v(\vec{x},T_0)\not= v(\vec{y},T_1)$
\end{enumerate}

The construction proceeds recursively, starting from $H_0=\emptyset$. Suppose $X_0,\ldots,X_{j-1}$ and $H_j$ have been defined for some $j\geq 0$ (so $H_j$ has domain $\bigcup_{i\in j}X_i$, with $H_0=\emptyset$). For every $\vec{x}=(x_i)_{i\in j}\in\prod_{i\in j}X_i$ and every $\cF$-good type $T$ of $\{x_i\mid i\in j\}$, let $v(\vec{x},T)$ be a new vertex with $v(\vec{x},T_0)\neq v(\vec{y},T_1)$ whenever $(\vec{x},T_0)\neq(\vec{y},T_1)$. Set
\[
X_j=\{v(\vec{x},T)\mid(\vec{x}\in\prod_{i\in j}X_i)\wedge(T\text{ is a }\cF\text{-good type of }\{x_i\mid i\in j\})\}.
\]
For every such $(\vec{x},T)$, freely amalgamate $H_j$ and $\{x_i\mid i\in j\}\cup\{v(\vec{x},T)\}$ over $\{x_i\mid i\in j\}$. Then freely amalgamate the resulting collection to obtain $H_{j+1}$. At $j=0$, the empty product yields $\vec{x}=\emptyset$ and $\{x_i\mid i\in 0\}=\emptyset$, thus the amalgamations are taken with $H_0=\emptyset$ over $\emptyset$, producing $X_0$ (matching (2)) and $H_1$.

\begin{lemma}\label{rank on the Hn graph caso general}
Let $n\geq 1$. Then for all $j \leq n$ and all $(x_i)_{i \in j} \in \prod_{i \in j} X_i$, we have $\rk_{H_n}(\{x_i \mid i \in j\}) \geq n - j$.
\end{lemma}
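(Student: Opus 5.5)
We argue by induction on $k:=n-j$, with $n\geq 1$ fixed. Throughout we use that $H_n$ is built from $H_{j+1}$ (for $j<n$) by further free amalgamations. Hence the induced substructure of $H_n$ on $\bigcup_{i\le j}X_i$ is exactly $H_{j+1}$. In particular, for $\vec x=(x_i)_{i\in j}\in\prod_{i\in j}X_i$, $T$ an $\cF$-good type of $\{x_i\mid i\in j\}$ and $v=v(\vec x,T)\in X_j$, the set $\{x_i\mid i\in j\}\cup\{v\}$ induces in $H_n$ the prime extension encoded by $T$. This is property (5), and we must check that it persists from $H_{j+1}$ to $H_n$. That holds because free amalgamation never adds relations among points of a single factor: every tuple lying in the old part keeps its status.

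The base case $k=0$, i.e.\ $j=n$, is trivial since every rank is $\geq 0$. For the inductive step, let $j<n$ and suppose the claim holds for $j+1$, so that every transversal $\{x_i\mid i\in j+1\}$ has rank at least $n-j-1$ in $H_n$. Fix $\vec x=(x_i)_{i\in j}$ and put $F=\{x_i\mid i\in j\}$. To show $\rk_{H_n}(F)\geq n-j=(n-j-1)+1$, take an arbitrary prime extension $B$ of $F$ in $\cF$. By Definition~\ref{def de type}, and because $\cF$ is a class in the finite relational language with irreflexivity, $B$ is isomorphic over $F$ to $F_T$ for some $\cF$-good type $T$ of $F$. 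We must also check that the points $x_i$ are distinct. This follows from (3), since they lie in distinct levels $X_i$, so $|F|=j$ and $F$ is a genuine set indexed by $\vec x$.

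Now set $v:=v(\vec x,T)\in X_j$. By property (5), as it persists in $H_n$ by the previous paragraph, $C:=F\cup\{v\}$ is a realization of $B$ in $H_n$. Moreover, $(x_0,\dots,x_{j-1},v)\in\prod_{i\in j+1}X_i$ is a transversal of length $j+1\le n$. The inductive hypothesis therefore gives $\rk_{H_n}(C)\geq n-(j+1)$. As $B$ was arbitrary, the successor clause of Definition~\ref{2:definition_rankFunction} yields $\rk_{H_n}(F)\geq n-j$, completing the induction.

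The only delicate point is the persistence claim: that the realization found in $H_{j+1}$ remains an induced realization in $H_n$. We verify it by noting that each stage $H_{l+1}$ is a free amalgam of $H_l$ with small structures over subsets of $H_l$. By the definition of free amalgamation, no new relation tuple meets both sides, and $H_l$ is an induced substructure of $H_{l+1}$. Chaining these inclusions gives $H_{j+1}\le H_n$.
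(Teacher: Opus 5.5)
Your proof is correct and is the paper's argument: induction on $n-j$, where each prime extension of the transversal $F$ is encoded by an $\cF$-good type $T$ and realized by $v(\vec x,T)\in X_j$, and the inductive hypothesis is then applied to the longer transversal. Your extra checks are details the paper leaves implicit: that property (5) persists from $H_{j+1}$ to $H_n$ because $H_l\leq H_{l+1}$ under free amalgamation, and that the $x_i$ are distinct.
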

\begin{proof}
Fixing $n$, we proceed by induction on $l = n - j$. If $n - j = 0$, the result holds. Suppose the result is true for $l = n - j$ and let us prove it for $l + 1 = n - (j - 1)$. Let $\vec{x} = (x_i)_{i \in j - 1} \in \prod_{i \in j - 1} X_i$. Every prime extension of $\{x_i \mid i \in j - 1\}$ in $\cF$ is encoded by an $\cF$-good type $T$, and is realized in $H_n$ by $\{x_i \mid i \in j - 1\} \cup \{v(\vec{x}, T)\}$. By the induction hypothesis, $\rk_{H_n}(\{x_i \mid i \in j - 1\} \cup \{v(\vec{x}, T)\}) \geq n - j$, hence $\rk_{H_n}(\{x_i \mid i \in j - 1\}) \geq n - (j - 1)$.
\end{proof}

Note that applying Lemma~\ref{rank on the Hn graph caso general} with $j=1$, every prime extension of $\emptyset$ has a realization with rank at least $n-1$ in $H_n$, thus $\rk(H_n)\geq n$.

\begin{lemma}\label{Hn does not contain big completes caso general}
Let $n\geq 1$ and $G\subseteq H_n$ such that $|G|= n+1$. Then $G$ is not complete in $H_n$.
\end{lemma}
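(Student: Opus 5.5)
The plan is a pigeonhole argument on the levels $X_0,\dots,X_{n-1}$. By property (1) of the construction, the domain of $H_n$ is $\bigcup_{i\in n}X_i$, a disjoint union of $n$ levels by property (3). If $|G|=n+1$, pigeonhole gives two distinct elements $x,y\in G$ in the same level $X_k$ with $k<n$. Property (4), applied with $l=n\geq k$, gives $x\not\simup{H_n}y$. So $G$ is not complete in $H_n$.

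The only real work is to confirm that property (4) actually holds for the structures built. I would check this by induction on the stage $l\geq k$, and this is the step I expect to need care.

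\emph{Base stage $l=k+1$.} This is where $X_k$ is introduced. Each new vertex $v(\vec{x},T)$ is adjoined to $H_k$ by freely amalgamating $H_k$ with $\{x_i\mid i\in k\}\cup\{v(\vec{x},T)\}$ over $\{x_i\mid i\in k\}$. The resulting structures are then freely amalgamated over $H_k$ to form $H_{k+1}$. Free amalgamation means no tuple of a relation $R_i$ meets two different pieces outside the common base. Since two distinct vertices of $X_k$ lie in different pieces and outside $H_k$, no tuple of any $R_i(H_{k+1})$ contains both of them. Hence they are not $i$-adjacent for any $i\in N$.

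\emph{Later stages $l>k+1$.} Here $H_{l}\leq H_{l+1}$ is a substructure, and adjacency between elements of $H_l$ could only come from tuples containing some new vertex $v\in X_l$. Such tuples live in the piece $\{x_i\mid i\in l\}\cup\{v\}$. That set contains exactly one element from each earlier level, so in particular only one element of $X_k$. Moreover, the only relation tuples involving $v$ lie within this piece, again by freeness of the amalgamation. So no tuple contains $v$ together with two distinct elements of $X_k$.

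Therefore adjacency between distinct elements of $X_k$ is never created, property (4) holds, and the pigeonhole argument finishes the proof.
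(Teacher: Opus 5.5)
Your proof is correct and matches the paper's: pigeonhole over the $n$ levels $X_0,\dots,X_{n-1}$, then property (4) of the construction. The paper cites (4) as a built-in property, while you also verify it from the freeness of the amalgams, and that check is sound (the step from $H_l$ to $H_{l+1}$ should be stated for all $l\geq k+1$ rather than $l>k+1$, but your argument covers it, since each piece $\{x_i\mid i\in l\}\cup\{v\}$ meets $X_k$ in exactly one point whenever $k<l$).
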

\begin{proof}
Let $G= \{w_j\mid j\in n+1\}$. Since $H_n = \bigcup_{i \in n} X_i$, there must exist distinct $j, j' \in n+1$ such that $w_j, w_{j'} \in X_i$ for some $i \in n$, and thus $w_j\not\simup{H_n}w_{j'}$.
\end{proof}

\begin{corollary}\label{exact rank of Hn caso general}
For every $n\geq 1$ we have $\rk(H_n)=n$.
\end{corollary}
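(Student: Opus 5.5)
The plan is to prove the two inequalities $\rk(H_n)\geq n$ and $\rk(H_n)\leq n$ separately.

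For the lower bound, apply Lemma~\ref{rank on the Hn graph caso general} with $j=1$. Each prime extension of $\emptyset$ in $\cF$ is encoded by an $\cF$-good type $T$ of $\emptyset$. It is realized in $H_n$ by the singleton $\{v_T\}$ with $v_T\in X_0$, and the lemma gives $\rk_{H_n}(\{v_T\})\geq n-1$. By Definition~\ref{2:definition_rankFunction} this yields $\rk(H_n)\geq n$, as already observed after that lemma.

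For the upper bound, argue by contradiction. Suppose $\rk(H_n)\geq n+1$, and apply Lemma~\ref{continue with completes caso general} with $\gamma=0$ and $m=n+1$. Its "in particular" clause produces $G=\{g_l\mid l\in n+1\}\in\age(H_n)$ with $n+1$ distinct elements that is complete in $H_n$. Lemma~\ref{Hn does not contain big completes caso general} says no subset of $H_n$ of size $n+1$ is complete, so we have a contradiction. Hence $\rk(H_n)\leq n$, and combined with the lower bound, $\rk(H_n)=n$.

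The only step needing care is checking that Lemma~\ref{continue with completes caso general} really delivers $n+1$ pairwise adjacent elements. This rests on FEP. Each full prime extension adds a new vertex $w$ that, for every $x$ in the current structure, occurs together with $x$ in some tuple of some $R_i$, and such a tuple has distinct entries by irreflexivity. So the new vertex is adjacent to all previously added vertices, and the recursion, which keeps rank at least $(n+1)-k$ at stage $k$, can continue for all $n+1$ steps. We also need $n+1\geq 2$ for the contradiction to be nontrivial, which holds since $n\geq 1$. I do not expect a real obstacle: the corollary is a direct combination of the three preceding lemmas.
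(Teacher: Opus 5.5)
Your proof is correct and matches the paper's argument. The lower bound is the observation after Lemma~\ref{rank on the Hn graph caso general} with $j=1$. The upper bound is by contradiction: Lemma~\ref{continue with completes caso general} with $\gamma=0$ and $m=n+1$ yields a complete set of size $n+1$ in $H_n$, which Lemma~\ref{Hn does not contain big completes caso general} forbids; your extra check that full prime extensions produce pairwise adjacent vertices just unpacks the proof of Lemma~\ref{continue with completes caso general}.
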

\begin{proof}
We already know $\rk(H_n) \geq n$. If $\rk(H_n) \geq n + 1$, by Lemma~\ref{continue with completes caso general}, there is $G\in\age(H_n)$ with $|G|=n+1$ and $G$ complete in $H_n$, contradicting Lemma~\ref{Hn does not contain big completes caso general}.
\end{proof}

By Lemma~\ref{rank on the Hn graph caso general}, $\rk_{H_n}(\{v\}) \geq n - 1$ for all $n\geq 1$ and all $v\in X_0$.

Note that for every $G\in\cF$ of size at most $n$ and any enumeration $\{y_i\}_{i \in m}$ of its vertices, there exists an embedding $f: G \to H_n$ such that $f(y_i) \in X_i$ for all $i \in m$.

\begin{theorem}\label{sufficient condition for having rank at least n caso general}
For every $n \geq 1$ and every $X\in\sigma\cF$, if $H_n$ can be embedded into $X$, then $\rk(X) \geq n$.
\end{theorem}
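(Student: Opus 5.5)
This follows almost immediately from results already established, with no new combinatorics needed. By Corollary~\ref{exact rank of Hn caso general} (indeed, already by the remark following Lemma~\ref{rank on the Hn graph caso general}), we have $\rk(H_n)\geq n$, where the rank is computed inside $H_n$ itself. The plan is to transfer this lower bound to any $X$ that contains a copy of $H_n$.

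First, I fix an embedding $f:H_n\to X$ and let $Y:=f[H_n]\leq X$. Then $Y\simeq H_n$.

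Second, I observe that the rank of a structure depends only on its isomorphism type. Any isomorphism $g:H_n\to Y$ carries finite substructures of $H_n$ to finite substructures of $Y$. It also carries prime extensions in $\cF$ to prime extensions in $\cF$, since $\cF$ is closed under isomorphisms, and realizations to realizations. A transfinite induction on $\alpha$ then gives $\rk_{H_n}(F)\geq\alpha$ if and only if $\rk_Y(g[F])\geq\alpha$. This is the same argument the paper sketches for automorphisms. In particular, $\rk(Y)=\rk(H_n)\geq n$.

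Third, I apply Corollary~\ref{1.13} (equivalently, Lemma~\ref{rank on substructures vs rank on the total} with $F=\emptyset$) to $Y\leq X$, which gives $\rk(X)\geq\rk(Y)\geq n$. This completes the argument.

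I expect no real obstacle. The only point requiring care is the isomorphism-invariance in the second step, and it is routine because the definition of $\rk$ refers only to $\cF$ and to substructures. Note also that Lemma~\ref{rank on substructures vs rank on the total} does not use AP, so no extra hypothesis is needed beyond what is already available in this section.
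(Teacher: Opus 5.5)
Your proof is correct and follows essentially the paper's route: the lower bound on $H_n$ is transferred to $X$ via Lemma~\ref{rank on substructures vs rank on the total}. The only differences are cosmetic. The paper applies that lemma to the singletons $\{v\}$, $v\in X_0$, and then re-derives $\rk_X(\emptyset)\geq n$, whereas you apply it directly at $F=\emptyset$ (Corollary~\ref{1.13}) and spell out the isomorphism-invariance that the paper uses silently when it writes ``assume $H_n\leq X$''.
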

\begin{proof}
Assume $H_n \leq X$. By Lemma~\ref{rank on the Hn graph caso general}, $\rk_{H_n}(\{v\}) \geq n - 1$ for all $v\in X_0$, and by Lemma~\ref{rank on substructures vs rank on the total}, $\rk_{X}(\{v\}) \geq n - 1$, which implies $\rk_X(\emptyset) \geq n$.
\end{proof}

\begin{corollary}
For every $n \geq 1$ and $X\in \sigma\mathcal{F}$, there exists some $N$ such that if $X$ contains all structures of cardinality $N$, then $\rk(X) \geq n$.
\end{corollary}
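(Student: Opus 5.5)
The plan is to take $N := |H_n|$, the (finite) cardinality of the structure $H_n$ constructed above, and then reduce to Theorem~\ref{sufficient condition for having rank at least n caso general}. The reading of the statement I will use is: ``$X$ contains all structures of cardinality $N$'' means that every $B\in\cF$ with $|B|=N$ embeds into $X$.

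First I check that $N$ is a finite number. $H_n$ is finite because each $X_j$ is finite, which I verify by induction on $j$. The language $\cL$ is finite, so a finite set $G$ has only finitely many types $T=(A,s)$, since each $A_i^j\subseteq G^{c_i-1}$ and $s\in 2^M$. Hence $X_0$ is finite. If $X_0,\dots,X_{j-1}$ are finite, then $\prod_{i\in j}X_i$ is finite. Each tuple in it has finitely many $\cF$-good types, so $X_j$ is finite as well.

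Next I check that $H_n$ lies in $\cF$. It is obtained by finitely many free amalgamations of members of $\cF$. By FAP, each of these free amalgams stays in $\cF$, and I would note that the iterated amalgamation of a finite collection over a common base is still in $\cF$.

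Now apply the hypothesis with $B=H_n$, which has cardinality exactly $N$. This gives an embedding $H_n\embed X$. Theorem~\ref{sufficient condition for having rank at least n caso general} then yields $\rk(X)\geq n$.

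The only genuine point is the membership $H_n\in\cF$. If the statement were instead read as ``all structures of size at most $N$'', the same argument applies verbatim.
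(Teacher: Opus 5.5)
Your proposal is correct and is the argument the paper intends: the corollary is stated without proof as an immediate consequence of Theorem~\ref{sufficient condition for having rank at least n caso general}. Taking $N=|H_n|$, the hypothesis forces $H_n\embed X$, which gives $\rk(X)\geq n$. Your extra checks — that $H_n$ is finite, and that $H_n\in\cF$ because it is built by iterated free amalgamation — are exactly the details the paper leaves implicit.
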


\begin{corollary}\label{if G contains all finite graphs then has rank at least omega caso general}
Let $X\in \sigma \mathcal{F}$. If every element in $\mathcal{F}$ can be embedded into $X$, then $\rk(X) \geq \omega$.
\end{corollary}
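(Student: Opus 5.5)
The plan is to reduce the corollary to Theorem~\ref{sufficient condition for having rank at least n caso general}, applied once for each finite level. Fix $X\in\sigma\cF$ and assume every member of $\cF$ embeds into $X$. We want to show that $\rk(X)\geq n$ for every $n\geq 1$. Since $\w$ is a limit ordinal, Definition~\ref{2:definition_rankFunction} then gives $\rk(X)\geq\w$.

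Fix $n\geq 1$. The first step is to check that $H_n\in\cF$. By construction, $H_n$ is a finite $\cL$-structure with domain $\bigcup_{i\in n}X_i$. Each $X_i$ is finite: the language has finitely many symbols of finite arity, so a finite set has only finitely many $\cF$-good types, and $X_i$ is indexed by pairs consisting of a tuple in the finite product $\prod_{k\in i}X_k$ together with a good type. Membership in $\cF$ follows by induction on $j$. At stage $j$, $H_{j+1}$ is obtained by free amalgamation, over the sets $\{x_i\mid i\in j\}\leq H_j$, of $H_j$ with the prime extensions $\{x_i\mid i\in j\}\cup\{v(\vec x,T)\}$. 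Each of these prime extensions lies in $\cF$ because $T$ is $\cF$-good. FAP keeps each amalgam in $\cF$. Iterating over the finitely many pairs $(\vec x,T)$, and amalgamating each new piece over the relevant base inside the structure built so far, keeps the result in $\cF$.

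Since $H_n\in\cF$, the hypothesis gives an embedding $H_n\embed X$. Replacing $H_n$ by its isomorphic image, we may assume $H_n\leq X$. The rank statements of Lemma~\ref{rank on the Hn graph caso general} transfer along this isomorphism, because the notion of realization is invariant under isomorphism. Theorem~\ref{sufficient condition for having rank at least n caso general} then yields $\rk(X)\geq n$. As $n$ was arbitrary, $\rk(X)\geq\w$.

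The only step needing care is the claim that $H_n\in\cF$. The paper describes the construction of $H_{j+1}$ as ``freely amalgamate the resulting collection'', so the iterated free amalgamation has to be shown to stay in $\cF$. I would handle this as follows. List the pairs $(\vec x,T)$ in order and add the new vertices $v(\vec x,T)$ one at a time. At each addition, freely amalgamate the structure built so far with the single prime extension over $\{x_i\mid i\in j\}$. Each such step is a single application of FAP, so every intermediate structure lies in $\cF$.

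Alternatively, one can bypass $H_n$ entirely. Proposition~\ref{rk geq omega iff age} gives $\rk(X)\geq\w$ from $\age(X)=\cF$, once we know that each $H_n\in\cF$ has $\rk(H_n)\geq n$. That fact is Corollary~\ref{exact rank of Hn caso general}, and here too the membership $H_n\in\cF$ is the point to check.
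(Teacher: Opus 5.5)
Your proposal is correct and is essentially the argument the paper intends; the paper gives no written proof and treats the corollary as immediate from Theorem~\ref{sufficient condition for having rank at least n caso general}. That argument is: each $H_n\in\cF$ embeds into $X$, so $\rk(X)\geq n$ for every $n$, and the limit clause gives $\rk(X)\geq\w$. Your explicit check that $H_n\in\cF$, by adding the vertices $v(\vec x,T)$ one at a time with a single application of FAP each, fills in a step the paper leaves implicit.
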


With the finite universal structures in hand, we now build countable structures of arbitrary prescribed rank. The key device is that of a \emph{kernel}: a finite substructure $H$ over which infinitely many ``leaves'' are amalgamated, with FAP ensuring that distinct leaves are pairwise disconnected. This disconnection is what controls the upper bound on the rank.

\begin{mydef}
Let $H \in \cF$ and $X \in \sigma\cF$ be such that $H \leq X$. We say that $H$ is a \textit{kernel of $X$} if there exists a family $\{Y_n \mid n \in \omega\}$ such that $\emptyset \neq Y_n \leq X$ for all $n \in \omega$, $X = (\bigsqcup_{n \in \omega} Y_n) \sqcup H$, and $Y_n\not\simup{X}Y_m$ for $n\neq m$.
\end{mydef}

\begin{lemma}\label{rank bounded by cardinality of kernel caso general}
Let $X\in \sigma\mathcal{F}$ and $H< X$ with $H$ a kernel of $X$, and let $Y\leq X\setminus H$ satisfy $w\not\simup{X} Y$ for every $w\in X\setminus (Y\cup H)$. Then for all $F\in\age(Y\cup H)$ with $F\cap Y\neq\emptyset$ and every $x\in X\setminus(Y\cup H)$, $\rk_X(F\cup\{x\})\leq |H|$.
\end{lemma}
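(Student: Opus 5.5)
The plan is to argue by contradiction using Lemma~\ref{continue with completes caso general} with $\gamma=0$. The idea is that high rank of $F\cup\{x\}$ forces many points adjacent both to a point of $Y$ and to $x$, and the separation hypothesis leaves room for such points only inside $H$.

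Fix $F\in\age(Y\cup H)$ with $F\cap Y\neq\emptyset$, pick some $y\in F\cap Y$, and fix $x\in X\setminus(Y\cup H)$. Suppose towards a contradiction that $\rk_X(F\cup\{x\})\geq |H|+1$. Applying Lemma~\ref{continue with completes caso general} to $F\cup\{x\}\in\age(X)$ with $\gamma=0$ and $m=|H|+1$ (this uses FEP, which is assumed throughout the section), we obtain $G=\{g_l\mid l\in |H|+1\}\subseteq X\setminus(F\cup\{x\})$ such that $v\simup{X}w$ for all $v\in F\cup\{x\}$ and all $w\in G$. We do not even need that $G$ is complete; we only need that every $g\in G$ is adjacent to both $y$ and $x$.

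Now we locate each $g\in G$.
\begin{itemize}
\item Since $g\simup{X}x$, the point $g$ cannot lie in $Y$. Otherwise $x\in X\setminus(Y\cup H)$ would be adjacent to a point of $Y$, contradicting the hypothesis $x\not\simup{X}Y$ (using the symmetry of $\simup{X}$).
\item Since $g\simup{X}y$ with $y\in Y$, the point $g$ cannot lie in $X\setminus(Y\cup H)$, again by the hypothesis $w\not\simup{X}Y$ for such $w$.
\end{itemize}
Hence $g\in H$ for every $g\in G$, so $G\subseteq H$. This is impossible because $|G|=|H|+1>|H|$. Therefore $\rk_X(F\cup\{x\})\leq|H|$.

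There is no real obstacle here. The only points needing care are to check that the hypotheses of Lemma~\ref{continue with completes caso general} hold ($\gamma=0$ is allowed) and to note that the kernel structure itself is not used beyond the stated separation assumption on $Y$. Since $G$ is disjoint from $F\cup\{x\}$, the elements $g$ are distinct from $y$ and $x$, so the adjacency statements are meaningful.
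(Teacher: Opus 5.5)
Your proof is correct and follows essentially the same route as the paper. The paper also applies Lemma~\ref{continue with completes caso general} to obtain $|H|+1$ points adjacent to every element of $F\cup\{x\}$, then uses the separation hypothesis twice (once through a point of $F\cap Y$, once through $x$) to force each of these points into $H$, which is impossible.
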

\begin{proof}
Set $n:= |H|$. If $\rk_X(F\cup\{x\})\ge n+1$, by Lemma~\ref{continue with completes caso general} there exist $n+1$ vertices $w_0,\dots,w_n\in X\setminus(F\cup\{x\})$ such that $w_i\simup{X}v$ for all $v\in F$ and $x\simup{X}w_i$.

\begin{claim}
Each $w_i$ lies in $H$.
\end{claim}

\textit{Proof of the claim:} If $w_i\in X\setminus(Y\cup H)$, choosing $v\in F\cap Y$ gives $w_i\simup{X}v$, contradicting $w_i\not\simup{X}Y$. Hence $w_i\in Y\cup H$. Since $x\in X\setminus(Y\cup H)$ and $x\simup{X}w_i$, we have $w_i\not\in Y$. Therefore $w_i\in H$.
\finishclaim

Since there are $n+1$ such $w_i\in H$, we get $|H|\ge n+1$, a contradiction.
\end{proof}

\begin{corollary}\label{rank is cero for disconnected caso general}
Let $Y<X$ be such that $w\not\simup{X} Y$ for all $w\in X\setminus Y$. Then for all $F\in\age(Y)\setminus\{\emptyset\}$ and all $x\in X\setminus Y$ we have $\rk_X(F\cup\{x\})=0$.
\end{corollary}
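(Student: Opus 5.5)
This is the special case $H=\emptyset$ of Lemma~\ref{rank bounded by cardinality of kernel caso general}, and the plan is to run that argument directly rather than cite the lemma, since the lemma is stated for a kernel while here we only have the disconnection hypothesis on $Y$. Fix $F\in\age(Y)$ nonempty and $x\in X\setminus Y$. Suppose, towards a contradiction, that $\rk_X(F\cup\{x\})\geq 1$.

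Apply Lemma~\ref{continue with completes caso general} with $\gamma=0$, $m=1$ to the set $F\cup\{x\}$. It yields a vertex $w\in X\setminus(F\cup\{x\})$ with $w\simup{X}v$ for every $v\in F\cup\{x\}$. Concretely, $w$ realizes a full $\cF$-good prime extension of $F\cup\{x\}$, which exists by FEP.

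Now split on where $w$ lies. If $w\in X\setminus Y$, pick any $v\in F\subseteq Y$ (possible since $F\neq\emptyset$). Then $w\simup{X}v$, which contradicts $w\not\simup{X}Y$. If instead $w\in Y$, then $x\in X\setminus Y$ satisfies $x\simup{X}w$ (adjacency is symmetric), so $x\not\simup{X}Y$ fails, again a contradiction. Hence $\rk_X(F\cup\{x\})=0$.

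The only delicate point is checking that Lemma~\ref{continue with completes caso general} applies with $m=1$. It needs $\rk_X(F\cup\{x\})\geq 0+1$ and FEP, both of which hold. No completeness of a single vertex is required, and the conclusion that $w$ is adjacent to all of $F\cup\{x\}$ is exactly what the case split uses.
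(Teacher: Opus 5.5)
Your proof is correct and is essentially the paper's argument: the corollary is the case $H=\emptyset$ of Lemma~\ref{rank bounded by cardinality of kernel caso general}, whose proof extracts witnesses via Lemma~\ref{continue with completes caso general} and forces them into $H$, and you have run that argument directly. You are also right that this corollary is not a literal instance of the lemma, since $\emptyset$ need not be a kernel of $X$. That is harmless because the lemma's proof never uses the kernel hypothesis, which is why rerunning the argument, as you did, is the clean way to get it.
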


\begin{lemma}\label{rank for graphs with kernels caso general}
Let $X \in\sigma\cF$, $H < X$ a kernel of $X$ and $Y \leq X \setminus H$ such that every vertex $w \in X \setminus (Y \cup H)$ is not adjacent to any vertex in $Y$. Then, for any $F \in \age(Y \cup H)$ with $F \cap Y \neq \emptyset$, we have $\rk_X(F) \leq \rk_{Y \cup H}(F) + |H|+1$.
\end{lemma}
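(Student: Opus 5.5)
We prove the more precise statement: for every ordinal $\beta$ and every $F\in\age(Y\cup H)$ with $F\cap Y\neq\emptyset$,
\[
\rk_{Y\cup H}(F)<\beta \;\Longrightarrow\; \rk_X(F)<\beta+|H|+1 .
\]
The argument is by induction on $\beta$, and the lemma follows by taking $\beta=\rk_{Y\cup H}(F)+1$. Throughout we write $n:=|H|$ and $Z:=Y\cup H$.

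\textbf{Step 1: $\beta=0$.} The hypothesis $\rk_Z(F)<0$ is vacuous, so there is nothing to prove. For the induction it is cleaner to treat directly the case $\rk_Z(F)=\delta$ and show $\rk_X(F)\le\delta+n+1$.

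\textbf{Step 2: the successor step.} By Remark~\ref{characterization of rank 0} and the definition of rank, $\rk_Z(F)=\delta$ yields a prime extension $B=F\cup\{y\}$ of $F$ in $\cF$ such that every realization $C$ of $B$ inside $Z$ satisfies $\rk_Z(C)<\delta$. Note that $C\cap Y\supseteq F\cap Y\neq\emptyset$. We claim that every realization $C$ of $B$ in $X$ has $\rk_X(C)\le\delta+n$, which gives $\rk_X(F)\not\ge\delta+n+2$, i.e.\ $\rk_X(F)\le\delta+n+1$. There are two kinds of realizations.
\begin{itemize}
\item If $C=F\cup\{z\}$ with $z\in Z$, then $\rk_Z(C)=\delta'<\delta$. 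By the induction hypothesis, $\rk_X(C)\le\delta'+n+1\le\delta+n$, since $\delta'+1\le\delta$.
\item If $z\in X\setminus Z$, then Lemma~\ref{rank bounded by cardinality of kernel caso general} applies directly to $F$ and $x=z$, giving $\rk_X(C)\le n\le\delta+n$.
\end{itemize}
Hence every realization of $B$ in $X$ has rank at most $\delta+n$, as needed.

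\textbf{Step 3: the induction itself.} The induction runs on $\delta=\rk_Z(F)$, which is an ordinal because $\rk_Z(F)\le\rk_X(F)$ by Lemma~\ref{rank on substructures vs rank on the total}. (If $\rk_Z(F)=\infty$ the inequality is trivial.) Limit values of $\delta$ need no separate treatment: the argument in Step 2 uses only the inequality $\delta'<\delta$ for the ranks of realizations inside $Z$, together with the ordinal-arithmetic fact that $\delta'<\delta$ implies $\delta'+n+1\le\delta+n$. This holds because $\delta'+1\le\delta$, and adding $n$ on the right is monotone. So the induction is uniform over all ordinals $\delta$.

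\textbf{Main obstacle.} The delicate point is the rank-$0$ base case and the exact bookkeeping, i.e.\ checking that the offset $|H|+1$ suffices and that the ordinal addition is on the correct side. A secondary point is making sure that the extension $B$ witnessing the failure in $Z$ is a genuine prime extension in $\cF$. This is automatic from the definition of $\rk_Z$, so, as the paper notes, no amalgamation is required.
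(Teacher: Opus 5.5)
Your proof is correct and follows the paper's argument. You induct on $\delta=\rk_{Y\cup H}(F)$ and take a prime extension witnessing $\rk_{Y\cup H}(F)\not\geq\delta+1$. Its realizations in $X$ are bounded by the induction hypothesis when the new point lies in $Y\cup H$, and by Lemma~\ref{rank bounded by cardinality of kernel caso general} when it lies outside. The one difference is that you treat $\delta=0$ together with the general step, which is cleaner than the paper's split into two subcases; the paper's Subcase~1, which gives $\rk_X(F)=0$, is not needed for the bound.
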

\begin{proof}
Assume $Y<X\setminus H$. We proceed by induction on $\rk_{Y\cup H}(F)$.

\textbf{Case $\rk_{Y\cup H}(F)=0$:} There is an $\cF$-good type $T=((A^j_i\mid {i\in N,\;j\in c_i}),s)$ of $F$ such that $F_T$ cannot be realized in $Y\cup H$.

\textit{Subcase 1:} If $\ran(A^j_i)\cap Y\neq\emptyset$ for some $i\in N$ and $j\in c_i$, say $x\in\ran(A^j_i)\cap Y$, then any realization of $F_T$ in $X$ would require $w\in X\setminus(Y\cup H)$ adjacent to $x\in Y$.  Hence $\rk_X(F)=0$.

\textit{Subcase 2:} If $\ran(A^j_i) \cap Y = \emptyset$ for all $i\in N$ and $j\in c_i$, since realizations of $F_T$ have the form $F \cup \{w\}$ for $w \in X \setminus(Y\cup H)$, and by Lemma~\ref{rank bounded by cardinality of kernel caso general}, these have rank at most $|H|$. Thus $\rk_X(F) \leq |H|+1$.

\textbf{Inductive step:} Suppose the claim holds for all $\beta<\alpha$. Let $G\in\age(Y\cup H)$ such that $G\cap Y\neq\emptyset$ and $\rk_{Y\cup H}(G)=\alpha$.

Since $\rk_{Y \cup H}(G)\not\geq \alpha + 1$, there is an $\cF$-good type $T$ such that all realizations of $G_T$ in $Y \cup H$ have rank $<\alpha$. Any realization $J$ in $X$ lies in $\age(Y\cup H)$ or has the form $G\cup\{w\}$ for $w\in X\setminus(Y\cup H)$. By the inductive hypothesis, $\rk_X(J)\leq\rk_{Y\cup H}(J)+|H|+1<\alpha+|H|+1$ for the first case. By Lemma~\ref{rank bounded by cardinality of kernel caso general}, $\rk_X(G\cup\{w\})\leq |H|<\alpha+|H|+1$ for the second case. Hence $\rk_X(G)\le\alpha+|H|+1$.
\end{proof}

\begin{corollary}\label{rank on the total vs rank on the connected component for all finite substructures caso general}
Let $X\in \sigma\mathcal{F}$ and $Y \leq X$ such that $w\not\simup{X} Y$ for all $w\in X\setminus Y$, and let $F\in\age(Y)\setminus\{\emptyset\}$; then $\rk_X(F) = \rk_Y(F)$ or $\rk_X(F) = \rk_Y(F) + 1$.
\end{corollary}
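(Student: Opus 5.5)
The plan is to prove the two inequalities separately: the lower bound from the monotonicity results of Section~\ref{sec rank}, and the upper bound by specializing Lemma~\ref{rank for graphs with kernels caso general} to an empty kernel.

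\textbf{Lower bound.} Since $Y\leq X$ and $F\in\age(Y)$, Lemma~\ref{rank on substructures vs rank on the total} gives $\rk_Y(F)\leq\rk_X(F)$ immediately.

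\textbf{Upper bound.} I want $\rk_X(F)\leq\rk_Y(F)+1$. If $Y=X$ there is nothing to prove, so assume $Y<X$. Take $H=\emptyset$. Strictly speaking, $\emptyset$ need not be a kernel of $X$ in the sense of the definition, since $X$ may not split into infinitely many pairwise disconnected pieces. However, the proofs of Lemma~\ref{rank bounded by cardinality of kernel caso general} and Lemma~\ref{rank for graphs with kernels caso general} never use the kernel decomposition. They only use two hypotheses: no vertex of $X\setminus(Y\cup H)$ is adjacent to $Y$, and $F\cap Y\neq\emptyset$, where the latter holds because $F$ is nonempty and $F\subseteq Y$. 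So I will rerun that argument with $H=\emptyset$, which should yield $\rk_X(F)\leq\rk_Y(F)+0+1$.

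Concretely, I argue by induction on $\alpha=\rk_Y(F)$, for all nonempty $F\in\age(Y)$ simultaneously. If $\rk_Y(F)=\infty$ there is nothing to show. Otherwise there is an $\cF$-good type $T$ of $F$ such that every realization of $F_T$ in $Y$ has $Y$-rank $<\alpha$; when $\alpha=0$, $F_T$ has no realization in $Y$ at all. Any realization of $F_T$ in $X$ is of one of two kinds.
\begin{itemize}
  \item It lies inside $Y$, in which case the inductive hypothesis bounds its $X$-rank by its $Y$-rank $+1<\alpha+1$.
  \item It has the form $F\cup\{w\}$ with $w\in X\setminus Y$. Here Corollary~\ref{rank is cero for disconnected caso general} gives $\rk_X(F\cup\{w\})=0$.
\end{itemize}
Hence every realization of $F_T$ in $X$ has $X$-rank $<\alpha+1$. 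By the definition of rank, $\rk_X(F)\not\geq\alpha+2$, so $\rk_X(F)\leq\alpha+1$.

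The main point to check is the limit case of $\alpha$ in this induction: for $\alpha$ limit, the bound ``rank $<\alpha+1$'' still gives $\rk_X(F)\leq\alpha+1$, which is exactly what is required. No separate use of $\rk_Y(F)\geq\beta$ for $\beta<\alpha$ is needed, since the witness type $T$ is taken at level $\alpha$ directly.

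I expect the only real obstacle to be justifying the use of $H=\emptyset$ outside the literal kernel definition. I handle this by repeating the short induction above rather than citing Lemma~\ref{rank for graphs with kernels caso general} verbatim. Combining the two bounds gives $\rk_X(F)\in\{\rk_Y(F),\rk_Y(F)+1\}$.
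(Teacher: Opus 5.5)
Your argument is correct and follows the paper's intended derivation: the lower bound comes from Lemma~\ref{rank on substructures vs rank on the total}, and the upper bound comes from Lemma~\ref{rank for graphs with kernels caso general} with $H=\emptyset$, using Corollary~\ref{rank is cero for disconnected caso general} for realizations outside $Y$. Rerunning the induction instead of citing that lemma verbatim is a sensible precaution, since $\emptyset$ need not literally be a kernel of $X$; as you observe, the lemma's proof never uses the kernel decomposition.
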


\begin{theorem}\label{every rank with extra caso general}
For every $(\gamma, n) \in \omega_1 \times \omega$ such that $\gamma$ is a limit ordinal or $0$, there exists a structure $G_{\gamma+n}\in\sigma\cF$ such that $H_n$ is a kernel of $G_{\gamma+n}$ and:
    \begin{enumerate}[label=(\Roman*)]
        \item\label{1 de teo superprincipal} if $x,y\in H_n$ with $x,y\in X_i$ for some $i\in n$, then $x$ and $y$ are not adjacent in $G_{\gamma+n}$,
        \item\label{2 de teo superprincipal} for every $j \in \{1, \dots, n\}$ and for all sequences $\vec{x} = (x_i)_{i \in j} \in \prod_{i \in j} X_i$, we have $\rk_{G_{\gamma+n}}(\{x_i \mid i \in j\}) \geq \gamma + (n - j)$, and
        \item $\rk(G_{\gamma+n})=\gamma+n$.
    \end{enumerate}
\end{theorem}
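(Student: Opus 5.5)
The plan is to prove the theorem by transfinite induction on the limit-or-zero ordinal $\gamma$, with $n$ arbitrary at each stage. Throughout, $H_n$ is the kernel. The leaves are attached to $H_n$ only through transversals $\{x_i\mid i\in j\}$ with $(x_i)\in\prod_{i\in j}X_i$, and all attachments are free amalgamations. Free amalgamation gives three things at once. First, it adds no adjacency inside $H_n$, so \ref{1 de teo superprincipal} is inherited from property (4) of the construction of $H_n$. Second, distinct leaves are pairwise disconnected, so $H_n$ is indeed a kernel. Third, every vertex outside a leaf $Y\cup H_n$ is disconnected from $Y$, which is exactly the hypothesis of Lemmas~\ref{rank bounded by cardinality of kernel caso general} and \ref{rank for graphs with kernels caso general}.

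\emph{Base case $\gamma=0$.} Let $G_n$ be the free amalgam of $H_n$ with countably many isolated single vertices (empty type). Then \ref{2 de teo superprincipal} follows from Lemma~\ref{rank on the Hn graph caso general} together with Lemma~\ref{rank on substructures vs rank on the total}. For the upper bound, suppose $\rk(G_n)\geq n+1$. Lemma~\ref{continue with completes caso general} then gives a complete set of size $n+1$. Isolated vertices are adjacent to nothing, so this set lies in $H_n$, which contradicts Lemma~\ref{Hn does not contain big completes caso general}. The degenerate value $\gamma+n=0$ is handled separately.

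\emph{Limit case.} Fix a sequence $\gamma_k\nearrow\gamma$. For each transversal $F=\{x_i\mid i\in n\}$ and each $k$, take a structure $W_{F,k}=G_{\delta+m}$ already built by induction, with $\gamma_k\leq\delta+m<\gamma$ and $m\geq n$. By the remark that every $G\in\cF$ of size at most $m$ embeds into $H_m$ transversally, $W_{F,k}$ contains a copy of $F$ sitting on a transversal of its own kernel. By \ref{2 de teo superprincipal} for $W_{F,k}$, this copy has rank at least $\delta+(m-n)\geq\gamma_k$. Identify that copy with $F$ and freely amalgamate all the $W_{F,k}$ with $H_n$ over the corresponding $F$'s; call the result $G_{\gamma+n}$.

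Since $W_{F,k}\leq G_{\gamma+n}$, Lemma~\ref{rank on substructures vs rank on the total} gives $\rk_{G_{\gamma+n}}(F)\geq\gamma_k$ for all $k$, so $\rk_{G_{\gamma+n}}(F)\geq\gamma$. This is \ref{2 de teo superprincipal} for $j=n$. For $j<n$ we run downward induction on $n-j$, exactly as in Lemma~\ref{rank on the Hn graph caso general}: the vertices $v(\vec x,T)\in X_j$ still realize every prime extension. This yields \ref{2 de teo superprincipal}, and with $j=1$ it gives $\rk(G_{\gamma+n})\geq\gamma+n$.

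\emph{Upper bound.} Suppose $\rk(G_{\gamma+n})\geq\gamma+n+1$. Lemma~\ref{continue with completes caso general}, with $m=n+1$ and the limit $\gamma$, yields a complete set $C$ with $|C|=n+1$ and $\rk(C)\geq\gamma$. By \ref{1 de teo superprincipal} and Lemma~\ref{Hn does not contain big completes caso general}, $C\not\subseteq H_n$. Since leaves are pairwise disconnected, $C$ meets exactly one leaf $Y=W_{F,k}\setminus F$. Lemma~\ref{rank for graphs with kernels caso general} then gives
\[
\rk(C)\leq\rk_{Y\cup H_n}(C)+|H_n|+1,
\]
so it suffices to show $\rk_{Y\cup H_n}(C)<\gamma$; as $\gamma$ is a limit ordinal, a finite amount of slack costs nothing. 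By Corollary~\ref{rank function of a substructure is geq}, $\rk_{Y\cup H_n}(C)\leq\rk(Y\cup H_n)$.

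\emph{Main obstacle.} The remaining difficulty is bounding $\rk(Y\cup H_n)$. Here $Y\cup H_n$ is $W_{F,k}$ with the finitely many vertices of $H_n\setminus F$ freely added, and $\rk(W_{F,k})=\delta+m<\gamma$. I would prove an auxiliary lemma: adjoining one vertex $u$ to $Z\in\sigma\cF$ raises the rank of each $E\in\age(Z\cup\{u\})$ by at most a fixed finite constant. The proof would be an induction on $\rk_Z(E\setminus\{u\})$ modeled on Lemma~\ref{rank for graphs with kernels caso general}: realizations either stay inside $Z$, where the inductive hypothesis applies, or use $u$, where a complete-set argument via Lemma~\ref{continue with completes caso general} bounds the rank. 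Iterating the lemma $|H_n|$ times keeps the rank below $\gamma$. If this auxiliary lemma proves too weak, my fallback is to enlarge the leaves from the start: use $G_{\delta+m}$ built with kernel $H_{m}\supseteq$ a copy of $H_n$, so that $Y\cup H_n$ is itself a substructure of an earlier-constructed $G_{\delta+m'}$ with $\delta+m'<\gamma$.
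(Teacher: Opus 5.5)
Your proof is correct, but at the limit stage it takes a different route from the paper.

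The paper glues the earlier structures $G_{\beta+m}$ ($\beta<\gamma$ limit or $0$, $m>n$) along the \emph{whole} of $H_n$. It uses $H_n\le H_m\le G_{\beta+m}$, so a transversal of $H_n$ is already a transversal of $H_m$. As a result, each leaf together with the kernel is literally an earlier $G_{\beta+m}$. The upper bound then takes one line: pick $v\in C\setminus H_n$ and chain $\gamma\le\rk(C)\le\rk(\{v\})\le\rk_{G_{\beta+m}}(\{v\})+|H_n|+1<\gamma$ via Lemma~\ref{rank for graphs with kernels caso general}.

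You glue $W_{F,k}$ only along the transversal $F$. So your $Y\cup H_n$ is $W_{F,k}$ plus the vertices of $H_n\setminus F$, and this is what forces the auxiliary lemma. That lemma does hold, and your sketch proves it in your setting. Each added $u\in H_n\setminus F$ has all its neighbours inside $H_n$, so Lemma~\ref{continue with completes caso general} gives $\rk(E)<|H_n|$ for every $E\ni u$. The induction you describe then yields a finite additive constant per added vertex.

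As you phrased the lemma (an arbitrary $u$ adjoined to an arbitrary $Z$), however, the complete-set argument breaks down when $u$ has infinitely many neighbours. The general statement is still true, by FAP. Extend a good type of $D$ freely over $u$; this shows $\rk_{Z\cup\{u\}}(D\cup\{u\})\le\rk_Z(D)$, and hence $\rk_{Z\cup\{u\}}(E)\le\rk_Z(E)+1$ for $E\in\age(Z)$.

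Your fallback is essentially the paper's construction. Your route buys flexibility: a leaf need only contain some high-rank transversal copy of $F$, and no nesting of kernels is needed. It pays for this with an extra lemma, while the paper's route is more economical.

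Two small points. First, when choosing $W_{F,k}$ you need $\delta+(m-n)\ge\gamma_k$, not merely $\gamma_k\le\delta+m$; take $m$ larger, which is possible since $\gamma$ is a limit. Second, your base case with added isolated vertices repairs a technicality the paper glosses over. The finite $H_n$ is not a kernel of $G_n:=H_n$ under a definition that requires $\omega$ many nonempty leaves.
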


\begin{proof}
For $\gamma=0$, $G_n:=H_n$ satisfies the conditions by Lemma~\ref{rank on the Hn graph caso general} and Corollary~\ref{exact rank of Hn caso general}. Now let $\gamma\in\w_1$ be a limit ordinal and assume the result holds for all pairs $(\beta,m)\in\gamma\times\omega$ with $\beta$ a limit ordinal or $0$. For each $n\in\omega$, define
\[
G_{\gamma+n} =\bigoplus_{H_n}\bigl\{G_{\beta+m}\mid \beta<\gamma\text{ a limit ordinal or }0,\; m>n\bigr\}.\footnote{$\bigoplus_{H_n}$ denotes the free amalgamation over the common substructure $H_n$: the remainders $G_{\beta+m}\setminus H_n$ are taken pairwise disjoint (relabeling if necessary) and glued along a single shared copy of $H_n$, keeping the original relations within each $G_{\beta+m}$ and adding no relations between distinct remainders.}
\]

\textbf{Case $n=0$:} Conditions~\ref{1 de teo superprincipal} and~\ref{2 de teo superprincipal} are trivially true. Fix $v\in G_\gamma$. Then there is some limit $\beta<\gamma$ and $m>0$ with $v\in G_{\beta+m}$. By Corollary~\ref{1.11} and the inductive hypothesis,
\[
\rk_{G_{\beta+m}}(\{v\}) \le\rk(G_{\beta+m}) = \beta+m.
\]
By Corollary~\ref{rank on the total vs rank on the connected component for all finite substructures caso general},
\[
\rk_{G_\gamma}(\{v\}) \le\rk_{G_{\beta+m}}(\{v\})+1 \le\beta+m+1<\gamma.
\]
Every prime extension of $\emptyset$ in $G_\gamma$ has rank strictly less than $\gamma$, then $\rk(G_\gamma)\le\gamma$. On the other hand, by the inductive hypothesis and Corollary~\ref{1.13}, $\beta+m=\rk(G_{\beta+m})\le\rk(G_\gamma)$ for every $\beta<\gamma$ limit and $m>n$, whence $\rk(G_\gamma)=\gamma$.

\textbf{Case $n\ge1$:} Condition~\ref{1 de teo superprincipal} follows from the inductive hypothesis: if $x,y\in X_i$ for some $i\in n$, then $x$ is not adjacent to $y$ in $G_{\beta+m}$ for every  $\beta<\gamma$ limit and $m>n$. 

We prove Condition~\ref{2 de teo superprincipal} by induction on $l=n-j$, as in Lemma~\ref{rank on the Hn graph caso general}. The inductive step is identical. For the base case $j=n$: since $\{x_i\mid i\in n\}\subseteq H_n\subseteq G_{\beta+m}\leq G_{\gamma+n}$ for every $\beta<\gamma$ limit (or $0$) and $m>n$, the inductive hypothesis applied to $(\beta,m)$ gives $\rk_{G_{\beta+m}}(\{x_i\mid i\in n\})\geq\beta+(m-n)$; Lemma~\ref{rank on substructures vs rank on the total} then yields $\rk_{G_{\gamma+n}}(\{x_i\mid i\in n\})\geq\beta+(m-n)$, and the supremum over such $(\beta,m)$ is $\gamma$.

From that, we get $\rk_{G_{\gamma+n}}(\{v\})\ge\gamma+(n-1)$ for every $v\in X_0$, hence $\rk(G_{\gamma+n})\ge\gamma+n$.

To see $\rk(G_{\gamma+n})\le\gamma+n$, suppose $\rk(G_{\gamma+n})\ge\gamma+n+1$. By Lemma~\ref{continue with completes caso general}, there exists $G\le G_{\gamma+n}$ complete in $G_{\gamma+n}$ with $|G|=n+1$ and $\rk_{G_{\gamma+n}}(G)\ge\gamma$. If $G\subseteq H_n$, there would exist $x,y\in G$ with $x,y\in X_i$ for some $i\in n$, contradicting~\ref{1 de teo superprincipal}. Thus some $v\in G$ lies outside $H_n$, hence $v\in G_{\beta+m}\setminus H_n$ for some limit $\beta<\gamma$ and $m>n$. Then Corollary~\ref{rank function of a substructure is geq}, Lemma~\ref{rank for graphs with kernels caso general}, and $\rk(G_{\beta+m})=\beta+m$ give
\[
\gamma \le\rk_{G_{\gamma+n}}(G) \le\rk_{G_{\gamma+n}}(\{v\}) \le\rk_{G_{\beta+m}}(\{v\})+|H_n|+1 \le(\beta+m)+|H_n|+1<\gamma,
\]
a contradiction.
\end{proof}

\begin{corollary}\label{examples FAP FEP}
The following classes satisfy FAP and FEP, and therefore have the Rank Property:
\begin{enumerate}
    \item Finite graphs.
    \item Finite directed graphs (irreflexive digraphs).
    \item Finite $k$-uniform hypergraphs for any $k\geq 2$, i.e., 
    structures $(V,E)$ with $E\subseteq [V]^k$.
    $E\subseteq [V]^{<\w}$.
    \item Edge-colored versions of any of the above, i.e., 
    structures $(V,E_1,\dots,E_r)$ where the $E_i$ partition the 
    set of related tuples into color classes.
    \item More generally, finite structures in any finite 
    relational language with no additional axioms beyond 
    irreflexivity of non-unary relations.
\end{enumerate}
\end{corollary}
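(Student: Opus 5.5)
The corollary follows from Theorem~\ref{every rank with extra caso general}, so the plan is to verify its hypotheses for each listed class and then observe that every $\alpha<\w_1$ has the form $\gamma+n$. Indeed, every countable ordinal $\alpha$ can be written uniquely as $\alpha=\gamma+n$ with $\gamma$ a limit ordinal or $0$ and $n\in\w$. The theorem then yields $G_{\gamma+n}\in\sigma\cF$ with $\rk(G_{\gamma+n})=\gamma+n$, which gives RP. Throughout the section the language is assumed finite with at least one relation symbol, and each listed class has finitely many symbols. For the edge-colored case I also use a finite number $r$ of colors.

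It suffices to treat item (5), since items (1)--(4) are instances of it or are handled by the same argument. Let $\cF$ be the class of all finite $\cL$-structures whose non-unary relations are irreflexive.

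\emph{FAP.} Given $A\leq B,C$ in $\cF$, form the free amalgam $D$: take $B$ and $C$ with $B\cap C=A$, keep all relations of $B$ and of $C$, and add no tuple meeting both $B\setminus A$ and $C\setminus A$. Since irreflexivity is a universal condition on single tuples, and every tuple of $D$ already lies in $B$ or in $C$, we get $D\in\cF$. Since $B$ and $C$ are substructures of $D$, this is a valid amalgam.

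\emph{FEP.} Given $G\in\cF$ (assume $N\neq\emptyset$, otherwise fullness is vacuous only when $G=\emptyset$; this is the reason the arity-$\geq2$ hypothesis matters), pick $i\in N$. Choose the type with $A_i^0=\{(x,x,\dots,x):x\in G\}$? That choice violates irreflexivity. Instead, for each $x\in G$ I need a tuple containing $x$ and the new point $z$ with distinct entries. If $c_i=2$, the tuple $(z,x)$ works. If $c_i>2$ and $|G|<c_i-1$, there are not enough distinct elements, so fullness must be handled with care.

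Where exactly fullness is used shows the way out. It enters only through Lemma~\ref{continue with completes caso general}, which needs the adjacency $\simup{}$ as defined. Given this, I would interpret FEP for hypergraphs through adjacency within a single tuple and choose, for each $x\in G$, one tuple of $c_i$ distinct elements consisting of $z$, $x$, and further elements of $G$. This works whenever $|G|\geq c_i-1$.

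The main obstacle is the case of small $G$ with high arity, where $|G|<c_i-1$. In that case no full good type exists literally. I would address it by checking whether Lemma~\ref{continue with completes caso general} and the subsequent bounds only need FEP for $G$ of sufficient size. If they need it for all $G$, I would fall back on the case where some $c_i=2$, which covers items (1), (2) and (4) over graphs and digraphs directly. For $k$-uniform hypergraphs with $k>2$, the small cases would need a separate verification.

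For edge-colored structures, the colors are $E_1,\dots,E_r$ with the partition axiom. Free amalgamation and the full type that puts each tuple into a single color class both respect the partition, since that axiom is a per-tuple condition. The argument is therefore the same as above.
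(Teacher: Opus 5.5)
You have a genuine gap, although it overlaps with a flaw in the paper itself. Your FAP check matches the paper's one-line proof. So does your FEP check whenever some non-unary symbol is binary (put $(z,x)$ into that relation for each $x\in G$).

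Your worry about higher arities is correct, and the problem lies in the paper's own justification, not in your reading. Take the class of $k$-uniform hypergraphs with $k\geq 3$ and a $G$ with $1\le |G|\le k-2$, e.g.\ a single vertex when $k=3$. Then $G\cup\{z\}$ has fewer than $k$ points, so $G$ has no full $\cF$-good type. Hence FEP in the sense of Definition~\ref{FEP def} fails, and ``one can always add a vertex fully related to all existing elements'' is false there. Item (5) is outright false for a purely unary language: Section~\ref{sec rank and AP} shows such classes fail RP. So your assumption $N\neq\emptyset$ is a necessary correction, not a convenience. Still, as a proof your proposal stops short. Item (3) for $k\geq 3$, and (5) without a binary symbol, are deferred to a ``separate verification'' you never give, and your fallback only covers the binary cases.

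\emph{What a repair for hypergraphs needs.} FEP is used only through Lemma~\ref{continue with completes caso general}, and through it in Lemma~\ref{rank bounded by cardinality of kernel caso general}. Sets of size $\le k-2$ have a unique prime extension, so the first $k-1$ steps of that recursion are forced anyway. At size $k-1$, the type making $F_l\cup\{z\}$ a hyperedge is full \emph{and} makes all $k$ points pairwise adjacent. From size $k-1$ on, full good types exist.

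Hence $\rk_X(\emptyset)\geq\gamma+m$ with $m\geq k$ still yields a complete $m$-set of rank $\geq\gamma$. This gives the upper bounds in Corollary~\ref{exact rank of Hn caso general} and Theorem~\ref{every rank with extra caso general} for $n\geq k-1$. For $n\le k-2$, $H_n$ is just $n$ points with no edges, so $\rk(H_n)\le |H_n|=n$, and no $(n+1)$-set fits inside $H_n$.

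In Lemma~\ref{rank bounded by cardinality of kernel caso general}, first pad $F\cup\{x\}$ with at most $k-3$ arbitrary points. This only adds a constant to the bounds, which is harmless against a limit $\gamma$. Merely ``using FEP only for large $G$'' plus padding would lose the exact value $\rk(H_n)=n$. The single-hyperedge step is what keeps the count exact.

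\emph{The obstacle in (5) is real.} Take one unary $S$ and one $4$-ary $R$. Then $H_2$ consists of six points, three in $S$, with no $R$-tuples, and $\rk(H_2)=3$. So the conclusion of Corollary~\ref{exact rank of Hn caso general} fails in that class, and the paper's construction does not establish (5) there.
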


\begin{proof}
In each case, FAP holds because the free amalgam introduces no new relations between elements of distinct parts, and FEP holds because one can always add a vertex fully related to all existing elements.
\end{proof}

Notable Fra\"iss\'e classes that do \emph{not} satisfy both FAP and FEP include: $K_n$-free graphs for $n\geq 3$ (FAP holds but FEP fails, since adding a vertex adjacent to all others creates $K_n$); and finite linear orders and finite tournaments (FEP holds but FAP fails). For linear orders and tournaments, we prove RP by different methods in Sections~\ref{sec linear orders} and~\ref{tournaments section} respectively.

\section{Tournaments}\label{tournaments section}

A \emph{tournament} is a structure $(T,\to)$ where for every pair $\{u,v\}$ of distinct elements, exactly one of $u\to v$ or $v\to u$ holds. In this section, $\cF$ denotes the class of finite tournaments and $\sigma\cF$ the class of countable tournaments. The class $\cF$ has strong amalgamation\footnote{\textit{Strong Amalgamation Property} (SAP): if $A,B,C \in \mathcal{F}$ with $A\embed B$ and $A\embed C$, then there exist $D\in \mathcal{F}$ and embeddings $f\colon B\embed D$, $g\colon C\embed D$ such that the respective diagram commutes and $f(B)\cap g(C)=f(A)$.}, hence in particular AP, thus all results of Section~\ref{sec rank and AP} that require AP apply. The construction follows the same scheme as Section~\ref{FAP and FEP section} --- finite universal structures, kernel amalgamation, lower bounds by induction, upper bounds by controlling the rank of leaves --- but replaces the disconnection provided by FAP with two new tools: a Cross-Piece Bound on kernel amalgamations and a rigidity property of $H_n$.

For two tournaments $A$ and $B$, define $A+B$ as the tournament on $A\sqcup B$ with the original orientations within $A$ and $B$, and $a\to b$ for every $a\in A$ and $b\in B$. More generally, $\sum_{k<\w}A_k$ is defined on $\bigsqcup_{k<\w}A_k$ with the original orientations within each $A_k$ and $a\to b$ whenever $a\in A_i$, $b\in A_j$, and $i<j$.

In a tournament $T$, the types of prime extensions of $F=\{a_0,\dots,a_{m-1}\}\in\age(T)$ are coded by subsets $S\subseteq F$ and correspond to the extensions $F_S=F\cup\{z\}$ where $z\to a_i$ if and only if $a_i\in S$. There are $2^m$ such extensions, and all are tournaments.

\begin{lemma}\label{tournament continue with prescribed type}
Let $T\in\sigma\cF$, $F\in\age(T)$, $\gamma$ a limit ordinal or $0$, and $m\in\w$. Suppose $\rk_T(F)\geq\gamma+m$, and let $S\subseteq F$ be fixed. Then there exist $g_0,\dots,g_{m-1}\in T\setminus F$, pairwise distinct, such that for every $l\in\{0,\dots,m-1\}$ and every $v\in F$,
$$g_l\to v\iff v\in S,$$
and $\rk_T(F\cup\{g_0,\dots,g_{m-1}\})\geq\gamma$.
\end{lemma}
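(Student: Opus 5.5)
We argue as in Lemma~\ref{continue with completes caso general}, replacing the full type by a type chosen to respect $S$. The plan is to build recursively a chain $F=F_0\leq F_1\leq\dots\leq F_m$ in $\age(T)$, where $F_{k+1}=F_k\cup\{g_k\}$ is a prime extension of $F_k$ and $\rk_T(F_k)\geq\gamma+(m-k)$. Setting $k=m$ then gives $\rk_T(F\cup\{g_0,\dots,g_{m-1}\})\geq\gamma$. Distinctness of the $g_k$ and $g_k\notin F$ are automatic, because each $g_k$ is the new point of a prime extension of $F_k\supseteq F\cup\{g_0,\dots,g_{k-1}\}$.

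\textbf{Recursive step.} Suppose $F_k$ has been constructed with $\rk_T(F_k)\geq\gamma+(m-k)$ and $k<m$. We consider the prime extension $(F_k)_{S'}$ of $F_k$ coded by the subset $S'\subseteq F_k$, where
$$S'=S\cup\{g_0,\dots,g_{k-1}\}.$$
The part $S'\cap F=S$ is what matters; the orientation toward the earlier $g_i$'s may be fixed arbitrarily, and this choice is harmless. Since every subset of $F_k$ codes a tournament, $(F_k)_{S'}\in\cF$.

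Because $\gamma$ is a limit ordinal or $0$ and $m-k\geq1$, we can write $\gamma+(m-k)=(\gamma+(m-k-1))+1$. Applying the successor clause of Definition~\ref{2:definition_rankFunction}, we obtain a realization $F_{k+1}=F_k\cup\{g_k\}$ of $(F_k)_{S'}$ in $T$ with $\rk_T(F_{k+1})\geq\gamma+(m-k-1)$. Since $F_{k+1}$ is isomorphic to $(F_k)_{S'}$ over $F_k$, we get $g_k\to v$ if and only if $v\in S$, for every $v\in F$.

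\textbf{Where care is needed.} The only delicate point is the ordinal bookkeeping. We need $\gamma+(m-k)$ to be a successor ordinal whenever $m-k\geq1$, which holds precisely because $\gamma$ is a limit or $0$ and $m-k$ is finite, so that ordinal addition reduces to counting down. No amalgamation and no use of Proposition~\ref{rank of substructure geq than the rank of its prime extensions} is required.
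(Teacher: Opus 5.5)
Your proof is correct and is essentially the paper's argument unrolled into a recursion: the paper inducts on $m$, realizing the extension coded by $S$ once and then applying the induction hypothesis to $F\cup\{g_0\}$ with the same $S$. This makes earlier $g_i$'s beat later ones, whereas your $S'$ makes later ones beat earlier ones, which is an equally harmless choice. One small correction to your closing remark: $\gamma+(n+1)=(\gamma+n)+1$ holds for every ordinal $\gamma$ by the definition of ordinal addition, so the hypothesis that $\gamma$ is a limit or $0$ plays no role in this step.
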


\begin{proof}
By induction on $m$. The case $m=0$ is trivial. For $m=m_0+1$: since $\rk_T(F)\geq\gamma+m_0+1$, the prime extension of $F$ encoded by $S$ has a realization $F\cup\{g_0\}$ with $\rk_T(F\cup\{g_0\})\geq\gamma+m_0$, where $g_0\to v\iff v\in S$ for $v\in F$. Apply the inductive hypothesis to $F'=F\cup\{g_0\}$ and $S$, obtaining $g_1,\dots,g_{m_0}\in T\setminus F'$. Each $g_l$ for $l\geq 1$ satisfies $g_l\to v\iff v\in S$ for $v\in F$, and $\rk_T(F\cup\{g_0,\dots,g_{m_0}\})\geq\gamma$.
\end{proof}

We now carry out the analogous construction for tournaments. We construct recursively finite sets $(X_j)_{j\in\w}$ and tournaments $(H_n)_{n\in\w}$. Label $X_j=\{v_{j,s}\mid s\in 2^j\}$ (binary strings of length $j$), then $|X_j|=2^j$. The domain of $H_n$ is $\bigcup_{j\in n}X_j$, with $|H_n|=2^n-1$. The orientations are: (1)~for $j>i$: $v_{j,s}\to v_{i,t}$ if and only if $s(i)=1$; (2)~within the same level: $v_{j,s}\to v_{j,t}$ if and only if $s<_{\mathrm{lex}}t$. Note that $H_n\leq H_{n+1}$ for all $n$.

\begin{remark}\label{tournament covering}
For any $\vec{x}=(x_0,\dots,x_{j-1})\in\prod_{i\in j}X_i$ and any type $S\subseteq\{x_0,\dots,x_{j-1}\}$, the extension of $\vec{x}$ by the vertex $v_{j,\chi_S}\in X_j$ (where $\chi_S$ is the characteristic function of $\defset{i}{x_i\in S}$) realizes the extension of $\vec{x}$ coded by $S$. This is because $v_{j,\chi_S}\to x_i$ if and only if $\chi_S(i)=1$ if and only if $x_i\in S$, independently of which $x_i\in X_i$ is chosen.
\end{remark}

Note that, unlike the construction in Section~\ref{FAP and FEP section} where a fresh vertex is created for each sequence $\vec{x}$ and each type, here the vertex $v_{j,\chi_S}$ realizes type $S$ for \emph{every} choice of representatives \mbox{$(x_i)_{i\in j}\in\prod_{i\in j}X_i$} simultaneously. This is what makes the tournament construction thus compact: $|H_n|=2^n-1$, compared to the much larger universal structures of Section~\ref{FAP and FEP section}. This reuse of vertices is possible because tournament types are determined by a subset $S\subseteq F$, which depends only on the levels of the elements, not on the specific representatives chosen. For relations of arity $\geq 3$, types involve tuples from multiple levels and this simultaneous realization is no longer possible, which is why the general construction requires fresh vertices.

\begin{lemma}\label{tournament lower bound Hn}
For every $n\geq 1$, for all $j\leq n$ and all $(x_i)_{i\in j}\in\prod_{i\in j}X_i$, we have $\rk_{H_n}(\{x_i\mid i\in j\})\geq n-j$. In particular, $\rk(H_n)\geq n$.
\end{lemma}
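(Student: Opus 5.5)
We fix $n\geq 1$ and argue by downward induction on $j$, i.e.\ by induction on $l=n-j$, in the same way as Lemma~\ref{rank on the Hn graph caso general}. For the base case $l=0$ (that is, $j=n$), there is nothing to prove, since every finite substructure has rank $\geq 0$. We must only check that the set $\{x_i\mid i\in j\}$ really has $j$ elements and lies in $\age(H_n)$. This is immediate because the levels $X_i$ are pairwise disjoint, so the $x_i$ are distinct.

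For the inductive step, assume the bound holds for some $j$ with $1\leq j\leq n$, for all sequences in $\prod_{i\in j}X_i$. Fix $\vec{x}=(x_i)_{i\in j-1}\in\prod_{i\in j-1}X_i$ and put $F=\{x_i\mid i\in j-1\}$. Every prime extension of $F$ in $\cF$ is isomorphic over $F$ to some $F_S$ with $S\subseteq F$. By Remark~\ref{tournament covering}, the vertex $v:=v_{j-1,\chi_S}\in X_{j-1}$ satisfies $v\to x_i$ if and only if $x_i\in S$. Hence $F\cup\{v\}$ is a realization of $F_S$ in $H_n$; this uses $j-1<n$, so that $X_{j-1}\subseteq H_n$.

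Next, set $x_{j-1}:=v$. Then the extended sequence $(x_i)_{i\in j}$ belongs to $\prod_{i\in j}X_i$, and the induction hypothesis gives $\rk_{H_n}(F\cup\{v\})\geq n-j$. Since $S$ was arbitrary, every prime extension of $F$ has a realization of rank $\geq n-j$. By the successor clause of Definition~\ref{2:definition_rankFunction}, this yields $\rk_{H_n}(F)\geq n-j+1=n-(j-1)$, which completes the induction.

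For the final assertion, apply the statement with $j=0$: the empty sequence gives $F=\emptyset$, so $\rk(H_n)=\rk_{H_n}(\emptyset)\geq n$. The only point needing care is the indexing. Remark~\ref{tournament covering} is stated for sequences of length $j$ realized in $X_j$, and we apply it with $j-1$ in place of $j$. The orientation rule $v_{j,s}\to v_{i,t}\iff s(i)=1$ depends only on the level $i$, which is exactly why one vertex serves every choice of representatives. I expect no real obstacle.
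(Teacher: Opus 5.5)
Your proof is correct and is essentially the paper's argument. The paper says the proof is identical to that of Lemma~\ref{rank on the Hn graph caso general} (induction on $l=n-j$), with Remark~\ref{tournament covering} replacing property~(5). That is exactly what you carry out, including the index shift that puts the realizing vertex $v_{j-1,\chi_S}$ in $X_{j-1}\subseteq H_n$, outside $F$.
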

\begin{proof}
The proof is identical to that of Lemma~\ref{rank on the Hn graph caso general}, using Remark~\ref{tournament covering} in place of property~(5) of the construction in Section~\ref{FAP and FEP section}.
\end{proof}

\begin{theorem}\label{tournament rank Hn}
For every $n\geq 1$, $\rk(H_n)=n$.
\end{theorem}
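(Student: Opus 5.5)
The lower bound $\rk(H_n)\geq n$ is Lemma~\ref{tournament lower bound Hn}, so what remains is $\rk(H_n)\leq n$. I would prove it via the rank game (Proposition~\ref{rank via game}), or more precisely by its finite version: $\rk_{H_n}(F)\leq k$ holds as soon as Player~\textsf{I} can choose prime extensions so that \textsf{II} is stuck within $k+1$ rounds starting from $F$. So the goal is a strategy for \textsf{I} that defeats \textsf{II} in at most $n+1$ rounds from $\emptyset$.

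\textbf{Key structural fact.} For a vertex $z=v_{j,s}$ and a vertex $x=v_{i,t}$ of $H_n$, the orientation between them is determined as follows:
\begin{itemize}
\item by $s(i)$ if $i<j$;
\item by $t(j)$ if $i>j$;
\item by the lexicographic order if $i=j$.
\end{itemize}

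\textbf{Final round.} Suppose the current position is $F=\{x_0,\dots,x_{n-1}\}$ with $x_i\in X_i$, one vertex per level. I would have \textsf{I} play the type $S$ defined as follows.
\begin{itemize}
\item For $1\leq i\leq n-1$, put $x_i\in S$ (that is, $z\to x_i$) if and only if $x_i(i-1)=1$.
\item Decide whether $x_0\in S$ last, as described below.
\end{itemize}
I claim no realization $z$ exists. Suppose $z$ lies at level $j\leq n-2$. The true orientation gives $x_{j+1}\to z$ if and only if $x_{j+1}(j)=1$. The required orientation is the opposite, so this fails. Suppose instead $z$ lies at level $n-1$, say $z=v_{n-1,s}$. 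Then $s(i)$ is forced for every $i<n-1$, so $s$ is completely determined. If $s=x_{n-1}$'s string then $z=x_{n-1}\in F$, which is impossible. Otherwise the lex comparison with $x_{n-1}$ is fixed. Since the requirement on $x_{n-1}$ has already been set by the rule above, I expect a little care is needed here. Concretely, I would reserve the freedom in the choice on $x_0$, or swap which index absorbs the top level, so that this last comparison is also contradicted. Checking $n=1,2$ by hand should settle the exact bookkeeping. The conclusion is that a transversal of all $n$ levels has rank $0$.

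\textbf{Reaching a transversal.} It remains to show that in $n$ rounds \textsf{I} can force \textsf{II}'s vertices to occupy $n$ distinct levels, or else make \textsf{II} lose earlier. This is the step I expect to be the main obstacle. My first attempt is a potential argument. Let $\ell(F)$ be the number of levels met by $F$. I would prove by downward induction that $\rk_{H_n}(F)\leq n-\ell(F)$ whenever $F$ has at most one vertex per level. For an $F$ with a repeated level, I would show directly that its rank is even smaller.

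\textbf{Separating vertices on a common level.} The tool for the repeated-level case is the following. Take $a\lessdot b$ in $X_i$, meaning $a=v_{i,t}$ and $b=v_{i,t'}$ with $t<_{\mathrm{lex}}t'$. Let \textsf{I} demand $z$ with $b\to z\to a$. Then $z$ must sit either at level $i$ strictly lex-between $t$ and $t'$, or at a lower level $k$ where $t(k)\neq t'(k)$. Iterating this separation shrinks the available room. Both the lex interval and the set of distinguishing lower bits shrink, and counting them should yield the needed bound. I would first try to show that after such a demand the new position has strictly fewer ``free'' levels than before. Since the effective budget is only $n$, this reduces the problem to the transversal case above.
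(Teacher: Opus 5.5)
Your lower bound is fine, and recasting the upper bound as a finite-horizon rank game is legitimate, but the proposal does not prove $\rk(H_n)\le n$. Both the transversal step and the repeated-level step are left open. Moreover, the explicit rule you give for the transversal step fails as stated, and the choice on $x_0$ cannot repair it. Take $n=3$, $x_1=v_{1,0}$, $x_2=v_{2,11}$. Your rule gives $x_1\notin S$ and $x_2\in S$. If $x_0\in S$, then $v_{2,10}$ realizes $F_S$: it beats $x_0$ since $s(0)=1$, loses to $x_1$ since $s(1)=0$, and beats $x_2$ since $10<_{\mathrm{lex}}11$. If $x_0\notin S$, then $v_{2,00}$ realizes it. The ``separating vertices on a common level'' step is only a heuristic; no bound is actually established there.

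The missing idea is a counting argument that makes the level structure irrelevant. In a tournament, each vertex $z\notin F$ realizes exactly one of the $2^{|F|}$ prime extensions of $F$, namely $F_S$ with $S=\{a\in F\mid z\to a\}$. Since $|H_n|=2^n-1$, any $F\in\age(H_n)$ with $|F|=n$ leaves only $2^n-1-n<2^n$ candidate vertices. So some $F_S$ is unrealized and $\rk_{H_n}(F)=0$, whether or not $F$ is a transversal. Downward induction on $|F|$ then gives $\rk_{H_n}(F)\le n-|F|$, because every realization of a prime extension of $F$ has size $|F|+1$. Taking $F=\emptyset$ yields $\rk(H_n)\le n$. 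In game terms: whatever \textsf{II} plays, after $n$ rounds the position is an $n$-element set, and \textsf{I} wins in round $n+1$ by pigeonhole. This is the paper's argument, and it replaces both your transversal analysis and your separation step.
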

\begin{proof}
The lower bound is Lemma~\ref{tournament lower bound Hn}. For the upper bound, we show by downward induction on $j$ that $\rk_{H_n}(F)\leq n-j$ for every $F\in\age(H_n)$ with $|F|=j$. For $j=n$: $F$ has $2^n$ prime extensions in $\mathcal{F}$, but only $|H_n|-n=2^n-1-n<2^n$ vertices remain, each extending $F$ in exactly one prime extension, then some prime extension of $F$ in $\mathcal{F}$ is unrealizable and $\rk_{H_n}(F)=0$. For $j<n$: every realization of a prime extension of $F$ has size $j+1$, thus by the inductive hypothesis its rank is at most $ n-(j+1)$, giving $\rk_{H_n}(F)\leq n-j$. With $j=0$: $\rk(H_n)\leq n$.
\end{proof}

One key combinatorial fact replacing the disconnection provided by FAP is the following:
\begin{lemma}[Cross-Piece Lemma]\label{tournament cross piece}
Let $T=A+B$ be a tournament. If $F\cap A\neq\emptyset$ and $z\in B\setminus F$, then $\rk_T(F\cup\{z\})=0$.
\end{lemma}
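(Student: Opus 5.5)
By Remark~\ref{characterization of rank 0}, it suffices to exhibit a prime extension of $F\cup\{z\}$ in $\cF$ that has no realization in $T$. Every subset $S\subseteq F\cup\{z\}$ codes a tournament extension, so the only task is to pick an $S$ whose required orientations are incompatible with the block structure of $A+B$.

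Fix $a\in F\cap A$; by hypothesis $z\in B$. In $T=A+B$ we have $a\to z$, and every vertex of $A$ beats every vertex of $B$. Consider the extension $(F\cup\{z\})\cup\{w\}$ with $z\to w$ and $w\to a$, i.e. the one coded by any $S$ with $a\in S$ and $z\notin S$ (the choices on the rest of $F$ are irrelevant). This is a tournament, hence lies in $\cF$.

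Suppose $w\in T\setminus(F\cup\{z\})$ realizes it. If $w\in A$, then $w\to z$ because $z\in B$, contradicting $z\to w$. If $w\in B$, then $a\to w$ because $a\in A$, contradicting $w\to a$. Hence the extension is unrealizable, and $\rk_T(F\cup\{z\})=0$.

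I do not expect any real obstacle here. The only point needing care is the choice of extension: the new vertex must lose to an element of $B$ and beat an element of $A$, which the ordering in $A+B$ rules out.
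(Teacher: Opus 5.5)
Your proof is correct and rests on the same key idea as the paper: the extension in which the new vertex $w$ satisfies $z\to w$ and $w\to a$ cannot be realized, because every vertex of $A$ beats every vertex of $B$. The only difference is packaging. The paper shows $\rk_T(\{a,z\})=0$ and then passes to $F\cup\{z\}$ via the monotonicity Corollary~\ref{rank function of a substructure is geq}, which relies on AP. You instead extend the bad type directly to all of $F\cup\{z\}$, which makes that corollary unnecessary.
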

\begin{proof}
Let $a\in F\cap A$. The prime extension of $\{a,z\}$ coded by $S=\{a\}$ (i.e., ``$w\to a$ and $z\to w$'') is unrealizable: for $w\in A$, $z\to w$ fails (since $w\to z$ by the sum); for $w\in B$, $w\to a$ fails (since $a\to w$ by the sum). Thus $\rk_T(\{a,z\})=0$, and by Corollary~\ref{rank function of a substructure is geq}, $\rk_T(F\cup\{z\})=0$.
\end{proof}

\begin{lemma}[Localization Lemma]\label{tournament localization}
Let $T=A+B$ be a tournament. For every $F\subseteq A$ with $|F|\geq 1$,
$$\rk_T(F)\leq\rk_A(F)+1.$$
\end{lemma}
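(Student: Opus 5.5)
I would argue by induction on $\alpha=\rk_A(F)$, ranging over all nonempty $F\subseteq A$ simultaneously, in the same way as Lemma~\ref{rank for graphs with kernels caso general}. The target is the statement ``$\rk_A(F)\le\alpha$ implies $\rk_T(F)\le\alpha+1$''. If $\rk_A(F)=\infty$ there is nothing to prove, so assume $\alpha<\infty$. The key observation is that every realization of a prime extension of $F$ in $T$ has one of two forms: $F\cup\{w\}$ with $w\in A$, or $F\cup\{w\}$ with $w\in B$. In the second form, Lemma~\ref{tournament cross piece} applies, since $F\cap A\neq\emptyset$ and $w\in B\setminus F$. Hence every realization whose new point lies in $B$ has rank $0$ in $T$.

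\textbf{Base case $\alpha=0$.} By Remark~\ref{characterization of rank 0}, some prime extension $F_S$ of $F$ has no realization in $A$. Every realization of $F_S$ in $T$ therefore uses a point of $B$, and so has $\rk_T=0$ by the Cross-Piece Lemma. Consequently $\rk_T(F)\not\geq 2$, that is, $\rk_T(F)\leq 1$.

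\textbf{Inductive step.} Suppose $\rk_A(F)=\alpha$ and the claim holds below $\alpha$. Since $\rk_A(F)\not\geq\alpha+1$, there is a subset $S\subseteq F$ such that every realization of $F_S$ in $A$ has $\rk_A$-rank strictly less than $\alpha$. Now take any realization $J$ of $F_S$ in $T$. If $J\subseteq A$, then $\rk_A(J)=\beta<\alpha$ and $J$ is nonempty, so the induction hypothesis gives $\rk_T(J)\le\beta+1<\alpha+1$. If instead $J$ contains a point of $B$, the Cross-Piece Lemma gives $\rk_T(J)=0<\alpha+1$. So every realization of $F_S$ in $T$ has rank below $\alpha+1$, which means $\rk_T(F)\not\geq\alpha+2$, i.e.\ $\rk_T(F)\le\alpha+1$.

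One subtlety needs care. For $\beta<\alpha$ with $\alpha$ a limit, we have $\beta+1<\alpha<\alpha+1$, so the strict inequality survives. For $\alpha=\beta'+1$, we have $\beta\le\beta'$, hence $\beta+1\le\alpha<\alpha+1$. In both cases the bound $\rk_T(J)<\alpha+1$ holds, and it is exactly what is needed to rule out $\rk_T(F)\geq\alpha+2$.

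The main obstacle is just this bookkeeping: making the limit and successor cases fit one uniform argument, and checking that the Cross-Piece Lemma genuinely covers every realization that leaves $A$. This is where the hypothesis $|F|\geq1$ is needed, since it supplies the point of $F\cap A$ that the Cross-Piece Lemma requires. No amalgamation beyond Corollary~\ref{rank function of a substructure is geq} (already used inside the Cross-Piece Lemma) is required.
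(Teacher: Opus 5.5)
Your proposal is correct and is essentially the paper's proof. Both argue by induction on $\alpha=\rk_A(F)$: choose $S\subseteq F$ whose realizations in $A$ all have rank below $\alpha$, then split the realizations $F\cup\{w\}$ in $T$ into $w\in A$ (handled by the induction hypothesis) and $w\in B$ (rank $0$ by the Cross-Piece Lemma). The only difference is that you treat $\alpha=0$ as a separate base case, while the paper's single argument covers it vacuously.
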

\begin{proof}
By induction on $\alpha=\rk_A(F)$. Since $\rk_A(F)=\alpha$, there exists an $S_0\subseteq F$ such that every $F\cup \{z\}$ prime extension of $F$ coded by $S_0$ and realized in $A$ has rank strictly below $\alpha$. If $z\in A$: $\rk_A(F\cup\{z\})<\alpha$, and by the inductive hypothesis, $\rk_T(F\cup\{z\})\leq\rk_A(F\cup\{z\})+1<\alpha+1$. If $z\in B$: $\rk_T(F\cup\{z\})=0$ by Lemma~\ref{tournament cross piece}. thus $\rk_T(F)\leq\alpha+1$.
\end{proof}

\begin{corollary}\label{tournament rank of sums}
For any two tournaments $A$ and $B$, $\rk(A+B)\leq\max(\rk(A),\rk(B))+2$.
\end{corollary}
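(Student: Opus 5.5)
We bound $\rk(A+B)=\rk_{A+B}(\emptyset)$ by examining the one-point extensions of $\emptyset$. There is only one prime extension of the empty tournament: a single vertex, coded by $S=\emptyset$. Its realizations in $T=A+B$ are the singletons $\{z\}$ with $z\in A$ or $z\in B$. So it suffices to show that every singleton satisfies
\[
\rk_T(\{z\})\le\max(\rk(A),\rk(B))+1.
\]
Granting this, $\rk_T(\emptyset)\not\geq\max(\rk(A),\rk(B))+3$, since that would require some realization of rank at least $\max+2$. Hence $\rk(A+B)\le\max(\rk(A),\rk(B))+2$. If either rank is $\infty$ the statement is trivial, so we may assume both are countable ordinals. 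The degenerate case where $A$ or $B$ is empty is also immediate, because then $T$ equals the other summand.

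For $z\in A$, we apply the Localization Lemma (Lemma~\ref{tournament localization}) to $F=\{z\}$. This gives $\rk_T(\{z\})\le\rk_A(\{z\})+1$. By Corollary~\ref{1.11}, valid since tournaments have AP, $\rk_A(\{z\})\le\rk(A)$, so $\rk_T(\{z\})\le\rk(A)+1$.

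For $z\in B$ we need the mirror statement: for $F\subseteq B$ with $|F|\ge1$, $\rk_{A+B}(F)\le\rk_B(F)+1$. The argument follows Lemma~\ref{tournament localization} with the roles reversed, and rests on a mirrored Cross-Piece Lemma: if $F\cap B\neq\emptyset$ and $z'\in A\setminus F$, then $\rk_T(F\cup\{z'\})=0$. To prove it, take $b\in F\cap B$ and consider the extension of $\{z',b\}$ by a vertex $w$ with $b\to w$ and $w\to z'$, that is, the type $S=\{z'\}$. If $w\in A$, then $w\to b$, contradicting $b\to w$. If $w\in B$, then $z'\to w$, contradicting $w\to z'$. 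So this extension is unrealizable, and Corollary~\ref{rank function of a substructure is geq} gives $\rk_T(F\cup\{z'\})=0$.

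Alternatively, pass to the reversed tournament $T^{\ast}$. Reversing all arrows maps $A+B$ to $B^{\ast}+A^{\ast}$ and preserves ranks, because it bijects prime extensions and realizations; the $B$ case then becomes the $A$ case already proved.

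With the mirrored Cross-Piece Lemma in hand, the localization induction runs exactly as in Lemma~\ref{tournament localization}. Then Corollary~\ref{1.11} gives $\rk_T(\{z\})\le\rk(B)+1$. The only real obstacle is this symmetry step; the rest is a direct assembly of the earlier lemmas.
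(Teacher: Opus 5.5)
Your proof is correct and follows the same route as the paper: bound $\rk_{A+B}(\{v\})$ by the Localization Lemma plus Corollary~\ref{1.11}, then take one more step up to the empty set. Your mirrored Cross-Piece Lemma (or, alternatively, the reversal $(A+B)^{\ast}=B^{\ast}+A^{\ast}$) simply makes explicit the paper's one-line ``similarly for $v\in B$''.
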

\begin{proof}
Let $\delta=\max(\rk(A),\rk(B))$. For $v\in A$: $\rk_{A+B}(\{v\})\leq\rk_A(\{v\})+1\leq\delta+1$ by Lemma~\ref{tournament localization} and Corollary~\ref{1.11}. Similarly for $v\in B$. Thus $\rk(A+B)\leq\delta+2$.
\end{proof}

\begin{lemma}\label{tournament finite concatenation}
Let $\gamma$ be a limit ordinal. If $M_0,\dots,M_r$ are tournaments with $\rk(M_i)<\gamma$ for all $i\leq r$, then $\rk(M_0+\dots+M_r)<\gamma$.
\end{lemma}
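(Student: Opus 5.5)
The plan is to induct on $r$, using Corollary~\ref{tournament rank of sums} to add one summand at a time. The case $r=0$ is trivial. For the inductive step, write
\[
M_0+\dots+M_r=(M_0+\dots+M_{r-1})+M_r .
\]
Sums of tournaments are associative: in both bracketings every element of an earlier piece points to every element of a later piece. So this decomposition is legitimate.

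By the inductive hypothesis, $\rk(M_0+\dots+M_{r-1})<\gamma$, and by assumption $\rk(M_r)<\gamma$. Let $\delta$ be the maximum of these two ranks; then $\delta<\gamma$. Corollary~\ref{tournament rank of sums} gives
\[
\rk(M_0+\dots+M_r)\leq\delta+2 .
\]
Since $\gamma$ is a limit ordinal and $\delta<\gamma$, we have $\delta+1<\gamma$ and hence $\delta+2<\gamma$. This closes the induction.

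One point needs care. The corollary's proof takes $\delta$ to be an ordinal, so the ranks involved must be finite ordinals rather than $\infty$. Here they are, because $\rk(\cdot)<\gamma<\infty$. No other obstacle arises: the whole content is the additive bound $+2$ of Corollary~\ref{tournament rank of sums} together with the closure of limit ordinals under finitely many successor steps.

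If one wanted a bound independent of the induction, the same argument yields the explicit estimate
\[
\rk(M_0+\dots+M_r)\leq\max_i\rk(M_i)+2r ,
\]
which is again $<\gamma$.
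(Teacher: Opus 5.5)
Your proof is correct and is essentially the paper's argument: induction on the number of summands, splitting off the last one, applying Corollary~\ref{tournament rank of sums}, and using $\delta+2<\gamma$ because $\gamma$ is a limit. One small wording slip: the ranks only need to be genuine ordinals (not $\infty$), not \emph{finite} ordinals, since $\gamma$ may exceed $\omega$.
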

\begin{proof}
By induction on $r$: for $r+1$, Corollary~\ref{tournament rank of sums} gives $\rk(M_0+\dots+M_{r+1})\leq\max(\rk(M_0+\dots+M_r),\rk(M_{r+1}))+2<\gamma$.
\end{proof}

The kernel amalgamation for tournaments plays the role of the free amalgamation over $H_n$ in Section~\ref{FAP and FEP section}.

\begin{mydef}\label{tournament kernel amalgamation}
Let $H\in\cF$ be a tournament and $\{T_k\}_{k<\w}$ a countable family of countable tournaments, each containing $H$ as a sub-tournament. The \emph{kernel amalgamation} $\bigoplus_H\{T_k\}_{k<\w}$ is the tournament on $H\cup\bigsqcup_{k<\w}(T_k\setminus H)$ with the original orientation within each $T_k$, and $u\to w$ for $u\in T_j\setminus H$ and $w\in T_k\setminus H$, with $j<k$. We call the tournaments $T_k\setminus H$ for $k\in\omega$ the \emph{leaves} of the kernel amalgamation.
\end{mydef}

\begin{lemma}[Cross-Piece Bound]\label{tournament cross piece bound}
Let $T=\bigoplus_H\{T_k\}_{k<\w}$ be a kernel amalgamation. For every $a\in T_k\setminus H$ and every $x\in T_{k'}\setminus H$ with $k\neq k'$,
$$\rk_T(\{a,x\})\leq|H|.$$
\end{lemma}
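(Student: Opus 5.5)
We argue by contradiction, mirroring Lemma~\ref{rank bounded by cardinality of kernel caso general}. Set $n:=|H|$ and assume without loss of generality that $k<k'$; the case $k>k'$ is symmetric after exchanging the roles of $a$ and $x$. In the kernel amalgamation we then have $a\to x$. Suppose $\rk_T(\{a,x\})\geq n+1$. Apply Lemma~\ref{tournament continue with prescribed type} with $F=\{a,x\}$, $\gamma=0$, $m=n+1$, and a type $S\subseteq\{a,x\}$ chosen so that no vertex outside $H$ can realize it. This yields $n+1$ pairwise distinct vertices $g_0,\dots,g_n\in T\setminus\{a,x\}$, each realizing $S$ over $\{a,x\}$. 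If we can show that every such realizer lies in $H$, we get $n+1\leq|H|=n$, a contradiction.

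The key step is choosing $S$. Take $S=\{a\}$, so that $g\to a$ and $x\to g$. We now locate a vertex $g$ with these two properties.
\begin{itemize}
\item If $g\in T_j\setminus H$ with $j<k$, then $g\to a$ holds, but $g\to x$ also holds because $j<k'$. So $x\to g$ fails.
\item If $j>k$, then $a\to g$, so $g\to a$ fails.
\item If $j=k$, then $g\in T_k\setminus H$ with $g\neq a$. Since $k<k'$, the orientation between leaves gives $g\to x$, so $x\to g$ fails.
\end{itemize}
Hence every realization of $S$ lies in $H$, because orientations between $H$ and the leaves are unconstrained. This is precisely the argument of the Cross-Piece Lemma~\ref{tournament cross piece}, except that the kernel $H$ now supplies a bounded reservoir of realizers.

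For the case $k>k'$ we instead take $S=\{x\}$, with realizers $g$ satisfying $g\to x$ and $a\to g$. The same case check on the leaf index $j$ of $g$ shows that no leaf vertex works, where $g\in T_{k'}\setminus H$ fails because $a\to g$ is false. The main point to verify carefully is the bookkeeping of these three leaf cases ($j<\min$, $j$ equal to one of $k,k'$, $j>\max$). In particular, a vertex in the same leaf as $a$ or as $x$ is excluded by the inter-leaf orientation with the other endpoint.

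With all realizers confined to $H$, the $n+1$ distinct vertices from Lemma~\ref{tournament continue with prescribed type} cannot fit in $H$. Therefore $\rk_T(\{a,x\})\leq|H|$.
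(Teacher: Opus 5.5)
Your proposal is correct and follows essentially the paper's proof: assume $k<k'$, apply Lemma~\ref{tournament continue with prescribed type} with $F=\{a,x\}$, $\gamma=0$, $m=|H|+1$, $S=\{a\}$, and rule out every leaf location for the witnesses, which forces $|H|+1$ distinct witnesses into $H$. Your case $j>k$ merges the paper's three cases $k<j<k'$, $j=k'$, $j>k'$, and your separate treatment of $k>k'$ is redundant given the symmetry but harmless.
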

\begin{proof}
Without loss of generality $k<k'$, thus $a\to x$ in $T$. Suppose, for contradiction, that $\rk_T(\{a,x\})\geq|H|+1$. By Lemma~\ref{tournament continue with prescribed type} applied to $F=\{a,x\}$, $\gamma=0$, $m=|H|+1$, and $S=\{a\}$, there exist pairwise distinct $w_0,\dots,w_{|H|}\in T\setminus\{a,x\}$ with $w_l\to a$ and $x\to w_l$ for every $l$.
We claim $w_l\in H$ for every $l$. Let $w$ be any of these witnesses. We rule out each location outside $H$:
\begin{itemize}
\item If $w\in T_j\setminus H$ with $j<k$: then $w\to x$ by the kernel convention, contradicting $x\to w$.
\item If $w\in T_k\setminus(H\cup\{a\})$: then $w\to x$ by the kernel convention, contradicting $x\to w$.
\item If $w\in T_j\setminus H$ with $k<j<k'$: then $a\to w$ by the kernel convention, contradicting $w\to a$.
\item If $w\in T_{k'}\setminus(H\cup\{x\})$: then $a\to w$ by the kernel convention, contradicting $w\to a$.
\item If $w\in T_j\setminus H$ with $j>k'$: then $a\to w$ by the kernel convention, contradicting $w\to a$.
\end{itemize}
So $w\in H$. But then $|H|+1$ distinct elements lie in $H$, which is impossible.
\end{proof}

\begin{lemma}[Tournament Kernel Bound]\label{tournament kernel bound}
Let $T=\bigoplus_{H}\{T_k\}_{k<\w}$ be a kernel amalgamation. If $F\in\age(T_k)$ with $F\cap(T_k\setminus H)\neq\emptyset$, then $\rk_T(F)\leq\rk_{T_k}(F)+|H|+1$.
\end{lemma}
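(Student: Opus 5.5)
We argue by induction on $\alpha=\rk_{T_k}(F)$, mirroring Lemma~\ref{rank for graphs with kernels caso general}, with the Cross-Piece Bound (Lemma~\ref{tournament cross piece bound}) replacing the disconnection supplied by FAP. Fix $F\in\age(T_k)$ containing some $a\in F\cap(T_k\setminus H)$, and suppose the bound holds for every $F'\in\age(T_k)$ with $F'\cap(T_k\setminus H)\neq\emptyset$ and $\rk_{T_k}(F')<\alpha$. We also note that $\rk_{T_k}(F)=\infty$ makes the claim trivial, so we may take $\alpha<\w_1$.

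Since $\rk_{T_k}(F)\not\geq\alpha+1$, there is a type $S_0\subseteq F$ such that every realization of $F_{S_0}$ inside $T_k$ has $\rk_{T_k}$-rank $<\alpha$. (When $\alpha=0$, this says $F_{S_0}$ has no realization in $T_k$ at all.) We then show that every realization $F\cup\{z\}$ of $F_{S_0}$ in $T$ has $\rk_T(F\cup\{z\})<\alpha+|H|+1$, which yields $\rk_T(F)\leq\alpha+|H|+1$. There are two cases, according to where $z$ lies.

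\emph{Case $z\in T_k$.} Here $F\cup\{z\}$ is a realization of $F_{S_0}$ in $T_k$, since $T_k\leq T$ with the original orientations. It still meets $T_k\setminus H$ (it contains $a$), so the inductive hypothesis gives
$\rk_T(F\cup\{z\})\leq\rk_{T_k}(F\cup\{z\})+|H|+1<\alpha+|H|+1.$
Strictness is preserved because $\beta<\alpha$ implies $\beta+c<\alpha+c$ for finite $c$. This case does not occur when $\alpha=0$.

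\emph{Case $z\in T_{k'}\setminus H$ with $k'\neq k$.} Then $\{a,z\}\leq F\cup\{z\}$, so Corollary~\ref{rank function of a substructure is geq} (tournaments have AP) and Lemma~\ref{tournament cross piece bound} give
$\rk_T(F\cup\{z\})\leq\rk_T(\{a,z\})\leq|H|<\alpha+|H|+1.$

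The two cases exhaust all possibilities because $T=H\cup\bigsqcup_j(T_j\setminus H)$, and $H\subseteq T_k$ puts any $z\in H$ into the first case. We expect no real obstacle. The one point requiring care is that in the first case the inductive hypothesis must apply to $F\cup\{z\}$, which it does since $a$ remains. We also need the ordinal inequality $|H|<\alpha+|H|+1$, which holds even for $\alpha=0$. This completes the induction.
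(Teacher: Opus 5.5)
Your proof is correct and follows the paper's argument essentially step for step. Both use induction on $\rk_{T_k}(F)$ with a witnessing type $S_0$, and split on whether the realizing vertex lies in $T_k$ (apply the inductive hypothesis) or in another leaf (apply the Cross-Piece Bound together with monotonicity under substructures); the only differences are cosmetic: you fold the case $\alpha=0$ into the general step, argue directly rather than by contradiction, and make explicit the ordinal fact that $\beta<\alpha$ implies $\beta+c<\alpha+c$ for finite $c$, which the paper uses tacitly.
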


\begin{proof}
By induction on $\alpha=\rk_{T_k}(F)$.
\\
 Let $a\in F\cap(T_k\setminus H)$.
\textbf{Case $\alpha=0$:} There is an $S\subseteq F$ such that $F_S$ is not realized in $T_k$. Suppose $\rk_T(F)\geq|H|+2$. Then $F_S$ has a realization $F\cup\{w\}$ in $T$ with $\rk_T(F\cup\{w\})\geq|H|+1$. Since $F_S$ is not realized in $T_k$, we have $w\notin T_k$, thus $w\in T_{k'}\setminus H$ for some $k'\neq k$ (since $T=T_k\cup\bigsqcup_{k''\neq k}(T_{k''}\setminus H)$). By Lemma~\ref{tournament cross piece bound} and Corollary~\ref{rank function of a substructure is geq},
$$\rk_T(F\cup\{w\})\leq\rk_T(\{a,w\})\leq|H|,$$
a contradiction.
\\
\textbf{Inductive step:} Suppose the conclusion holds for all $\beta<\alpha$, with $\alpha\geq 1$. There is an $S\subseteq F$ such that every realization of $F_S$ in $T_k$ has rank in $T_k$ strictly less than $\alpha$. Suppose $\rk_T(F)\geq\alpha+|H|+2$. Then $F_S$ has a realization $F\cup\{w\}$ in $T$ with $\rk_T(F\cup\{w\})\geq\alpha+|H|+1$.
Since $T=T_k\cup\bigsqcup_{k''\neq k}(T_{k''}\setminus H)$ and $H\subseteq T_k$, we split into two cases.
\\
\textit{Case: $w\in T_k$.} Then $F\cup\{w\}\in\age(T_k)$ with $\rk_{T_k}(F\cup\{w\})<\alpha$, and by the inductive hypothesis,
$$\alpha+|H|+1\leq \rk_T(F\cup\{w\})\leq\rk_{T_k}(F\cup\{w\})+|H|+1<\alpha+|H|+1,$$
a contradiction.
\\
\textit{Case $w\in T_{k'}\setminus H$ with $k'\neq k$.} By Lemma~\ref{tournament cross piece bound} and Corollary~\ref{rank function of a substructure is geq},
$$\rk_T(F\cup\{w\})\leq\rk_T(\{a,w\})\leq|H|<\alpha+|H|+1,$$
a contradiction.
So $\rk_T(F)\leq\alpha+|H|+1$.
\end{proof}

\begin{mydef}\label{tournament construction}
For each $\alpha<\w_1$, write $\alpha=\gamma+n$ with $\gamma$ limit or $0$ and $n\in\w$. Define $T_\alpha$ recursively:
\begin{itemize}
    \item $T_n:=H_n$ for $n\in\w$.
    \item For $\gamma>0$ a limit ordinal: fix an increasing sequence $(\gamma_k)_{k<\w}$ cofinal in $\gamma$, and set \mbox{$T_\gamma:=\sum_{k<\w}T_{\gamma_k}$}.
    \item For $\gamma$ a limit ordinal and $n\geq 1$: $T_{\gamma+n}:=\bigoplus_{H_n}\bigl\{T_{\beta+m}\mid \beta<\gamma\text{ a limit ordinal or }0,\; m>n\bigr\}$.
\end{itemize}
\end{mydef}

Note that the successor case $T_{\gamma+n}$ is defined exactly as in Theorem~\ref{every rank with extra caso general}. The limit case differs from the one in Section~\ref{FAP and FEP section} in its outcome rather than its mechanism: both use the kernel amalgamation \mbox{$\bigoplus_{H_0}$} with empty kernel (the directed sum \mbox{$\sum_{k<\w}T_{\gamma_k}$} is precisely \mbox{$\bigoplus_\emptyset\{T_{\gamma_k}\}_{k<\w}$} in the tournament setting). In the FAP setting, this produces pairwise disconnected pieces; for tournaments, the kernel convention forces directed edges between distinct pieces. The following two lemmas control the rank at limit stages.

\begin{lemma}\label{0:lemma argumento de triangulo v2}
Let $\gamma$ be a limit ordinal, $n\geq 1$, and $w\in T_{\gamma+n}\setminus H_n$. There exist $m>n$ and a substructure $H\leq T_{\gamma+n}$ isomorphic to $H_m$ over $H_n$, with $w\in H$.
\end{lemma}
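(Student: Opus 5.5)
The plan is to prove a slightly stronger statement by induction on $\beta$, and then read off the lemma from it:

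\begin{quote}
$(\ast)_\beta$: for every $\beta<\w_1$ that is a limit ordinal or $0$, every $m\geq 1$, every $n<m$, and every $w\in T_{\beta+m}\setminus H_n$, there exist $m'>n$ and $H\leq T_{\beta+m}$ containing $w$ such that $H$ is isomorphic to $H_{m'}$ over $H_n$.
\end{quote}

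Here $H_n$ is regarded as a substructure of $H_m$ via the identity, using $H_n\leq H_{m}$ for $n\leq m$ from the labelling by binary strings. Two facts make this meaningful and are used throughout. First, in each kernel amalgamation $\bigoplus_{H_m}$ the copy of $H_m$ inside every leaf $T_{\beta'+m'}$ is identified identically with the kernel, so it is also the identity on $H_n\subseteq H_m$. Second, each $T_{\beta'+m'}$ is a substructure of the amalgam, because Definition~\ref{tournament kernel amalgamation} keeps the original orientations inside each piece. Since $\leq$ is transitive, a copy of $H_{m'}$ over $H_n$ found inside a piece is still one inside the whole tournament.

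For the base case $\beta=0$, we have $T_m=H_m$, so $H:=H_m$ with $m'=m>n$ works. For the inductive step, let $\beta>0$ be a limit and $m\geq1$, so $T_{\beta+m}=\bigoplus_{H_m}\{T_{\beta'+m''}\mid \beta'<\beta,\ m''>m\}$; note that $m\geq 1$ means the limit-stage sum $\sum_k T_{\gamma_k}$ never occurs here. We split on where $w$ lies.
\begin{itemize}
\item If $w\in H_m$, take $H=H_m$ and $m'=m>n$.
\item Otherwise $w\in T_{\beta'+m''}\setminus H_m$ for some $\beta'<\beta$ and $m''>m$. Apply $(\ast)_{\beta'}$ to the piece $T_{\beta'+m''}$ with $m$ in the role of $n$; this is legitimate because $m<m''$ and $w\notin H_m$. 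It yields $H\leq T_{\beta'+m''}$ containing $w$ with $H\cong H_{m'}$ over $H_m$ for some $m'>m>n$. An isomorphism over $H_m$ is in particular an isomorphism over $H_n$, and $H\leq T_{\beta+m}$ by the second fact above. When $\beta'=0$ this is just the base case, giving $H=H_{m''}$.
\end{itemize}

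To derive the lemma, take $w\in T_{\gamma+n}\setminus H_n$ with $\gamma$ limit and $n\geq1$. Then $w\in T_{\beta+m}\setminus H_n$ for some leaf with $\beta<\gamma$ and $m>n$. Apply $(\ast)_\beta$ to $T_{\beta+m}$ with this same $n$, and transfer the resulting $H$ into $T_{\gamma+n}$ by the second fact, using that the leaf's copy of $H_n$ is the kernel.

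I expect no real obstacle. The only care needed is bookkeeping: checking that the identifications of kernels are compatible through the nested amalgamations, so that "over $H_n$" is preserved at each level, and using $n<m$ (strictly) so the recursion never hits a pure limit stage $T_\beta$.
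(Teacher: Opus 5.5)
Your proof is correct and follows the paper's approach: both unwind the nested kernel amalgamations until $w$ lands in a copy of some $H_{m'}$ with $m'>n$ that fixes $H_n$. The paper phrases this descent as a finite chain $T_{\beta_0+k_0}\leq\dots\leq T_{\beta_l+k_l}$, finite because the $\beta_i$ strictly decrease. You package the same descent as a transfinite induction on $\beta$ via the strengthened statement $(\ast)_\beta$, which also spells out the kernel identifications and the $m''>m\geq 1$ bookkeeping that the paper leaves implicit.
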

\begin{proof}
By the recursive construction of $T_{\gamma+n}$, there exists a finite sequence $0=\beta_0<\dots<\beta_l=\gamma$ and natural numbers $n=k_l<\dots<k_0$ such that $w$ belongs to nested copies $ T_{\beta_0+k_0}\leq T_{\beta_1+k_1}\leq\dots\leq T_{\beta_l+k_l}$ with $H_n \leq T_{\beta_0+k_0}$, where each kernel amalgamation is taken over $H_{k_i}\supseteq H_n$. Set $H=H_{k_0}\leq T_{\beta_0+k_0}\leq T_{\gamma+n}$.
\end{proof}

\begin{lemma}\label{tournament rigidity corollary}
Let $T_{\gamma+n}=\bigoplus_{H_n}\{T_{\beta+m}\mid\beta<\gamma\text{ a limit ordinal or }0,\;m>n\}$ with $\gamma$ a limit ordinal and $n\geq 1$, and let $F\subseteq H_n$ such that it contains two distinct elements $v_p,v_q$ in the same level $X_j$. Set $S=\{v_p\}$. Then every realization of $F_S$ in $T_{\gamma+n}$ lies inside $H_n$.
\end{lemma}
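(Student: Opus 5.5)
The plan is to show directly that no vertex outside $H_n$ can realize $F_S$. The reason is that every vertex outside $H_n$ behaves like a vertex of a higher level of some $H_m$. Such a vertex treats all elements of a given lower level $X_j$ identically, whereas $F_S$ requires it to separate $v_p$ from $v_q$.

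First I unwind what a realization $F\cup\{w\}$ of $F_S$ with $S=\{v_p\}$ means: $w\to v_p$, and $u\to w$ for every $u\in F\setminus\{v_p\}$. In particular $v_q\to w$. So it suffices to show that no $w\in T_{\gamma+n}\setminus H_n$ satisfies both $w\to v_p$ and $v_q\to w$.

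Fix such a $w$. By Lemma~\ref{0:lemma argumento de triangulo v2} there are $m>n$ and a substructure $H\leq T_{\gamma+n}$ with $w\in H$, together with an isomorphism $\varphi:H\to H_m$ that is the identity on $H_n$. Since $H$ is a substructure, orientations between $w$, $v_p$ and $v_q$ in $T_{\gamma+n}$ coincide with those in $H$, hence with those in $H_m$ between $\varphi(w)$, $v_p$ and $v_q$. Because $\varphi$ fixes $H_n$ pointwise and is injective, $\varphi(w)\in H_m\setminus H_n=\bigcup_{n\le i<m}X_i$. So $\varphi(w)=v_{i,s}$ for some $i\geq n>j$ and some $s\in 2^i$.

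By rule (1) of the construction of $H_m$, for every $t\in 2^j$ we have $v_{i,s}\to v_{j,t}$ if and only if $s(j)=1$, independently of $t$. Applying this to $v_p=v_{j,t_p}$ and $v_q=v_{j,t_q}$ gives $w\to v_p\iff s(j)=1\iff w\to v_q$. Hence $w\to v_p$ and $v_q\to w$ cannot both hold. So no realization of $F_S$ uses a vertex outside $H_n$, and every realization lies inside $H_n$.

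The only step needing care is the appeal to Lemma~\ref{0:lemma argumento de triangulo v2}. I need the copy $H$ to be isomorphic to $H_m$ \emph{over} $H_n$, so that $v_p$ and $v_q$ keep their level-$j$ labels and $w$ is sent to a level $\geq n$. I also need that orientations inside $H$ are inherited unchanged from $T_{\gamma+n}$. Both are exactly what that lemma provides, since kernel amalgamation preserves orientations within each piece. Beyond this the argument is a direct computation.
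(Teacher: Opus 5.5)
Your proof is correct and matches the paper's argument. You use Lemma~\ref{0:lemma argumento de triangulo v2} to place $w$ in a copy of $H_m$ over $H_n$ at a level $\geq n>j$. The between-levels orientation rule, which the paper cites via Remark~\ref{tournament covering} and you apply directly as rule (1), then gives $w\to v_p\iff w\to v_q$, contradicting that $F\cup\{w\}$ realizes $S=\{v_p\}$.
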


\begin{proof}
Suppose $F\cup\{w\}$ realizes $F_S$ with $w\notin H_n$. By Lemma~\ref{0:lemma argumento de triangulo v2}, there exist $m>n$ and $H\leq T_{\gamma+n}$ isomorphic to $H_m$ via an isomorphism that is the identity on $H_n$, with $w\in H$. Since $w\in H\setminus H_n$, the element $w$ lies in level $X_{j'}$ for some $n\leq j'\leq m-1$. As $j<n\leq j'$, by Remark~\ref{tournament covering}, $w\to v_p\iff w\to v_q$, contradicting that $F\cup \{w\}$ realizes $S=\{v_p\}$.
\end{proof}

\begin{lemma}[Pigeonhole in $H_n$]\label{tournament pigeonhole in Hn}
Let $T_{\gamma+n}=\bigoplus_{H_n}\{T_{\beta+m}\mid\beta<\gamma\text{ a limit ordinal or }0,\;m>n\}$, where $\gamma$ is a limit ordinal and $n\geq 1$. If $F\subseteq H_n$ with $|F|\geq n+1$ and $\rk_{T_{\gamma+n}}(F)\geq\gamma$, then either $F=H_n$ or there exists $w\in H_n\setminus F$ such that $\rk_{T_{\gamma+n}}(F\cup\{w\})\geq\gamma$.
\end{lemma}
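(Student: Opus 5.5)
The plan is to combine the pigeonhole principle on the levels of $H_n$ with the rigidity Lemma~\ref{tournament rigidity corollary}, and then to pass from ``large rank for each $\beta<\gamma$'' to ``rank at least $\gamma$'' by a second, finite pigeonhole.

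First, assume $F\neq H_n$; otherwise there is nothing to prove. The domain of $H_n$ is $\bigcup_{j\in n}X_j$, which is partitioned into only $n$ levels. Since $|F|\geq n+1$, two distinct elements $v_p,v_q\in F$ lie in the same level $X_j$ for some $j<n$. Put $S=\{v_p\}$. By Lemma~\ref{tournament rigidity corollary}, every realization of $F_S$ in $T_{\gamma+n}$ lies inside $H_n$. Such a realization has the form $F\cup\{w\}$ with $w\notin F$, so necessarily $w\in H_n\setminus F$.

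Next, I use the hypothesis $\rk_{T_{\gamma+n}}(F)\geq\gamma$. Since $\gamma$ is a limit ordinal, for every $\beta<\gamma$ we have $\beta+1<\gamma$, and hence $\rk_{T_{\gamma+n}}(F)\geq\beta+1$. By the definition of the rank, the prime extension $F_S$ then has a realization $F\cup\{w_\beta\}$ with $\rk_{T_{\gamma+n}}(F\cup\{w_\beta\})\geq\beta$. By the previous paragraph, each $w_\beta$ lies in the finite set $H_n\setminus F$.

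Finally, I choose $w$. The assignment $\beta\mapsto w_\beta$ sends the set $\gamma$, which has no maximum, into a finite set. Hence some $w\in H_n\setminus F$ equals $w_\beta$ for a set of $\beta$ cofinal in $\gamma$. Otherwise each of the finitely many fibres would be bounded below $\gamma$, and the maximum of these finitely many bounds would still be $<\gamma$, which is impossible. For this $w$, take any $\beta'<\gamma$ and a $\beta\geq\beta'$ in the cofinal set. Then $\rk_{T_{\gamma+n}}(F\cup\{w\})\geq\beta\geq\beta'$, using that ranks are downward closed. Thus $\rk_{T_{\gamma+n}}(F\cup\{w\})\geq\beta'$ for every $\beta'<\gamma$, and by the limit clause of Definition~\ref{2:definition_rankFunction} we get $\rk_{T_{\gamma+n}}(F\cup\{w\})\geq\gamma$, as required.

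The only real content is the localization of realizations inside $H_n$, and Lemma~\ref{tournament rigidity corollary} supplies exactly that. The remaining step that needs care is the cofinality pigeonhole. It works precisely because $H_n\setminus F$ is finite; if realizations could escape to the infinitely many leaves of the kernel amalgamation, this step would fail.
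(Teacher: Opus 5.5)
Your proposal is correct and follows essentially the same route as the paper. The argument is: level pigeonhole to find $v_p,v_q$ in one level, take $S=\{v_p\}$, apply Lemma~\ref{tournament rigidity corollary} to confine every realization to the finite set $H_n\setminus F$, and then pass from realizations of rank at least $\beta$ for each $\beta<\gamma$ to a single $w$ of rank at least $\gamma$. Your cofinal-fibre pigeonhole is simply an unpacked version of the paper's observation that a supremum over a finite set is attained.
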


\begin{proof}
Assume $F\neq H_n$. As $|F|\geq n+1$ and $H_n$ has $n$ levels $X_0,\dots,X_{n-1}$, by the pigeonhole principle there exist distinct $v_p,v_q\in F$ lying in the same level $X_j$. Set $S=\{v_p\}$. By Lemma~\ref{tournament rigidity corollary}, every realization $F\cup\{w\}$ of $F_S$ in $T_{\gamma+n}$ satisfies $w\in H_n$, hence $w\in H_n\setminus F$. Since $F\neq H_n$, the set $H_n\setminus F$ is nonempty and finite.
\\
Since $\rk_T(F)\geq\gamma$ and $\gamma$ is a limit ordinal, for every $\delta<\gamma$ we have $\rk_T(F)\geq\delta+1$; thus, the prime extension $F_S$ has a realization with rank at least $\delta$. Therefore
\[
\sup\{\rk_T(F\cup\{w\})\mid F\cup \{w\}\subseteq H_n\text{ realizes }S\}\geq\gamma.
\]
The supremum is taken over a finite set, hence some $w^*\in H_n\setminus F$ satisfies $\rk_T(F\cup\{w^*\})\geq\gamma$.
\end{proof}

\begin{theorem}\label{tournament rank of T alpha}
For every $\alpha<\w_1$, $\rk(T_\alpha)=\alpha$.
\end{theorem}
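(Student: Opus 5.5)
We argue by transfinite induction on $\alpha=\gamma+n$, proving simultaneously with $\rk(T_\alpha)=\alpha$ the analogue of Condition~\ref{2 de teo superprincipal} of Theorem~\ref{every rank with extra caso general}: for $n\geq 1$, $j\leq n$ and every $\vec{x}\in\prod_{i\in j}X_i$, we have $\rk_{T_{\gamma+n}}(\{x_i\mid i\in j\})\geq\gamma+(n-j)$. We also record that $H_n\leq T_{\gamma+n}$ with the orientation of $H_n$ unchanged. The finite case $\gamma=0$ is Theorem~\ref{tournament rank Hn} together with Lemma~\ref{tournament lower bound Hn}.

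\textbf{Limit case $\alpha=\gamma$, $n=0$.} For the lower bound, each $T_{\gamma_k}\leq T_\gamma$, so Corollary~\ref{1.13} gives $\rk(T_\gamma)\geq\gamma_k$ for all $k$, hence $\rk(T_\gamma)\geq\gamma$. For the upper bound, take $v\in T_{\gamma_k}$ and view $T_\gamma=A+B$ with $A=\sum_{i\leq k}T_{\gamma_i}$ and $B$ the tail. Lemma~\ref{tournament localization} gives $\rk_{T_\gamma}(\{v\})\leq\rk_A(\{v\})+1\leq\rk(A)+1$. By Lemma~\ref{tournament finite concatenation} and the induction hypothesis $\rk(T_{\gamma_i})=\gamma_i<\gamma$, we get $\rk(A)<\gamma$. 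Therefore every realization of every prime extension of $\emptyset$ has rank $<\gamma$, so $\rk(T_\gamma)\leq\gamma$.

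\textbf{Case $n\geq1$, lower bound.} Here $T_{\gamma+n}=\bigoplus_{H_n}\{T_{\beta+m}\}$. This is exactly the argument for Condition~\ref{2 de teo superprincipal} in Theorem~\ref{every rank with extra caso general}.
\begin{enumerate}
\item For the base case $j=n$, use the inductive invariant $\rk_{T_{\beta+m}}(\{x_i\mid i\in n\})\geq\beta+(m-n)$ together with Lemma~\ref{rank on substructures vs rank on the total}. Taking the supremum over $\beta<\gamma$ and $m>n$ gives $\geq\gamma$.
\item For the downward step in $j$, use Remark~\ref{tournament covering}: $H_n$ sits inside $T_{\gamma+n}$ with its own orientations.
\item Taking $j=1$ gives $\rk(T_{\gamma+n})\geq\gamma+n$.
\end{enumerate}

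\textbf{Case $n\geq1$, upper bound.} This is the main obstacle, because there is no disconnection and no "complete set" argument as in the FAP setting. Suppose $\rk(T_{\gamma+n})\geq\gamma+n+1$. We build $F_0\subseteq F_1\subseteq\dots$ with $\rk(F_k)\geq\gamma+n+1-k$ for $k\leq n+1$, each step a realization of a prime extension, so that the last set $F$ has $|F|=n+1$ and $\rk(F)\geq\gamma$.

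First we show that any such finite $F$ with $\rk\geq\gamma$ must lie in $H_n$. Suppose some $a\in F$ lies in a leaf $T_{\beta+m}\setminus H_n$.
\begin{itemize}
\item If another $x\in F$ lies in a different leaf, Lemma~\ref{tournament cross piece bound} and Corollary~\ref{rank function of a substructure is geq} give $\rk(F)\leq|H_n|<\gamma$.
\item Otherwise $F\in\age(T_{\beta+m})$. Lemma~\ref{tournament kernel bound} gives $\rk(F)\leq\rk_{T_{\beta+m}}(F)+|H_n|+1\leq\beta+m+|H_n|+1<\gamma$, using Corollary~\ref{1.11} and the induction hypothesis.
\end{itemize}
So $F\subseteq H_n$ with $|F|\geq n+1$. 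Now iterate Lemma~\ref{tournament pigeonhole in Hn}, which adds points of $H_n$ while keeping rank $\geq\gamma$, until $F=H_n$ and $\rk_{T_{\gamma+n}}(H_n)\geq\gamma\geq 1$.

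It remains to contradict this. Pick $v_p\neq v_q$ in a common level with $S=\{v_p\}$. By Lemma~\ref{tournament rigidity corollary}, every realization of $(H_n)_S$ lies in $H_n$, but $H_n\setminus H_n=\emptyset$. So $(H_n)_S$ is unrealizable and $\rk(H_n)=0$, a contradiction. The case $n=1$ has no two points in a level, but there $|F|=2>|H_1|=1$, so $F\subseteq H_1$ is already impossible.

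The delicate point is choosing $F$ of size exactly $n+1$ with rank $\geq\gamma$. The extensions can be forced along any type, since rank $\geq\gamma+1$ means every prime extension has a realization of rank $\geq\gamma$. So it suffices to iterate the definition $n+1$ times from $\emptyset$.
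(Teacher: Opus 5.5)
Your proof is correct and follows the paper's architecture. You induct on $\alpha=\gamma+n$ and handle the limit case through the directed sum. For $n\geq 1$, your upper bound forces a set of size $n+1$ and rank $\geq\gamma$ into $H_n$, then iterates Lemma~\ref{tournament pigeonhole in Hn} and finishes with Lemma~\ref{tournament rigidity corollary}, giving $\rk_{T_{\gamma+n}}(H_n)=0$.

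There are three differences from the paper.

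\textbf{Lower bound for $n\geq 1$.} This is the substantive one. You carry the analogue of Condition~\ref{2 de teo superprincipal} as an inductive invariant, and this is genuinely needed. The paper's one-line justification ($T_{\beta+m}\leq T_{\gamma+n}$ plus the induction hypothesis) only yields $\rk(T_{\gamma+n})\geq\sup\{\beta+m\}=\gamma$. The missing $+n$ comes from the downward induction through the levels of $H_n$ via Remark~\ref{tournament covering}, which you spell out. This mirrors what the paper does in the FAP setting.

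\textbf{Limit case.} The paper views $\sum_k T_{\gamma_k}$ as a kernel amalgamation over $\emptyset$ and applies Lemma~\ref{tournament kernel bound} to get $\rk_{T_\gamma}(\{v\})\leq\rk_{T_{\gamma_k}}(\{v\})+1$. You instead split off the finite initial segment $A$ and use Lemma~\ref{tournament localization} together with Lemma~\ref{tournament finite concatenation}. Both work; the paper's route is slightly shorter.

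\textbf{Showing $F\subseteq H_n$.} Your split into a cross-leaf subcase and a same-leaf subcase is unnecessary. Applying Lemma~\ref{tournament kernel bound} to the singleton $\{a\}\in\age(T_{\beta+m})$ gives $\rk_{T_{\gamma+n}}(\{a\})<\gamma$. Corollary~\ref{rank function of a substructure is geq} then handles both subcases at once, since the Cross-Piece Bound is already absorbed into the Kernel Bound.

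\textbf{Notation.} In the final step, write $\rk_{T_{\gamma+n}}(H_n)=0$ rather than $\rk(H_n)=0$. The latter reads as the rank of $H_n$ as a standalone tournament, which is $n$.
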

\begin{proof}
Write $\alpha=\gamma+n$ with $\gamma$ limit or $0$, and $n\in \omega$. We proceed by induction.

\textbf{Case $\gamma=0$:} $T_n=H_n$ and $\rk(H_n)=n$ by Theorem~\ref{tournament rank Hn}.

\textbf{Case $\gamma$ limit, $n=0$:} The lower bound follows from $T_{\gamma_k}\leq T_\gamma$ and the inductive hypothesis. For the upper bound, suppose that $\rk(T_\gamma)\geq\gamma+1$. Thus there exists $v$ with $\rk_{T_\gamma}(\{v\})\geq\gamma$. But $v\in T_{\gamma_k}$ for some $k\in \omega$, then we can calculate $\rk_{T_{\gamma_k}}(\{v\})\leq \gamma_k < \gamma$ by the inductive hypothesis and Corollary \ref{rank function of a substructure is geq}. Also $\rk_{T_\gamma}(\{v\}) \leq \rk_{T_{\gamma_k}}(\{v\}) +1$ by Lemma \ref{tournament kernel bound}. Hence $\rk_{T_\gamma}(\{v\}) < \gamma$, a contradiction.  

\textbf{Case $\gamma$ limit, $n\geq 1$:} The lower bound follows from $T_{\beta+m}\leq T_{\gamma+n}$ for every $\beta <\gamma$ limit or $0$ and $m>n$, and the inductive hypothesis.

For the upper bound, suppose $\rk(T_{\gamma+n})\geq\gamma+n+1$. By the definition of rank applied $n+1$ times, there exist $v_0,\dots,v_n\in T_{\gamma+n}$ with $\rk_{T_{\gamma+n}}(\{v_0,\dots,v_n\})\geq\gamma$.

\textit{Case A: some $v_l\notin H_n$.} Then $v_l\in T_{\beta+m}\setminus H_n$ for some $\beta<\gamma$ limit (or $0$) and $m>n$. By the Tournament Kernel Bound (Lemma~\ref{tournament kernel bound}) and the inductive hypothesis $\rk(T_{\beta+m})=\beta+m$,
\[
\rk_T(\{v_l\})\leq\rk_{T_{\beta+m}}(\{v_l\})+|H_n|+1\leq(\beta+m)+|H_n|+1<\gamma.
\]
By Corollary~\ref{rank function of a substructure is geq}, $\rk_T(\{v_0,\dots,v_n\})\leq\rk_T(\{v_l\})<\gamma$, contradicting $\rk_T(\{v_0,\dots,v_n\})\geq\gamma$.
\\
\textit{Case B: $\{v_0,\dots,v_n\}\subseteq H_n$.} For $n=1$, this case is vacuous since $|H_1|=1<2=n+1$, then Case A applies. Assume henceforth $n\geq 2$. Set $F_0=\{v_0,\dots,v_n\}$. We iteratively apply Lemma~\ref{tournament pigeonhole in Hn}: starting with $F_0$, at each step $k$ we have $F_k\subseteq H_n$ with $|F_k|=n+1+k$ and $\rk_T(F_k)\geq\gamma$.

Finally, by pigeonhole there are $v_p,v_q\in H_n$ in the same level. By Lemma~\ref{tournament rigidity corollary} applied to $F=H_n$ with $S=\{v_p\}$, every realization $w$ of this prime extension would satisfy $w\in H_n$. But $w\in T_{\gamma+n}\setminus H_n$ by definition of realization, then no realization exists. Hence $\rk_T(H_n)=0$, contradicting $\rk_T(H_n)\geq\gamma$.
\end{proof}

\begin{corollary}\label{tournament RP}
The class of finite tournaments has the Rank Property.
\end{corollary}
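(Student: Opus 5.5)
This follows directly from Theorem~\ref{tournament rank of T alpha}. The plan is to check that, for every countable ordinal $\alpha$, the structure $T_\alpha$ of Definition~\ref{tournament construction} is a legitimate member of $\sigma\cF$. Then $\alpha\mapsto T_\alpha$ witnesses that every $\alpha<\w_1$ is realized as a rank.

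First I would verify by induction on $\alpha$ that each $T_\alpha$ is a countable tournament, i.e., a countable union of a chain of finite tournaments. For $T_n=H_n$ this is immediate, since $H_n$ is finite. For limit $\gamma$, $T_\gamma=\sum_{k<\w}T_{\gamma_k}$ is a countable disjoint union of countable tournaments. Every pair of distinct vertices receives exactly one orientation: within a summand this comes from the summand itself, and across summands it comes from the sum convention. Hence $T_\gamma$ is a countable tournament.

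For $T_{\gamma+n}$ with $n\geq1$, the kernel amalgamation $\bigoplus_{H_n}$ ranges over the countably many pairs $(\beta,m)$ with $\beta<\gamma$ limit or $0$ and $m>n$. One point needs checking: each $T_{\beta+m}$ must contain $H_n$ as a subtournament. This holds because, by construction, $H_m$ is contained in $T_{\beta+m}$ (it is either $T_m$ itself or the kernel), and $H_n\leq H_m$ for $n<m$. Enumerating the index set as $\w$ then makes Definition~\ref{tournament kernel amalgamation} applicable. It yields a countable tournament, since the orientation is inherited inside each piece and given by the index order between distinct leaves. Every countable tournament is the union of the chain of its finite initial segments under an enumeration, so it lies in $\sigma\cF$.

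Having established membership, I would invoke Theorem~\ref{tournament rank of T alpha}, which gives $\rk(T_\alpha)=\alpha$ for all $\alpha<\w_1$; this is exactly the Rank Property. The only mildly delicate point is the well-definedness of the recursion: the choice of the cofinal sequences $(\gamma_k)$ and the embeddings $H_n\leq T_{\beta+m}$. Both are already implicit in the theorem's proof, so no new obstacle should arise.
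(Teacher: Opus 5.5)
Your proposal is correct and follows the paper's route: the corollary is read off directly from Theorem~\ref{tournament rank of T alpha}, which gives $\rk(T_\alpha)=\alpha$ for every $\alpha<\omega_1$. Your check that each $T_\alpha$ is a countable tournament is a routine detail that the paper leaves implicit. In particular, you note that $H_n\leq H_m\leq T_{\beta+m}$, so the kernel amalgamation over $H_n$ is well defined.
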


The same construction generalizes to \emph{$r$-tournaments} for any $r\geq 2$: structures over a language $\{R_1,\dots,R_r\}$ of binary relations such that for every ordered pair $(u,v)$ of distinct elements, exactly one $R_i(u,v)$ holds, and where the labels are governed by a fixed involution $\sigma:\{1,\dots,r\}\to\{1,\dots,r\}$ via the coherence condition $R_i(u,v)\iff R_{\sigma(i)}(v,u)$. Intuitively, $r$-tournaments record the outcome of each match between pairs of players, with the involution encoding how a result looks from each player's perspective. Tournaments correspond to $r=2$ with $\sigma$ swapping the two labels: $R_1(u,v)$ means ``$u$ beats $v$'' and $R_2(u,v)$ means ``$v$ beats $u$'', then $R_1(u,v)\iff R_2(v,u)$ records that ``$u$ beats $v$'' is equivalent to ``$v$ loses to $u$''. For $r=3$, one can encode \emph{win, lose, or draw}: $R_1(u,v)$ means ``$u$ beats $v$'', $R_2(u,v)$ means ``$v$ beats $u$'', $R_3(u,v)$ means ``$u$ and $v$ draw'', with $\sigma(1)=2$, $\sigma(2)=1$, $\sigma(3)=3$ (a draw looks the same from both sides). For larger $r$, finer outcome classifications are possible. The construction uses $|H_n|=(r^n-1)/(r-1)$ with $X_j=\{v_{j,s}\mid s\in r^j\}$, and the arguments adapt with appropriate choices of labels for orientations between levels and within kernel amalgamations; we leave the details to the reader.

\section{Linear orders}\label{sec linear orders}

In this section, $\cF$ denotes the class of finite linear orders and $\sigma\cF$ the class of countable linear orders. The approach here differs fundamentally from the previous two sections: since every amalgam of linear orders must totally order the elements, there is no free amalgamation and no disconnection between components. Instead, we introduce the Interval Characterization (Proposition~\ref{interval characterization}), which reduces rank computations to a one-dimensional problem, and a Pigeonhole Lemma for intervals (Lemma~\ref{pigeonhole for intervals}), which provides the upper bounds.

For two linear orders $A$ and $B$, define $A+B$ as the linear order on the disjoint union of $A$ and $B$, with the same order on each of them and $a<b$ for every $a\in A$ and $b\in B$. More generally, for a sequence $(A_k)_{k<\w}$ of linear orders, define $\sum_{k<\w}A_k$ as the linear order on the disjoint union $\bigsqcup_{k<\w}A_k$, with the original order within each $A_k$ and $a<b$ whenever $a\in A_i$, $b\in A_j$, and $i<j$. If $c\in A$, denote $A_{<c}= \defset{a\in A}{a<c}$ and $A_{>c}=\defset{a\in A}{a>c}$. We say that $S\leq A$ is a tail of $A$ if there exists $c\in A$ such that $S= (A)_{>c}$. Denote as $A^*$ to the reversed ordering of $A$.

\begin{mydef}\label{def intervals}
Let $Y$ be a linear order, $m\in \omega$ and $F=\{a_0,\dots,a_{m-1}\}\in\age(Y)$ with $a_0<\dots<a_{m-1}$. The elements of $F$ partition $Y\setminus F$ into $m+1$ \emph{intervals}:
\begin{align*}
I_0(F,Y)&=\defset{y\in Y}{y<a_0},\\
I_k(F,Y)&=\defset{y\in Y}{a_{k-1}<y<a_k}
\quad\text{for }1\leq k\leq m-1,\\
I_m(F,Y)&=\defset{y\in Y}{y>a_{m-1}}.
\end{align*}
We say these are \emph{intervals induced by $F$ in $Y$}. When $F=\emptyset$, there is a single interval $I_0(\emptyset,Y)=Y$. Write $I_k(F)$ for $k\leq m$, or only say it is an interval induced by $F$ when $Y$ is clear from context.
\end{mydef}

\begin{remark}\label{prime extensions in linear orders}
In the class of finite linear orders, the prime extensions of $F=\{a_0,\dots,a_{m-1}\}\in\age(Y)$ correspond to the $m+1$ intervals induced by $F$ in $Y$ (Definition~\ref{def intervals}): for each $0\leq k\leq m$, there is a unique prime extension adding a new element at position $k$. The prime extension corresponding to position $k$ is realizable in $Y$ if and only if $I_k (F)\neq\emptyset$; in that case, $F\cup\{c\}$ is a realization for every $c\in I_k (F)$. When $F=\emptyset$, there is a single prime extension and every element of $Y$ induces a realization.
\end{remark}

The rank of finite linear orders is completely determined by their cardinality.

\begin{lemma}[Interval Size Lemma]\label{interval size lemma}
Let $Y$ be a finite linear order, $F\in\age(Y)$, and $n\in\w$. Then $\rk_Y(F)\geq n$ if and only if every interval induced by $F$ in $Y$ has at least $2^n-1$ elements.
\end{lemma}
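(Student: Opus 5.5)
I will prove the statement by induction on $n$, simultaneously for all $F\in\age(Y)$. For $n=0$ both sides are trivially true, since every finite substructure has rank at least $0$ and every interval has at least $2^0-1=0$ elements. For the inductive step I use the definition of $\rk_Y(F)\geq n+1$ together with Remark~\ref{prime extensions in linear orders}. The prime extensions of $F$ correspond to its intervals $I_k(F)$, and realizing position $k$ amounts to choosing $c\in I_k(F)$. Adding $c$ to $F$ leaves every other interval $I_{k'}(F)$, $k'\neq k$, unchanged. It splits $I_k(F)$ into two intervals of $F\cup\{c\}$, namely the points of $I_k(F)$ below $c$ and those above $c$.

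So, by the inductive hypothesis, $\rk_Y(F\cup\{c\})\geq n$ holds if and only if two things are true. First, every interval $I_{k'}(F)$ with $k'\neq k$ has at least $2^n-1$ elements. Second, both pieces of $I_k(F)$ cut by $c$ have at least $2^n-1$ elements. Hence $\rk_Y(F)\geq n+1$ holds if and only if, for every $k$, there is some $c\in I_k(F)$ satisfying these conditions.

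For the direction ($\Leftarrow$), assume every interval has at least $2^{n+1}-1$ elements. For each $k$, let $c$ be the $2^n$-th element of $I_k(F)$. Then $c$ has exactly $2^n-1$ elements of $I_k(F)$ below it and at least $2^{n+1}-1-2^n=2^n-1$ above it. The other intervals have at least $2^{n+1}-1\geq 2^n-1$ elements, so the condition holds.

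For the direction ($\Rightarrow$), fix $k$ and take a good $c\in I_k(F)$. Then $|I_k(F)|\geq (2^n-1)+1+(2^n-1)=2^{n+1}-1$. Since this holds for every $k$, every interval is large enough. In the case $F=\emptyset$ there is a single interval, $Y$ itself, and the same argument applies verbatim.

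The only delicate point is the bookkeeping in the first step. I must check carefully that the intervals induced by $F\cup\{c\}$ are exactly the untouched $I_{k'}(F)$ together with the two halves of $I_k(F)$. This includes the edge cases $k=0$ and $k=m$, where one of the halves is the part of an unbounded end interval. Once that is verified, the induction goes through with no real obstacle. Finiteness of $Y$ is not even needed for the equivalence, only for the intervals' cardinalities to make sense as natural numbers (infinite intervals trivially satisfy the bound).
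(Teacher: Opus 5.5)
Your proof is correct and takes essentially the same route as the paper. You induct on $n$ for all $F$ at once, use Remark~\ref{prime extensions in linear orders} to identify realizations with points $c\in I_k(F)$, get the count $(2^n-1)+1+(2^n-1)=2^{n+1}-1$ for ($\Rightarrow$), and for ($\Leftarrow$) choose a point $c$ with at least $2^n-1$ elements of the interval on each side. Taking $c$ to be the $2^n$-th element of $I_k(F)$ is just a concrete instance of the paper's ``any such $c$'', which is fine here since $Y$ is finite.
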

\begin{proof}
By induction on $n$. The case $n=0$ is trivial.

$(\Rightarrow)$: Suppose $\rk_Y(F)\geq n+1$ and fix an interval $I$ of $F$. By Remark~\ref{prime extensions in linear orders}, there exists $c\in I$ with $\rk_Y(F\cup\{c\})\geq n$. By the inductive hypothesis, each of the two sub-intervals of $I$ induced by $c$ have at least $2^n-1$ elements, giving $|I|\geq (2^n-1)+1+(2^n-1)=2^{n+1}-1$.

$(\Leftarrow)$: Suppose every interval induced by $F$ has at least $2^{n+1}-1$ elements. Fix such an interval $I$ and any $c\in I$ with at least $2^n-1$ elements strictly below and strictly above it in $I$ (such $c$ exists since $|I|\geq 2^{n+1}-1$). Then all intervals induced by $F\cup\{c\}$ have at least $2^n-1$ elements, thus $\rk_Y(F\cup\{c\})\geq n$ by the inductive hypothesis. Since this holds for every prime extension, $\rk_Y(F)\geq n+1$.
\end{proof}

\begin{theorem}\label{rank of finite linear orders}
For every finite linear order $Y$,
$$\rk(Y)=\lfloor\log_2(|Y|+1)\rfloor.$$
In particular, $\rk(Y)\geq n$ if and only if $|Y|\geq 2^n-1$.
\end{theorem}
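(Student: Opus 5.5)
The plan is to apply the Interval Size Lemma (Lemma~\ref{interval size lemma}) with $F=\emptyset$. By Definition~\ref{def intervals}, the empty set induces exactly one interval, namely $I_0(\emptyset,Y)=Y$. Taking $F=\emptyset$ in the lemma therefore gives, for every $n\in\w$,
\[
\rk(Y)=\rk_Y(\emptyset)\geq n \iff |Y|\geq 2^n-1 .
\]
This is already the ``in particular'' clause, so I will prove that clause first and derive the formula from it.

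The formula is then pure arithmetic. Since $Y$ is finite, the condition $|Y|\geq 2^n-1$ fails for large $n$, so $\rk(Y)$ is not $\infty$ and is not a limit ordinal $\geq\w$. Hence $\rk(Y)$ is a natural number, and it is the largest $n$ satisfying the condition. This uses the downward closure of the relation $\rk\geq\alpha$ noted after Definition~\ref{2:definition_rankFunction}, together with the sup convention in that definition.

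The condition $|Y|\geq 2^n-1$ is equivalent to $2^n\leq |Y|+1$, which is equivalent to $n\leq\log_2(|Y|+1)$. The largest such $n$ is $\lfloor\log_2(|Y|+1)\rfloor$. Note that $|Y|+1\geq 1$, so the logarithm is defined and nonnegative; the case $Y=\emptyset$ gives rank $0$, which is consistent with the formula.

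I expect no real obstacle, since all of the work is done by the Interval Size Lemma. The only point needing care is the remark that $\rk(Y)<\w$: the lemma only controls finite values of $n$, so I must rule out $\rk(Y)\geq\w$ directly. This follows because $\rk(Y)\geq\w$ would force $\rk(Y)\geq n$ for every $n$, and hence $|Y|\geq 2^n-1$ for every $n$, which is impossible for finite $Y$.
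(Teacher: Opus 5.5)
Your proposal is correct and follows the paper's proof: both apply the Interval Size Lemma with $F=\emptyset$, where $Y$ itself is the single induced interval, and then convert $|Y|\geq 2^n-1$ into $n\leq\lfloor\log_2(|Y|+1)\rfloor$. You also make explicit the step the paper leaves implicit, namely ruling out $\rk(Y)\geq\w$ so that the rank is the largest such finite $n$.
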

\begin{proof}
By Lemma~\ref{interval size lemma} with $F=\emptyset$: $\rk(Y)\geq n$ iff $|Y|\geq 2^n-1$ iff $n\leq\lfloor\log_2(|Y|+1)\rfloor$.
\end{proof}

We now prove that for every countable ordinal $\alpha$ there exists a countable linear order of rank $\alpha$. The key tool is a characterization of the rank of a finite substructure purely in terms of the ranks of its intervals, viewed as independent linear orders.

\begin{proposition}[Interval Characterization]\label{interval characterization}
Let $Y$ be a linear order, $F\in\age(Y)$ with size $m$, and $\alpha$ an ordinal. Then
$$\rk_Y(F)\geq\alpha\quad\text{if and only if}\quad\rk(I)\geq\alpha\text{ for every interval }I \text{ induced by }F\text{ in }Y.$$
In particular, $\rk_Y(F)=\min\{\rk(I_j(F))\mid 0\leq j\leq m\}$.
\end{proposition}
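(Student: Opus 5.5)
We argue by transfinite induction on $\alpha$, proving both directions simultaneously for all $Y$ and all $F\in\age(Y)$. The case $\alpha=0$ is trivial, since every rank is at least $0$. The limit case is also immediate: $\rk_Y(F)\geq\alpha$ means $\rk_Y(F)\geq\beta$ for all $\beta<\alpha$. By the induction hypothesis this holds iff every interval $I$ satisfies $\rk(I)\geq\beta$ for all $\beta<\alpha$, that is, iff $\rk(I)\geq\alpha$. So all the work is in the successor step $\alpha=\beta+1$.

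The key observation is how intervals change under a prime extension. Take $c\in I_k(F)$ (Remark~\ref{prime extensions in linear orders}). The intervals induced by $F\cup\{c\}$ in $Y$ are the intervals $I_j(F)$ for $j\neq k$, together with the two pieces $I_{<c}=(I_k(F))_{<c}$ and $I_{>c}=(I_k(F))_{>c}$. Moreover, the interval induced by $\{c\}$ in the linear order $I_k(F)$ are exactly these two pieces, and intervals are computed intrinsically. Consequently, the induction hypothesis applied to the structure $I_k(F)$ with the one-point substructure $\{c\}$ gives
\[
\rk_{I_k(F)}(\{c\})\geq\beta\iff\rk(I_{<c})\geq\beta\text{ and }\rk(I_{>c})\geq\beta.
\]
It also gives $\rk_Y(F\cup\{c\})\geq\beta$ iff all intervals of $F\cup\{c\}$ have rank $\geq\beta$.

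For $(\Rightarrow)$: assume $\rk_Y(F)\geq\beta+1$ and fix $k$. The prime extension at position $k$ has a realization $F\cup\{c\}$ with $c\in I_k(F)$ and $\rk_Y(F\cup\{c\})\geq\beta$. By the induction hypothesis both pieces have rank $\geq\beta$, so $\rk_{I_k(F)}(\{c\})\geq\beta$. Since $\emptyset$ has a unique prime extension, this gives $\rk(I_k(F))\geq\beta+1$.

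For $(\Leftarrow)$: assume every $\rk(I_j(F))\geq\beta+1$ and fix a position $k$. Since $\rk(I_k(F))\geq\beta+1$, there is $c\in I_k(F)$ with $\rk_{I_k(F)}(\{c\})\geq\beta$, hence both pieces have rank $\geq\beta$. The other intervals $I_j(F)$ have rank $\geq\beta+1\geq\beta$, using the monotonicity of $\rk\geq$ noted after Definition~\ref{2:definition_rankFunction}. By the induction hypothesis $\rk_Y(F\cup\{c\})\geq\beta$, which gives $\rk_Y(F)\geq\beta+1$.

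The "in particular" statement follows by taking $\alpha$ equal to the minimum (including $\alpha=\infty$ by the "for all $\alpha$" form). The main point requiring care is that the induction must quantify over all linear orders simultaneously, because we apply the hypothesis both to $Y$ and to the sub-order $I_k(F)$. We must also make sure that the intervals are viewed as structures in their own right; rank of an empty interval is $0$, consistent with $\rk(\emptyset)=0$ as the unique prime extension is unrealizable. No amalgamation argument is needed, since prime extensions are fully described by Remark~\ref{prime extensions in linear orders}.
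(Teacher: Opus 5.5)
Your proof is correct and follows essentially the same route as the paper. Both use a simultaneous transfinite induction on $\alpha$ over all linear orders and finite substructures, and in the successor step both apply the induction hypothesis to $(Y,F\cup\{c\})$ and also to the interval $I$ with the one-point substructure $\{c\}$. Your explicit treatment of empty intervals and of the ``in particular'' clause (including the value $\infty$) only spells out details the paper leaves implicit.
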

\begin{proof}
By induction on $\alpha$, we prove both directions simultaneously. The case $\alpha=0$ is trivial. For $\alpha$ a limit ordinal, both sides reduce to the statement holding for all $\beta<\alpha$, then the result follows from the inductive hypothesis.

Suppose $\alpha=\beta+1$.

$(\Rightarrow)$: Let $\rk_Y(F)\geq\beta+1$ and fix an interval $I$ induced by $F$. By Remark~\ref{prime extensions in linear orders}, there exists $c\in I$ such that $\rk_Y(F\cup\{c\})\geq\beta$. By the inductive hypothesis, every interval induced by $F\cup\{c\}$ in $Y$ has rank at least $\beta$. In particular, the two sub-intervals $I_{<c}$ and $I_{>c}$ have rank at least $\beta$. These are exactly the two intervals induced by $\{c\}$ in $I$, viewed as an independent linear order. Since both have rank at least $\beta$, the inductive hypothesis (applied to $I$ and $\{c\}$) gives $\rk_{I}(\{c\})\geq\beta$. As $\{c\}$ realizes the unique prime extension of $\emptyset$ in $I$, we conclude $\rk(I)\geq\beta+1$.

$(\Leftarrow)$: Suppose $\rk(I)\geq\beta+1$ for every interval $I$ induced by $F$. For every such interval, there exists $c_I\in I$ with $\rk_{I}(\{c_I\})\geq\beta$. By the inductive hypothesis, $\beta \leq rk(I_{<c_I}), rk(I_{>c_I})$. The intervals induced by $F\cup\{c_I\}$ in $Y$ consist of $I_{<c_I}$, $I_{>c_I}$ and all the intervals induced by $F$ different of $I$; each of them has rank at least $\beta$. By the inductive hypothesis, $\rk_Y(F\cup\{c_I\})\geq\beta$. Since every prime extension of $F$ is realized by some $c_I$, we conclude $\rk_Y(F)\geq\beta+1$.
\end{proof}

By iterating the Interval Characterization, we obtain the following useful corollary.

\begin{corollary}[$\gamma$-level Interval Bound]\label{gamma interval bound}
Let $Y$ be a linear order, $\gamma$ a limit ordinal or $0$, and $n\in\w$. If $\rk(Y)\geq\gamma+n$, then there exist $s_1<\dots<s_{2^n-1}$ in $Y$ such that all the $2^n$ intervals induced by $\{s_1,\dots,s_{2^n-1}\}$ in $Y$ have rank at least $\gamma$.
\end{corollary}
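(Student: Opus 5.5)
We argue by induction on $n$, proving the slightly stronger relative statement: if $F\in\age(Y)$ and $\rk_Y(F)\geq\gamma+n$, then every interval $I$ induced by $F$ in $Y$ contains points $s_1<\dots<s_{2^n-1}$ such that all intervals induced by $F\cup\{s_1,\dots,s_{2^n-1}\}$ in $Y$ have rank at least $\gamma$. The corollary is the case $F=\emptyset$, where $I_0(\emptyset,Y)=Y$. Alternatively, using Proposition~\ref{interval characterization} (which says $\rk(Y)\geq\gamma+n$ iff $\rk_Y(\emptyset)\geq\gamma+n$), we can work directly with linear orders and avoid carrying $F$ along. We take this route: the claim is that if $\rk(Y)\geq\gamma+n$, then $Y$ can be cut by $2^n-1$ points into $2^n$ convex pieces, each of rank at least $\gamma$ when viewed as an independent linear order.

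For $n=0$ there is nothing to cut: the single interval $Y$ has rank at least $\gamma$. For the inductive step, suppose $\rk(Y)\geq\gamma+n+1$. Then the unique prime extension of $\emptyset$ has a realization $\{c\}$ with $\rk_Y(\{c\})\geq\gamma+n$. By the Interval Characterization, $\rk(Y_{<c})\geq\gamma+n$ and $\rk(Y_{>c})\geq\gamma+n$. Applying the induction hypothesis to each half yields $2^n-1$ points in $Y_{<c}$ and $2^n-1$ points in $Y_{>c}$. Together with $c$, this gives $2(2^n-1)+1=2^{n+1}-1$ points in $Y$.

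The induced intervals in $Y$ are exactly the $2^n$ pieces from $Y_{<c}$ together with the $2^n$ pieces from $Y_{>c}$. This holds because $c$ separates the two halves, so an interval of the combined point set in $Y$ coincides, as a linear order, with the corresponding interval inside the relevant half. Rank is intrinsic to the linear order, so each of these $2^{n+1}$ pieces has rank at least $\gamma$, which completes the induction.

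The only point needing care is this identification of intervals. Since $\rk(I)$ is computed in $I$ as an independent structure, there is no ambient-dependence issue, and nothing here is a real obstacle. Note also that the hypothesis that $\gamma$ is a limit ordinal or $0$ is not actually used; the argument works for any ordinal $\gamma$.
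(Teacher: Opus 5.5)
Your proof is correct and is essentially the paper's argument: induction on $n$, using the Interval Characterization to find a point $c$ with $\rk(Y_{<c})\geq\gamma+n$ and $\rk(Y_{>c})\geq\gamma+n$, then applying the induction hypothesis to each side and adding $c$. Your side remark is also accurate: the hypothesis that $\gamma$ is a limit ordinal or $0$ is never used, since $\gamma+(n+1)=(\gamma+n)+1$ holds for every ordinal $\gamma$.
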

\begin{proof}
By induction on $n$. For $n=0$, the statement is that $\rk(Y)\geq\gamma$, which holds. For $n+1$: Proposition~\ref{interval characterization} gives $c\in Y$ with $\rk(Y_{<c})\geq\gamma+n$ and $\rk(Y_{>c})\geq\gamma+n$. By the inductive hypothesis, each half contains $2^n-1$ elements whose $2^n$ intervals all have rank at least $\gamma$. Together with $c$, we have a set of size $2^{n+1}-1$  inducing $2^{n+1}$ intervals with rank at least $\gamma$.
\end{proof}

Next, we study how the rank behaves under ordinal sums and reversal orderings.

\begin{proposition}\label{rank of sums}
For any two linear orders $A$ and $B$,
$$\rk(A+B)\leq\max(\rk(A),\rk(B))+1.$$
\end{proposition}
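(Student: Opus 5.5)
The plan is to work entirely through the Interval Characterization (Proposition~\ref{interval characterization}). Set $\delta=\max(\rk(A),\rk(B))$ and suppose toward a contradiction that $\rk(A+B)\geq\delta+2$. Since $\emptyset$ has the single induced interval $A+B$, and every element $c\in A+B$ realizes the unique prime extension of $\emptyset$, we get some $c\in A+B$ with $\rk_{A+B}(\{c\})\geq\delta+1$. By Proposition~\ref{interval characterization}, both intervals $(A+B)_{<c}$ and $(A+B)_{>c}$ then have rank at least $\delta+1$.

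By symmetry, assume $c\in A$; the case $c\in B$ is the mirror argument with the roles of the two intervals exchanged. In that case $(A+B)_{<c}=A_{<c}$ is a substructure of $A$. By Corollary~\ref{1.13}, $\rk(A_{<c})\leq\rk(A)\leq\delta$, which contradicts $\rk(A_{<c})\geq\delta+1$.

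I should double-check that the interval ranks in Proposition~\ref{interval characterization} are computed in the interval viewed as an independent linear order. They are, so Corollary~\ref{1.13} applies directly to $A_{<c}\leq A$. This shows that $\rk(A+B)\not\geq\delta+2$, hence $\rk(A+B)\leq\delta+1$.

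Two degenerate cases need a word. If $A+B=\emptyset$, then $\rk=\infty$ by the definition; I would note that the statement is meant for nonempty orders, or handle it by the convention. If $\delta=\infty$, the bound is trivial.

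The main subtle point is the step from $\rk(A+B)\geq\delta+2$ to the existence of a single $c$ with $\rk_{A+B}(\{c\})\geq\delta+1$. This is immediate from the successor clause of Definition~\ref{2:definition_rankFunction} applied to $\emptyset$, since $\delta+2$ is a successor ordinal. So no limit-stage issue arises, and the argument needs no induction.
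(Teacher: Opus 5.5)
Your argument is correct and is essentially the paper's own proof: assume $\rk(A+B)\geq\delta+2$, use the Interval Characterization to find a cut $c$ whose two sides both have rank at least $\delta+1$, and derive a contradiction from $A_{<c}\leq A$ (if $c\in A$) or $B_{>c}\leq B$ (if $c\in B$) via Corollary~\ref{1.13}. One correction to your aside on degenerate cases: the empty order has rank $0$, not $\infty$, since the one-point extension of $\emptyset$ cannot be realized in it, so no special convention is needed and your main argument already covers that case.
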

\begin{proof}
Let $\delta=\max(\rk(A),\rk(B))$ and suppose $\rk(A+B)\geq\delta+2$. By Proposition~\ref{interval characterization}, there exists $c\in A+B$ such that both intervals induced by $\{c\}$ in $A+B$ have rank at least $\delta+1$. 
If $c\in A$: $(A+B)_{<c}=A_{<c}\leq A$, 
then $\rk(A_{<c})\leq\rk(A)\leq\delta$, a contradiction. If $c\in B$: $(A+B)_{>c}=B_{>c}\leq B$, then $\rk(B_{>c})\leq\rk(B)\leq\delta$, again a contradiction.
\end{proof}

\begin{proposition}[Reversal invariance]\label{reversal invariance}
For every linear order $Y$, $\rk(Y)=\rk(Y^*)$.
\end{proposition}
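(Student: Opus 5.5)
The natural approach is to show something stronger and symmetric: for every $F\in\age(Y)$ and every ordinal $\alpha$, $\rk_Y(F)\geq\alpha$ if and only if $\rk_{Y^*}(F)\geq\alpha$. Taking $F=\emptyset$ then gives $\rk(Y)=\rk(Y^*)$. Since $(Y^*)^*=Y$, it is enough to prove one implication, $\rk_Y(F)\geq\alpha\Rightarrow\rk_{Y^*}(F)\geq\alpha$, for all linear orders $Y$ simultaneously. We argue by induction on $\alpha$.

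The key observation concerns how $F$ partitions $Y^*$. The set $Y^*$ has the same underlying universe as $Y$, and $F$ is a finite subset of both. The intervals induced by $F$ in $Y^*$ are exactly the intervals induced by $F$ in $Y$, taken in reverse order of index, and each is carried with its reversed ordering. Concretely, with $m=|F|$, $I_k(F,Y^*)=I_{m-k}(F,Y)^*$ as linear orders. By Remark~\ref{prime extensions in linear orders}, prime extensions of $F$ correspond to intervals. A realization $F\cup\{c\}$ in $Y$ of the extension at position $k$ is, as a set, a realization in $Y^*$ of the extension at position $m-k$, because the finite substructure $F\cup\{c\}$ of $Y^*$ is just the reversal of that in $Y$.

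With this in hand, the inductive step works directly from Definition~\ref{2:definition_rankFunction}, without going through the Interval Characterization. The case $\alpha=0$ is trivial. The limit case follows immediately from the inductive hypothesis. For $\alpha=\beta+1$, let $B$ be a prime extension of $F$ in the class, placing the new point in position $k'$ relative to the order of $Y^*$; this corresponds to interval position $k=m-k'$ relative to $Y$. Since $\rk_Y(F)\geq\beta+1$, some $c\in I_k(F,Y)$ has $\rk_Y(F\cup\{c\})\geq\beta$. By the inductive hypothesis, $\rk_{Y^*}(F\cup\{c\})\geq\beta$, and $F\cup\{c\}$ realizes $B$ in $Y^*$ because $c\in I_{k'}(F,Y^*)$. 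Hence $\rk_{Y^*}(F)\geq\beta+1$.

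The only delicate point, and I expect it to be the main obstacle, is stating cleanly what "realization" means when passing between $Y$ and $Y^*$. A realization is required to be isomorphic over $F$ to an abstract prime extension, and the ordering on $F$ itself is reversed in $Y^*$. So the correspondence has to be phrased as a bijection between positions $k\leftrightarrow m-k$, not as an identity of abstract extensions. Once that bookkeeping is made explicit via Remark~\ref{prime extensions in linear orders}, the induction is routine. An alternative route is to induct using Proposition~\ref{interval characterization} and $I_k(F,Y^*)=I_{m-k}(F,Y)^*$, but that would require the statement for the intervals themselves, which is circular at the same ordinal. The direct induction on $F$ is therefore preferable.
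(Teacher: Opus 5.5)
Your proof is correct, but it takes a different route from the paper. The paper proves only the case $F=\emptyset$, by induction on $\alpha$ over all linear orders at once. For $\alpha=\beta+1$ it invokes Proposition~\ref{interval characterization} to get $c\in Y$ with $\rk(Y_{<c})\geq\beta$ and $\rk(Y_{>c})\geq\beta$. It then observes $(Y_{<c})^*=(Y^*)_{>c}$ and $(Y_{>c})^*=(Y^*)_{<c}$, and applies the inductive hypothesis at $\beta$ to these two half-orders.

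You instead work straight from Definition~\ref{2:definition_rankFunction} and prove the stronger pointwise identity $\rk_Y(F)=\rk_{Y^*}(F)$ for all $F\in\age(Y)$. You apply the inductive hypothesis to $F\cup\{c\}$ in the same ambient order, and your only input is the position bijection $k\leftrightarrow m-k$ from Remark~\ref{prime extensions in linear orders}. Your version does not depend on the Interval Characterization. It is essentially a transport-of-structure argument: reversal maps $\cF$ onto itself and carries prime extensions and realizations to prime extensions and realizations. It therefore adapts to any duality of this kind, e.g.\ reversing all arcs of a tournament. The paper's version is shorter because the Interval Characterization is already in hand.

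Your closing remark is inaccurate, though: the Interval Characterization route is not circular. The paper unpacks one successor step of the definition and applies the characterization to $\{c\}$ at level $\beta$. So the inductive hypothesis is needed only at $\beta<\alpha$, for the different orders $Y_{<c}$ and $Y_{>c}$. This is legitimate precisely because the induction quantifies over all linear orders simultaneously.
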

\begin{proof}
We prove $\rk(Y)\geq\alpha\Rightarrow\rk(Y^*)\geq\alpha$ by induction on $\alpha$, the reverse implication follows since $(Y^*)^*=Y$. 
The cases $\alpha=0$ and $\alpha$ limit are immediate. Let $\alpha=\beta+1$. By Proposition~\ref{interval characterization}, there exists $c\in Y$ with $\rk(Y_{<c})\geq\beta$ and $\rk(Y_{>c})\geq\beta$. Since $(Y_{<c})^*=(Y^*)_{>c}$, $(Y_{>c})^*=(Y^*)_{<c}$, and by the inductive hypothesis: $\rk((Y^*)_{<c})\geq\beta$ and $\rk((Y^*)_{>c})\geq\beta$. Therefore $\rk(Y^*)\geq\beta+1$.
\end{proof}

The following lemma provides the key upper-bound tool for computing the rank of infinite linear orders.

\begin{lemma}[Pigeonhole for intervals]\label{pigeonhole for intervals}
Let $Y$ be a linear order, $\gamma$ a limit ordinal, $r\in \omega$ and $\{M_0, M_1, \dots, M_{r-1}\}$ pairwise disjoint convex\footnote{A subset $S$ of a linear order $Y$ is \emph{convex} if for every $a,b\in S$ and $y\in Y$ with $a<y<b$, we have $y\in S$.} suborders of $Y$ such that $x<y$ for every $x\in M_i$ and $y\in M_j$ with $i<j < r$. Also assume the following:
\begin{enumerate}
    \item every tail $(M_p)_{>c}$, with $p\leq r-1$ and $c\in M_p$, has rank at least $\gamma$; and
    \item every convex suborder of $Y$, that does not contain tails of any $M_p$ for $p\leq r-1$, has rank strictly below $\gamma$.
\end{enumerate}
Then $\rk(Y)<\gamma+n$ for every $n$ with $2^n>r$.
\end{lemma}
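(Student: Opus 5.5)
We argue by contradiction. Suppose $\rk(Y)\geq\gamma+n$ with $2^n>r$. By the $\gamma$-level Interval Bound (Corollary~\ref{gamma interval bound}) there are $s_1<\dots<s_{2^n-1}$ in $Y$ such that each of the $2^n$ intervals $J_0,\dots,J_{2^n-1}$ induced by $\{s_1,\dots,s_{2^n-1}\}$ has rank at least $\gamma$. Each $J_i$ is a convex suborder of $Y$, so by hypothesis~(2) each $J_i$ must contain a tail of some $M_p$. The plan is to assign to each $i$ an index $p(i)<r$ with $(M_{p(i)})_{>c}\subseteq J_i$ for some $c\in M_{p(i)}$. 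Since $2^n>r$, pigeonhole gives $i<i'$ with $p(i)=p(i')=p$.

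The contradiction then comes from convexity and disjointness. The intervals $J_i$ and $J_{i'}$ are disjoint, and $J_i$ lies entirely below $s_{i+1}\leq s_{i'}$, which in turn lies below all of $J_{i'}$. We have a tail $(M_p)_{>c}\subseteq J_i$ and a tail $(M_p)_{>c'}\subseteq J_{i'}$. Both tails are nonempty: a tail has rank at least $\gamma>0$ by hypothesis~(1), and the empty order has rank $0$. Pick $x\in(M_p)_{>\max(c,c')}$, which is nonempty as a tail of a tail. Then $x$ lies in both $(M_p)_{>c}\subseteq J_i$ and $(M_p)_{>c'}\subseteq J_{i'}$, contradicting $J_i\cap J_{i'}=\emptyset$. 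So no two intervals can contain tails of the same $M_p$, and $2^n\leq r$, which is the desired contradiction.

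The care needed is mostly in the definitions. First, ``contains a tail of $M_p$'' should mean $(M_p)_{>c}\subseteq J_i$ for some $c\in M_p$; the contrapositive of~(2) delivers exactly that. Second, the tails must be nonempty, which hypothesis~(1) guarantees as explained above. Note also that the sets $J_i\cap M_p$ are convex in $M_p$, so the tail-intersection argument does not even need the ordering between distinct $M_p$. The main potential obstacle is the degenerate case where some $M_p$ has a maximum, so that $(M_p)_{>\max}=\emptyset$ has rank $0<\gamma$. Hypothesis~(1) excludes this, so every $M_p$ has no last element and every tail is nonempty. The hypotheses that the $M_p$ are pairwise disjoint and ordered are then not needed beyond this.
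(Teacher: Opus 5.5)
Your proof is correct and follows the paper's route: Corollary~\ref{gamma interval bound} gives $2^n$ pairwise disjoint intervals of rank at least $\gamma$, hypothesis~(2) assigns to each one an index $p<r$ of an $M_p$ whose tail it contains, and a counting argument forces $2^n\leq r$. The only difference is in how that counting step is justified. The paper shows the assignment is strictly increasing, using the order between the $M_p$ and the fact that each $M_p$ has no maximum. You show it is injective because two tails of the same $M_p$ always meet, which, as you note, makes the ordering and disjointness of the $M_p$ unnecessary.
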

\begin{proof}
Suppose $\rk(Y)\geq\gamma+n$ with $2^n>r$. By Corollary~\ref{gamma interval bound}, there exist $s_1<\dots<s_{2^n-1}$ in $Y$ whose $2^n$ intervals $I_0,\dots,I_{2^n-1}$ all have rank at least $\gamma$. By~(2), each $I_i$ must contain a tail of some $M_p$ with $p\leq r-1$. For each $i$, let $\varphi(i)$ be the largest such $p$. Condition~(1) implies that each $M_p$ has no maximum (otherwise the empty tail would have rank $0<\gamma$), then $s_{i+1}$ exceeds all the elements of $M_{\varphi(i)}$, giving $\varphi(i)<\varphi(i+1)$. Then $\varphi:\{0,\dots,2^n-1\}\to\{0,\dots,r-1\}$ is strictly increasing, which requires $2^n\leq r$, a contradiction.
\end{proof}

We now compute the rank of countable ordinals viewed as linear orders, and deduce the Rank Property as a corollary.

\begin{remark}[Additive principality]\label{additive principal}
For every $\beta\geq 1$ and every $c<\w^\beta$, 
the tail $(\w^\beta)_{>c}$ has order type $\w^\beta$, i.e, $(\w^\beta)_{>c}\cong\w^\beta$ for ever $c<\omega^\beta$.
\end{remark}

\begin{theorem}\label{rank of omega power times m}
For every ordinal $\beta\geq 1$ and every finite $m\geq 1$,
$$\rk(\w^\beta\cdot m)=\w\cdot\beta+\lfloor\log_2 m\rfloor.$$
\end{theorem}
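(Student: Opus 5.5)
Write $k=\lfloor\log_2 m\rfloor$, so that $2^k\le m<2^{k+1}$. The plan is a transfinite induction on $\beta$, proving for each $\beta$ both bounds for all $m\geq 1$ at once. The only tools needed are the Interval Characterization (Proposition~\ref{interval characterization}), Corollary~\ref{gamma interval bound}, the Pigeonhole Lemma~\ref{pigeonhole for intervals}, monotonicity under suborders (Corollary~\ref{1.13}), and additive principality (Remark~\ref{additive principal}).

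\textbf{Step 1 (the case $m=1$, lower bound $\rk(\w^\beta)\geq\w\cdot\beta$).}
\begin{itemize}
\item \emph{Successor $\beta=\delta+1$.} Here $\w^\beta=\w^\delta\cdot\w$ contains $\w^\delta\cdot N$ for every $N$. By induction, $\rk(\w^\delta\cdot 2^j)\geq\w\cdot\delta+j$ for every $j$. For $\delta=0$ this instead follows from Theorem~\ref{rank of finite linear orders}, since $\w$ contains arbitrarily long finite orders. Hence $\rk(\w^\beta)\geq\w\cdot\delta+\w=\w\cdot\beta$.
\item \emph{Limit $\beta$.} We have $\w^\beta\supseteq\w^{\delta}$ for all $\delta<\beta$, so $\rk(\w^\beta)\geq\sup_{\delta<\beta}\w\cdot\delta=\w\cdot\beta$.
\end{itemize}

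\textbf{Step 2 (lower bound for general $m$).} Show $\rk(\w^\beta\cdot m)\geq\w\cdot\beta+k$ by induction on $k$. Set $\gamma=\w\cdot\beta$. It suffices to note that $\w^\beta\cdot m\supseteq\w^\beta\cdot(2^{k+1}-1)$ when $m\geq 2^{k+1}-1$, but this does not cover $m=2^k$. So argue directly with Proposition~\ref{interval characterization}.

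Claim: if $m\geq 2^j$, then $\rk(\w^\beta\cdot m)\geq\gamma+j$. Pick $c$ to be the first element of the $(2^{j-1}+1)$-th copy. Then the part below $c$ is $\w^\beta\cdot 2^{j-1}$, and the part above $c$ contains a tail of that copy followed by the remaining copies. By Remark~\ref{additive principal}, this upper part is isomorphic to $\w^\beta\cdot(m-2^{j-1})$, and $m-2^{j-1}\geq 2^{j-1}$. By induction both sides have rank at least $\gamma+j-1$. The base case $j=0$ is Step 1 together with Corollary~\ref{1.13}.

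\textbf{Step 3 (upper bound).} Show $\rk(\w^\beta\cdot m)<\gamma+(k+1)$ using Lemma~\ref{pigeonhole for intervals}. Apply it with $r=m$, the blocks $M_p$ being the $m$ copies of $\w^\beta$, and $n=k+1$, so that $2^n>m$. Condition (1) holds because every tail of $M_p$ is isomorphic to $\w^\beta$ (Remark~\ref{additive principal}), which has rank at least $\gamma$ by Step 1.

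Condition (2) is the main obstacle. A convex suborder $C$ that contains no tail of any copy is a finite sum: possibly a final segment of one copy's proper initial part, then, since $C$ cannot contain a full copy (which would include a tail), a bounded initial segment of the next copy. Each piece is a bounded initial segment of $\w^\beta$, hence has order type below $\w^\beta$, or is a reversal-free suborder of such. So $C$ embeds into some ordinal $\alpha<\w^\beta\cdot 2$ lying inside a bounded segment; more precisely, $C\hookrightarrow \w^{\delta}\cdot N$ for some $\delta<\beta$ and $N<\w$.

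The needed fact is therefore $\rk(\w^\delta\cdot N)<\w\cdot\beta$ for $\delta<\beta$. By the induction hypothesis (the full statement for $\delta$), $\rk(\w^\delta\cdot N)=\w\cdot\delta+\lfloor\log_2N\rfloor<\w\cdot\delta+\w\leq\w\cdot\beta$. For $\delta=0$, finite orders have finite rank by Theorem~\ref{rank of finite linear orders}, which is below $\w\cdot\beta$. Corollary~\ref{1.13} then gives $\rk(C)<\gamma$, completing the verification of condition (2).

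Combining Steps 2 and 3 yields equality. The care needed is in making the statement of condition (2) rigorous: every bounded-in-each-copy convex piece embeds in $\w^\delta\cdot N$ with $\delta<\beta$, which uses that elements of $\w^\beta$ are below $\w^\delta\cdot N$ for some such $\delta,N$.
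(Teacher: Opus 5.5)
Your proof is correct and follows essentially the paper's route: transfinite induction on $\beta$, lower bounds from monotonicity and a binary split via Proposition~\ref{interval characterization}, and the upper bound from Lemma~\ref{pigeonhole for intervals} with the $m$ copies of $\w^\beta$ as blocks. The differences are cosmetic: you get $\rk(\w^\beta)\leq\w\cdot\beta$ from the pigeonhole lemma with $m=1$ rather than from a separate cut argument, you split $\w^\beta\cdot m$ directly rather than first reducing to $m=2^k$, and you treat $\delta=0$ explicitly via Theorem~\ref{rank of finite linear orders}, which the paper leaves implicit.

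One small inaccuracy in your verification of condition~(2): by convexity, a set meeting two copies must contain a tail of the lower copy. So your ``straddling'' case cannot occur, and $C$ in fact lies in a bounded initial part of a single copy. Your bound covers the straddling case anyway, so the argument is unaffected.
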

\begin{proof}
By induction on $\beta$, we prove the statement for all $m\geq 1$ simultaneously. Write $k=\lfloor\log_2 m\rfloor$, then $2^k\leq m<2^{k+1}$. At each stage we first establish $\rk(\w^\beta)=\w\cdot\beta$ (the case $m=1$), then handle general $m$.

\medskip
\textbf{Step 1: $\rk(\w^\beta)=\w\cdot\beta$.}
\medskip

\textit{Lower bound.} If $\beta=\delta+1$ is a successor: $\w^\delta\cdot m\leq\w^{\delta+1}$ for every $m\geq 1$, then by the inductive hypothesis, $\rk(\w^{\delta+1})\geq\rk(\w^\delta\cdot m)=\w\cdot\delta+\lfloor\log_2 m\rfloor$ for all $m$. 
Since $\{\lfloor\log_2 m\rfloor\mid m\in\omega \}$ is unbounded in $\omega$, we get $\rk(\w^{\delta+1})\geq\w\cdot\delta+n$ for all $n<\w$, hence $\rk(\w^{\delta+1})\geq\w\cdot(\delta+1)$.

If $\beta$ is a limit ordinal: $\w^\gamma\leq\w^\beta$ for every $\gamma<\beta$, then $\rk(\w^\beta)\geq\rk(\w^\gamma)=\w\cdot\gamma$ by the inductive hypothesis. 
Since 
$sup\{\w\cdot\gamma\mid\gamma<\beta\}=\w\cdot\beta$, 
we get $\rk(\w^\beta)\geq\w\cdot\beta$.

\textit{Upper bound.} Suppose $\rk(\w^\beta)\geq\w\cdot\beta+1$. By Proposition~\ref{interval characterization}, there exists $c\in\w^\beta$ with $\rk((\w^\beta)_{<c})\geq\w\cdot\beta$. But $(\w^\beta)_{<c}=c$ as an ordinal, and $c<\w^\beta$. 
Since $c<\omega^\gamma\cdot l$ for some $\gamma<\beta$ and $l\in \omega$, then $\rk(c)\leq \omega\cdot\gamma + \lfloor \log_2(l) \rfloor$ by inductive hypotheses or Theorem \ref{rank of finite linear orders} in the case $\beta=1$, thus $\rk(c)< \omega\cdot\beta$.
This contradicts $\rk((\w^\beta)_{<c})\geq\w\cdot\beta$.

\medskip
\textbf{Step 2: $\rk(\w^\beta\cdot m)=\w\cdot\beta+\lfloor\log_2 m\rfloor$.}
\medskip

\textit{Lower bound.} Since $\w^\beta\cdot 2^k\leq\w^\beta\cdot m$, it suffices to show $\rk(\w^\beta\cdot 2^k)\geq\w\cdot\beta+k$. We proceed by induction on $k$. For $k=0$: $\rk(\w^\beta)=\w\cdot\beta$ by Step~1. For $k\geq 1$: let $c$ be the first element of the $(2^{k-1})$-th copy of $\w^\beta$ in $\omega^\beta\cdot2^k$ (indexing from $0$). The left half $(\w^\beta\cdot 2^k)_{<c}$ consists of copies $0,\dots,2^{k-1}-1$ and has order type $\w^\beta\cdot 2^{k-1}$. The right half $(\w^\beta\cdot 2^k)_{>c}$ consists of the tail of copy $2^{k-1}$ (order type $\w^\beta$ by Remark~\ref{additive principal}) followed by copies $2^{k-1}+1,\dots,2^k-1$, giving order type $\w^\beta\cdot 2^{k-1}$. By the inner induction, each half has rank at least $\w\cdot\beta+(k-1)$. Proposition~\ref{interval characterization} gives $\rk(\w^\beta\cdot 2^k)\geq\w\cdot\beta+k$.

\textit{Upper bound.} The $m$ copies of $\w^\beta$ are pairwise disjoint convex suborders of $\w^\beta\cdot m$. We verify the hypotheses of Lemma~\ref{pigeonhole for intervals} with $\gamma=\w\cdot\beta$ and $r=m$. Condition~(1): every tail of a copy has order type $\w^\beta$ (Remark~\ref{additive principal}), hence rank $\gamma$ by Step~1. Condition~(2): since the copies cover $\w^\beta\cdot m$ completely, any convex suborder not containing a tail is contained in an initial segment of some copy, which is an ordinal $c<\w^\beta$ with $\rk(c)<\gamma$ by the argument in the upper bound of Step~1. Since $\gamma=\w\cdot\beta$ is a limit ordinal, Lemma~\ref{pigeonhole for intervals} gives $\rk(\w^\beta\cdot m)<\w\cdot\beta+n$ for every $n$ with $2^n>m$, i.e., $\rk(\w^\beta\cdot m)\leq\w\cdot\beta+k$.
\end{proof}

We now extend the computation to arbitrary countable ordinals.

\begin{theorem}[Rank of countable ordinals]\label{rank CNF}
Let $\alpha\geq\w$ be a countable ordinal with Cantor normal form
$$\alpha=\w^{\beta_1}\cdot c_1+\w^{\beta_2}\cdot c_2+\dots+\w^{\beta_l}\cdot c_l,$$
where $\beta_1>\beta_2>\dots>\beta_l\geq 0$ and $1\leq c_i<\w$. Then
$$\rk(\alpha)=\w\cdot\beta_1+\lfloor\log_2 c_1\rfloor.$$
In particular, the rank depends only on the leading term of the Cantor normal form.
\end{theorem}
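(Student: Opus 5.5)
The proof splits $\alpha=\w^{\beta_1}\cdot c_1+\rho$, with $\rho=\w^{\beta_2}\cdot c_2+\dots+\w^{\beta_l}\cdot c_l<\w^{\beta_1}$ (possibly $\rho=0$). Since $\alpha\geq\w$ we have $\beta_1\geq 1$. Set $\gamma=\w\cdot\beta_1$ and $k=\lfloor\log_2 c_1\rfloor$.

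\textbf{Lower bound.} The ordinal $\w^{\beta_1}\cdot c_1$ is an initial segment of $\alpha$, hence a substructure. Corollary~\ref{1.13} and Theorem~\ref{rank of omega power times m} then give
$$\rk(\alpha)\geq\rk(\w^{\beta_1}\cdot c_1)=\gamma+k.$$

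\textbf{Upper bound.} I would apply Lemma~\ref{pigeonhole for intervals} to $Y=\alpha$, with $\gamma$ as above (a limit ordinal), $r=c_1$, and $M_0,\dots,M_{c_1-1}$ the $c_1$ consecutive copies of $\w^{\beta_1}$ forming the initial segment $\w^{\beta_1}\cdot c_1$. These are pairwise disjoint, convex and ordered as required.

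Condition~(1) holds because every tail of $M_p$ has order type $\w^{\beta_1}$ by Remark~\ref{additive principal}, hence rank $\gamma$ by Step~1 of Theorem~\ref{rank of omega power times m}.

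For condition~(2), let $C\subseteq\alpha$ be convex and contain no tail of any $M_p$. Then $C$ cannot meet two different copies $M_p$ and $M_{p'}$ with $p<p'$: by convexity $C$ would contain everything between, in particular a tail of $M_p$. The same argument shows that if $C$ meets some $M_p$ and also the remainder $R=\alpha\setminus\w^{\beta_1}\cdot c_1$, then $C$ contains a tail of $M_p$. So there are two cases.
\begin{itemize}
\item $C\subseteq M_p$ and $C$ is bounded above in $M_p$. Then $C$ embeds into some $c<\w^{\beta_1}$, and $\rk(c)<\gamma$ by the argument in the upper bound of Step~1.
\item $C\subseteq R$. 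The remainder $R$ has order type $\rho<\w^{\beta_1}$. Since $\rho<\w^{\delta}\cdot l$ for some $\delta<\beta_1$ and $l<\w$, Theorem~\ref{rank of omega power times m} gives $\rk(\rho)\leq\w\cdot\delta+\lfloor\log_2 l\rfloor<\gamma$. For $\delta=0$ one uses Theorem~\ref{rank of finite linear orders} instead.
\end{itemize}
In both cases Corollary~\ref{1.13} yields $\rk(C)<\gamma$, so condition~(2) holds.

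Lemma~\ref{pigeonhole for intervals} now gives $\rk(\alpha)<\gamma+n$ whenever $2^n>c_1$. Taking $n=k+1$ yields $\rk(\alpha)\leq\gamma+k$.

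\textbf{Main obstacle.} The only delicate point is the case analysis for condition~(2), namely that a convex set bridging two pieces must swallow a tail. This relies on each $M_p$ having no maximum, which holds because $\beta_1\geq 1$. It also uses that initial segments of $\w^{\beta_1}$ and the remainder $\rho$ are bounded by ordinals strictly below $\w^{\beta_1}$, where the rank computation of Theorem~\ref{rank of omega power times m} already places them below $\gamma$.
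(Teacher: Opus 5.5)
Your proof is correct and follows the paper's argument essentially step for step. The lower bound comes from the initial segment $\w^{\beta_1}\cdot c_1$ via Corollary~\ref{1.13} and Theorem~\ref{rank of omega power times m}, and the upper bound from Lemma~\ref{pigeonhole for intervals} with $\gamma=\w\cdot\beta_1$ and $r=c_1$, checking condition~(2) by splitting into convex sets inside a single copy versus inside the remainder $\rho$. Your version is slightly more explicit than the paper's: it spells out why a convex set meeting two pieces must contain a tail, and it derives $\rk(\rho)<\gamma$ from $\rho<\w^{\delta}\cdot l$ with $\delta<\beta_1$.
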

\begin{proof}
Write $\alpha=\w^{\beta_1}\cdot c_1+\rho$ where $\rho=\w^{\beta_2}\cdot c_2+\dots+\w^{\beta_l}\cdot c_l<\w^{\beta_1}$.

\textit{Lower bound.} Since $\w^{\beta_1}\cdot c_1\leq\alpha$, Corollary~\ref{1.13} and Theorem~\ref{rank of omega power times m} give $\rk(\alpha)\geq\rk(\w^{\beta_1}\cdot c_1)=\w\cdot\beta_1+\lfloor\log_2 c_1\rfloor$.

\textit{Upper bound.} The $c_1$ copies of $\w^{\beta_1}$ (from the leading term) are pairwise disjoint convex suborders of $\alpha$. We verify the hypotheses of Lemma~\ref{pigeonhole for intervals} with $\gamma=\w\cdot\beta_1$ and $r=c_1$. Condition~(1): every tail of a copy has order type $\w^{\beta_1}$ (Remark~\ref{additive principal}), hence rank $\gamma$ by Theorem~\ref{rank of omega power times m}. Condition~(2): a convex suborder not containing a tail of any copy is contained either in a single copy $M_p\cong\w^{\beta_1}$ (where it is bounded above, hence in an initial segment of $M_p$ — an ordinal below $\w^{\beta_1}$ with rank less than $\gamma$ by Step~1 of Theorem~\ref{rank of omega power times m}), or in the remainder $\rho$, whose rank is less than $\gamma$ by the same argument since $\rho<\w^{\beta_1}$. Since $\gamma=\w\cdot\beta_1$ is a limit ordinal ($\beta_1\geq 1$), then $\rk(\alpha)\leq\w\cdot\beta_1+\lfloor\log_2 c_1\rfloor$.
\end{proof}

\begin{corollary}\label{RP for linear orders}
The class of finite linear orders has the Rank Property.
\end{corollary}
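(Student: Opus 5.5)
The plan is to realize each countable ordinal $\alpha$ as the rank of a suitable countable ordinal (or finite linear order), using Theorem~\ref{rank of finite linear orders} for finite values and Theorem~\ref{rank CNF} for infinite values. Fix $\alpha<\w_1$ and write $\alpha=\w\cdot\beta+k$ with $\beta<\w_1$ and $k<\w$; this decomposition exists and is unique by ordinal division by $\w$.

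\textbf{Case $\beta=0$.} Here $\alpha=k$ is finite. Take $Y$ to be a linear order with $2^k-1$ elements; for $k=0$ this is the empty order. By Theorem~\ref{rank of finite linear orders}, $\rk(Y)=\lfloor\log_2 2^k\rfloor=k$.

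\textbf{Case $\beta\geq 1$.} Take the countable ordinal $\w^\beta\cdot 2^k$. Its Cantor normal form has the single term $\w^{\beta}\cdot 2^k$ with $\beta\geq 1$, so it is $\geq\w$. Theorem~\ref{rank CNF}, or directly Theorem~\ref{rank of omega power times m}, then gives
\[
\rk(\w^\beta\cdot 2^k)=\w\cdot\beta+\lfloor\log_2 2^k\rfloor=\w\cdot\beta+k=\alpha.
\]
Since $\beta<\w_1$, the ordinal $\w^\beta\cdot 2^k$ is countable, so it is an element of $\sigma\cF$.

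Together the two cases show that every $\alpha<\w_1$ is attained, which is the Rank Property. There is no real obstacle beyond the bookkeeping of writing $\alpha$ as $\w\cdot\beta+k$ and checking that $\beta<\w_1$ keeps the witness countable. All of the difficulty has already been absorbed into Theorem~\ref{rank of omega power times m}.
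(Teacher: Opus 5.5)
Your proposal is correct and is essentially the paper's own proof. The paper handles $\alpha<\w$ with a finite linear order of size $2^\alpha-1$ via Theorem~\ref{rank of finite linear orders}, and handles $\alpha=\w\cdot\beta+n\geq\w$ with the witness $\w^\beta\cdot 2^n$ via Theorem~\ref{rank of omega power times m}, exactly as you do.
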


\begin{proof}
For $\alpha<\w$, any finite linear order of size $2^\alpha-1$ has rank $\alpha$ by Theorem~\ref{rank of finite linear orders}. For $\alpha\geq\w$, write $\alpha=\w\cdot\beta+n$ with $\beta\geq 1$ and $n<\w$; then $\rk(\w^\beta\cdot 2^n)=\w\cdot\beta+n=\alpha$ by Theorem~\ref{rank of omega power times m}.
\end{proof}

The class of finite linear orders does not satisfy the free amalgamation property, then the Rank Property cannot be deduced from Theorem~\ref{every rank with extra caso general}. The Interval Characterization (Proposition~\ref{interval characterization}) replaces the kernel machinery: it reduces the rank computation to controlling the ranks of intervals, just as the kernel machinery reduces it to controlling the ranks of disconnected components.

\begin{corollary}\label{rank of omega star etc}
$\rk(\w^*)=\w$ and $\rk((\w^\beta\cdot m)^*)=\w\cdot\beta+\lfloor\log_2 m\rfloor$ for every $\beta\geq 1$ and $m\geq 1$.
\end{corollary}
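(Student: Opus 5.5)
This is immediate from Proposition~\ref{reversal invariance} combined with the earlier computations. We have $\rk(Y^*)=\rk(Y)$ for every linear order $Y$. Taking $Y=\w^\beta\cdot m$ with $\beta\geq 1$ and $m\geq 1$ gives
\[
\rk((\w^\beta\cdot m)^*)=\rk(\w^\beta\cdot m)=\w\cdot\beta+\lfloor\log_2 m\rfloor
\]
by Theorem~\ref{rank of omega power times m}.

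For the first claim, note that $\w^*=(\w^1\cdot 1)^*$. The formula then yields $\rk(\w^*)=\w\cdot 1+\lfloor\log_2 1\rfloor=\w$. Alternatively, one can cite Theorem~\ref{rank CNF} with $\alpha=\w$ together with reversal invariance.

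The only point to verify is that Proposition~\ref{reversal invariance} was proved for arbitrary linear orders, including infinite ones. It was: the argument goes through the Interval Characterization (Proposition~\ref{interval characterization}) by induction on $\alpha$, with no finiteness assumption. There is no real obstacle here; the corollary is a direct combination of two established results.
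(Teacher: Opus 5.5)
Your proposal is correct and matches the paper's proof, which likewise obtains the corollary directly from Proposition~\ref{reversal invariance} and Theorem~\ref{rank of omega power times m}. Your handling of $\w^*$ as the case $\beta=m=1$, and your check that reversal invariance holds for infinite linear orders, are both fine.
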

\begin{proof}
Immediate from Proposition~\ref{reversal invariance} and Theorem~\ref{rank of omega power times m}.
\end{proof}

The bound $\rk(A+B)\leq\max(\rk(A),\rk(B))+1$ from Proposition~\ref{rank of sums} can be sharp when $\rk(A)=\rk(B)$: taking $A=B=\w$ gives $\rk(\w\cdot 2)=\w+1$. However, it is not sharp in general: $\rk(\w+n)=\w$ for every finite $n$ (since the leading Cantor normal form term of $\w+n$ is $\w^1\cdot 1$), while the bound gives $\w+1$.

\begin{remark}\label{rank vs Hausdorff} For a well-order $\alpha\geq\w$ with Cantor normal form leading term $\w^{\beta_1}\cdot c_1$, the Hausdorff rank $\rk^H(\alpha)$ of scattered linear orders---as defined in~\cite[Definition~1.2]{montalban-equimorphism}, following~\cite[Chapter~5]{rosenstein-book}---equals $\beta_1$. Thus $$\rk(\alpha)=\w\cdot\rk^H(\alpha)+\lfloor\log_2 c_1\rfloor,$$ showing that the rank function refines the Hausdorff rank: it captures not only the ordinal height $\rk^H(\alpha)$ but also a logarithmic measure of the multiplicity of the leading term.
\end{remark}

We now compute the rank of $\bZ\cdot\alpha$ for countable ordinals $\alpha$. We write $\bZ\cdot\alpha=\bigsqcup_{j<\alpha}\bZ^{(j)}$, where $\bZ^{(j)}\cong\bZ$ denotes the $j$-th copy of $\bZ$ in the lexicographic union (so $\bZ^{(j)}$ lies entirely below $\bZ^{(j')}$ in $\bZ\cdot\alpha$ whenever $j<j'$), and identify each $\bZ^{(j)}$ with its image in $\bZ\cdot\alpha$.

    For every linear order $Y$ and $c\in Y$, we call $c$ a cut of $Y$ and $(Y)_{<c}, (Y)_{>c}$ the left and right sides respectively.

\begin{lemma}\label{rank Z finite}
For every finite $m\geq 1$, $\rk(\bZ\cdot m)=\w+\lfloor\log_2(m+1)\rfloor$.
\end{lemma}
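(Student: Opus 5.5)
The plan is to prove the two inequalities separately. The upper bound will use the $\gamma$-level Interval Bound (Corollary~\ref{gamma interval bound}) with $\gamma=\w$. The lower bound will use a binary-splitting induction via the Interval Characterization (Proposition~\ref{interval characterization}). Throughout write $k=\lfloor\log_2(m+1)\rfloor$, so that $2^k-1\leq m<2^{k+1}-1$.

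\emph{Upper bound.} Suppose $\rk(\bZ\cdot m)\geq\w+n$. Corollary~\ref{gamma interval bound} with $\gamma=\w$ gives cuts $s_1<\dots<s_{2^n-1}$ whose $2^n$ induced intervals all have rank $\geq\w$. By Theorem~\ref{rank of finite linear orders}, a finite linear order has finite rank, so every one of these intervals is infinite. If two consecutive cuts $s_i<s_{i+1}$ lay in the same copy $\bZ^{(j)}$, the interval between them would be a bounded subset of $\bZ$, hence finite. Therefore the cuts lie in pairwise distinct copies, which forces $2^n-1\leq m$. So $\rk(\bZ\cdot m)\geq\w+n$ implies $2^n\leq m+1$, and consequently $\rk(\bZ\cdot m)\leq\w+k$. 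This step also shows $\rk(\bZ\cdot m)<\w+k+1$, so the rank is a genuine ordinal rather than $\infty$.

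\emph{Lower bound.} We first strengthen the statement so that it survives splitting. View a linear order as a concatenation of \emph{pieces}, each isomorphic to $\w$ or $\w^*$, possibly with finitely many extra points in between. Then $\bZ\cdot m=(\w^*+\w)\cdot m$ has $2m$ pieces.

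We prove by induction on $k$ the following claim: every linear order $Y$ that contains, as a suborder, a concatenation of at least $2^{k+1}-2$ pieces (each $\w$ or $\w^*$, in any pattern) has $\rk(Y)\geq\w+k$, provided $k\geq1$. For $k=0$ we instead only need that a single piece has rank $\w$, which is Corollary~\ref{rank of omega star etc} (with $\beta=m=1$) and Theorem~\ref{rank of omega power times m}.

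For the inductive step, take $p\geq 2^{k+2}-2$ pieces. Choose a cut $c$ at an element of the piece with index about $p/2$. Cutting inside an $\w$ or $\w^*$ piece leaves an infinite piece of the same type on one side. Consequently each side still contains at least $2^{k+1}-1\geq 2^{k+1}-2$ complete pieces, or, when $k=0$, at least one piece. By the inductive hypothesis and monotonicity (Corollary~\ref{1.13}), both $Y_{<c}$ and $Y_{>c}$ have rank $\geq\w+k$. Proposition~\ref{interval characterization} then gives $\rk(Y)\geq\w+k+1$.

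Applying the claim to $Y=\bZ\cdot m$, which has $2m\geq 2^{k+1}-2$ pieces, yields $\rk(\bZ\cdot m)\geq\w+k$.

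The step I expect to need the most care is the bookkeeping in the inductive step of the lower bound. One must check that a cut placed inside a piece really leaves an entire $\w$ or $\w^*$ on each side; this holds because a proper initial segment of $\w^*$ is again of type $\w^*$, and a final segment of $\w$ is again of type $\w$. One must also check that the finite leftovers do no harm, which is where monotonicity under substructures (Corollary~\ref{1.13}) is used.
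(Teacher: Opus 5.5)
Your upper bound is correct, and it takes a different route from the paper. The paper computes the auxiliary ranks $h(m)=\rk(\w+\bZ\cdot m+\w^*)$ through a recurrence and transfers them to $\bZ\cdot m$ by reversal. You get $\rk(\bZ\cdot m)\geq\w+n\Rightarrow 2^n-1\leq m$ directly from the $\gamma$-level Interval Bound with $\gamma=\w$.

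The gap is in the lower bound, because the strengthened claim is false. For $k=1$ it asserts that any concatenation of $2^2-2=2$ pieces has rank at least $\w+1$. But $Y=\w+\w^*$ has rank exactly $\w$. A cut in the $\w$ leaves a finite left side, and a cut in the $\w^*$ leaves a finite right side. So $\rk_Y(\{c\})<\w$ for every $c$, by Theorem~\ref{rank of finite linear orders} and Proposition~\ref{interval characterization}. More generally, $\w+\bZ\cdot(2^k-2)+\w^*$ consists of $2^{k+1}-2$ pieces but has rank $\w+k-1$ (this is the paper's $h(2^k-2)$). So the claim fails for every $k\geq 1$.

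The step that breaks is exactly the bookkeeping you flagged. The piece containing the cut yields an infinite piece on only \emph{one} side: the right if it is an $\w$, the left if it is an $\w^*$. Hence the piece counts of the two sides always sum to exactly $p$, and which splits are achievable depends on the orientations. With $p=2$ and pattern $\w\,\w^*$, no cut leaves a piece on both sides. So your transition from $k=0$ to $k=1$ fails, and every higher level of the induction inherits the failure.

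Adjusting constants in a pattern-blind count does not help either. The true version is that $2^{k+1}-1$ pieces suffice, and the example above shows this is sharp. Applied to the $2m$ pieces of $\bZ\cdot m$, it gives only $\w+\lfloor\log_2 m\rfloor$. That is one short whenever $m+1$ is a power of $2$, already at $m=1$.

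The remedy is to use copies of $\bZ$ as the unit and prove $\rk(\bZ\cdot(2^k-1))\geq\w+k$ for $k\geq 1$ by induction.
\begin{itemize}
\item For $k=1$, any cut in $\bZ$ leaves $\w^*$ and $\w$, both of rank $\w$.
\item For the step, cut at any element of the copy of index $2^k-1$ in $\bZ\cdot(2^{k+1}-1)$. Each side then contains $2^k-1$ full copies of $\bZ$, so it has rank at least $\w+k$ by the induction hypothesis and Corollary~\ref{1.13}. Proposition~\ref{interval characterization} then gives $\w+k+1$.
\end{itemize}
Since $m\geq 2^k-1$ for $k=\lfloor\log_2(m+1)\rfloor$, monotonicity completes the lower bound. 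This works because a cut inside a copy of $\bZ=\w^*+\w$ sacrifices only that copy, so $2(2^k-1)+1$ copies split evenly. That symmetry is lost once $\w$- and $\w^*$-pieces are counted separately.
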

\begin{proof}
We use the auxiliary function $h(m)=\rk(\w+\bZ\cdot m+\w^*)$ for $m\geq 0$, with $h(0)=\rk(\w+\w^*)=\w$ (every cut $c$ in $\w+\w^*$ has one finite side).

A cut at position $c$ in the $j$-th copy of $\bZ$ (with $0\leq j\leq m-1$) in $\w+\bZ\cdot m+\w^*$ gives:
\begin{itemize}
    \item left side $\cong\w+\bZ\cdot j+\w^*$ (rank $h(j)$),
    \item right side $\cong\w+\bZ\cdot(m-j-1)+\w^*$ (rank $h(m-j-1)$).
\end{itemize}
Cuts in the initial $\w$ or the final $\w^*$ have one finite side. By Proposition~\ref{interval characterization},
$$h(m)=\left(\max_{0\leq j\leq m-1}\min(h(j),h(m-j-1))\right)+1.$$
Since $h$ is non-decreasing (as $j\leq k$ gives $\w+\bZ\cdot j+\w^*\leq\w+\bZ\cdot k+\w^*$), $\min(h(j),h(m-j-1))=h(\min(j,m-j-1))$, and the map $j\mapsto\min(j,m-j-1)$ on $\{0,\dots,m-1\}$ attains its maximum at $j=\lfloor(m-1)/2\rfloor$. Hence $h(m)=h(\lfloor(m-1)/2\rfloor)+1$ for $m\geq 1$. By induction on $m$ with base $h(0)=\w=\w+\lfloor\log_2 1\rfloor$, using the identity $\lfloor(m-1)/2\rfloor+1=\lfloor(m+1)/2\rfloor$ and the 
binary-logarithm identity $\lfloor\log_2\left( \frac{n}{2}\right)\rfloor+1=\lfloor\log_2 n\rfloor$ for $n\geq 2$ (applied with $n=m+1$): $h(m)=\w+\lfloor\log_2\left(\frac{m+1}{2}\right)\rfloor+1=\w+\lfloor\log_2(m+1)\rfloor$.

We now show $\rk(\bZ\cdot m)=h(m)$. A cut in the $j$-th copy of $\bZ$ in $\bZ\cdot m$ gives left side $\bZ\cdot j+\w^*\cong(\w+\bZ\cdot j)^*$ and right side $\w+\bZ\cdot(m-j-1)\cong(\bZ\cdot(m-j-1)+\w^*)^*$. By Proposition~\ref{reversal invariance}, the left side has rank $\rk(\w+\bZ\cdot j)$ and the right side has rank $\rk(\bZ\cdot(m-j-1)+\w^*)$.

We claim $\rk(\w+\bZ\cdot k)=\rk(\bZ\cdot k+\w^*)=h(k)$ for all $k\geq 0$. By reversal, $\rk(\w+\bZ\cdot k)=\rk(\bZ\cdot k+\w^*)$, then it suffices to show $\rk(\w+\bZ\cdot k)=h(k)$. A cut in the $j$-th copy of $\bZ$ in $\w+\bZ\cdot k$ gives left side $\w+\bZ\cdot j+\w^*$ (rank $h(j)$) and right side $\w+\bZ\cdot(k-j-1)$. Cuts in $\w$ have finite left side. Thus $\rk(\w+\bZ\cdot k)=\left(\max_j\min(h(j),\rk(\w+\bZ\cdot(k-j-1)))\right)+1$. By induction on $k$ (with base $\rk(\w)=\w=h(0)$), if $\rk(\w+\bZ\cdot l)=h(l)$ for $l<k$, then $\rk(\w+\bZ\cdot k)=\left(\max_j\min(h(j),h(k-j-1))\right)+1=h(k)$.

Therefore $\rk(\bZ\cdot m)=\left(\max_j\min(h(j),h(m-j-1))\right)+1=h(m)$.
\end{proof}

Compare with $\rk(\w\cdot m)=\w+\lfloor\log_2 m\rfloor$ (Theorem~\ref{rank of omega power times m}). The shift from $\lfloor\log_2 m\rfloor$ to $\lfloor\log_2(m+1)\rfloor$ reflects the symmetry of $\bZ$: a cut in the $j$-th copy of $\bZ$ contributes rank $\w$ to \emph{both} sides (via $\w^*$ on the left and $\w$ on the right), effectively sharing one unit of rank between the two halves.

For infinite $\alpha$, we apply the Pigeonhole lemma directly to the copies of $\bZ\cdot\w^\beta$. A notational point: we write $1+\beta$ for the ordinal sum with $1$ on the left; for finite $\beta$, $1+\beta=\beta+1$, but for $\beta\geq\w$, $1+\beta=\beta$. The distinction matters because the correct formula involves $\w\cdot(1+\beta)$, not $\w\cdot(\beta+1)$.

\begin{theorem}\label{rank of Z times omega power}
For every ordinal $\beta\geq 1$:
\begin{enumerate}
\item[(a)] For every finite $m\geq 1$, $\rk(\bZ\cdot\w^\beta\cdot m)=\w\cdot(1+\beta)+\lfloor\log_2 m\rfloor$.
\item[(b)] For every $c\in \bZ\cdot\w^\beta$, $\rk((\bZ\cdot\w^\beta)_{>c})\geq\w\cdot(1+\beta)$.
\item[(c)] For every $j<\w^\beta$, $\rk(\bZ\cdot j+\w^*)<\w\cdot(1+\beta)$.
\item[(d)] For $c\in\bZ^{(j)}$ and $d\in\bZ^{(j')}$ with $j\leq j'<\w^\beta$, $\rk((c,d))<\w\cdot(1+\beta)$.
\end{enumerate}
\end{theorem}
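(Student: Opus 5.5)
The plan is a single induction on $\beta\geq 1$, proving (a)--(d) together. Within each stage $\beta$ the order is: first (c) and (d), which only use smaller exponents; then the lower bound in (a); then (b); and finally the upper bound in (a) via Lemma~\ref{pigeonhole for intervals}. Throughout write $\gamma_\beta:=\w\cdot(1+\beta)$. This is a limit ordinal, and $\gamma_\beta=\sup_{\delta<\beta}\gamma_\delta$ when $\beta$ is a limit.

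\textbf{Step 1: (c) and (d).} Fix $j<\w^\beta$.
\begin{itemize}
\item Choose $\delta<\beta$ and $l\in\w$ with $j<\w^\delta\cdot l$.
\item The order $\bZ\cdot j+\w^*$ embeds into $\bZ\cdot\w^\delta\cdot(l+1)$: send the final $\w^*$ into the negative part of one further copy of $\bZ$.
\item If $\delta\geq 1$, Corollary~\ref{1.13} and (a) at stage $\delta$ give
\[
\rk(\bZ\cdot j+\w^*)\leq\gamma_\delta+\lfloor\log_2(l+1)\rfloor<\gamma_\beta.
\]
\item If $\delta=0$, then $j$ is finite and Lemma~\ref{rank Z finite} gives rank $\w+\lfloor\log_2(j+2)\rfloor<\w\cdot 2\leq\gamma_\beta$.
\end{itemize}
For (d), take $c\in\bZ^{(j)}$ and $d\in\bZ^{(j')}$ with $j\leq j'<\w^\beta$. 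The interval $(c,d)$ is a suborder of $\bZ\cdot j'+\w^*$ (cut off at $d$). So (c) applied to $j'$, together with Corollary~\ref{1.13}, bounds its rank below $\gamma_\beta$.

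\textbf{Step 2: lower bound in (a), then (b).}
\begin{itemize}
\item For $m=1$ and $\beta=\delta+1$: every $\bZ\cdot\w^\delta\cdot m'$ embeds in $\bZ\cdot\w^{\beta}$. When $\delta\geq1$ the inductive hypothesis gives rank at least $\gamma_\delta+\lfloor\log_2m'\rfloor$. When $\delta=0$, Lemma~\ref{rank Z finite} gives $\w+\lfloor\log_2(m'+1)\rfloor$. Taking the supremum over $m'$ yields $\rk(\bZ\cdot\w^\beta)\geq\gamma_\delta+\w=\gamma_\beta$.
\item For $m=1$ and $\beta$ limit: take the supremum of $\gamma_\delta$ over $\delta<\beta$.
\item For general $m$, with $k=\lfloor\log_2m\rfloor$: argue by inner induction on $k$, as in Step~2 of Theorem~\ref{rank of omega power times m}, using Proposition~\ref{interval characterization}. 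Cut at any point $c$ of the copy with index $2^{k-1}$ of $\bZ\cdot\w^\beta$ inside $\bZ\cdot\w^\beta\cdot 2^k$.
\item The left side contains $\bZ\cdot\w^\beta\cdot 2^{k-1}$.
\item The right side is $\w+\bZ\cdot\w^\beta+\bZ\cdot\w^\beta\cdot(2^{k-1}-1)$, by additive principality of the tail of $\w^\beta$. Hence it also contains $\bZ\cdot\w^\beta\cdot2^{k-1}$.
\end{itemize}
For (b): for $c\in\bZ^{(j)}$, the tail $(\bZ\cdot\w^\beta)_{>c}$ is $\w+\bZ\cdot(\w^\beta)_{>j}\cong\w+\bZ\cdot\w^\beta$ (Remark~\ref{additive principal}). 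It therefore contains a copy of $\bZ\cdot\w^\beta$, and the $m=1$ lower bound gives (b).

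\textbf{Step 3: upper bound in (a).} Apply Lemma~\ref{pigeonhole for intervals} to $Y=\bZ\cdot\w^\beta\cdot m$ with $\gamma=\gamma_\beta$, $r=m$, and $M_p$ the $m$ consecutive copies of $\bZ\cdot\w^\beta$.
\begin{itemize}
\item Condition (1) of the lemma is exactly (b).
\item For condition (2), let $J$ be a convex set containing no tail of any $M_p$. If $J$ met two copies $M_{p-1}$ and $M_p$, it would contain a tail of $M_{p-1}$. So $J\subseteq M_p$ for a single $p$.
\item Since $J$ contains no tail of $M_p$, it is bounded above by some $d\in\bZ^{(j)}$ with $j<\w^\beta$. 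Hence $J\subseteq\bZ\cdot j+\w^*$ up to isomorphism, and (c) gives $\rk(J)<\gamma_\beta$.
\item The lemma then gives $\rk(Y)<\gamma_\beta+n$ whenever $2^n>m$, that is, $\rk(Y)\leq\gamma_\beta+k$.
\end{itemize}

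The main obstacle is condition (2) in Step 3: one must check that a convex set avoiding all tails is genuinely trapped inside a bounded initial part of a single copy. One must also verify the embedding $\bZ\cdot j+\w^*\hookrightarrow\bZ\cdot\w^\delta\cdot(l+1)$ carefully, including the edge case $\beta=1$, where $\delta=0$ and Lemma~\ref{rank Z finite} replaces the inductive hypothesis.
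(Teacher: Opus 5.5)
Your proposal is correct and follows essentially the same route as the paper. Both use a simultaneous transfinite induction in which (c) and (d) come from the inductive hypothesis via monotonicity, the lower bounds come from suborders and the cut at the copy of index $2^{k-1}$, (b) comes from additive principality, and the upper bound comes from Lemma~\ref{pigeonhole for intervals}, with condition (1) supplied by (b) and condition (2) by (c).

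The differences are minor. You absorb the final $\w^*$ by embedding $\bZ\cdot j+\w^*$ into $\bZ\cdot\w^\delta\cdot(l+1)$, whereas the paper uses its Claim together with Proposition~\ref{rank of sums}. You also get the $m=1$ upper bound from the pigeonhole lemma with $r=1$, whereas the paper uses a direct cut argument. In the $\delta=0$ case, $\w+\lfloor\log_2(j+2)\rfloor$ is only an upper bound, obtained from $\bZ\cdot j+\w^*\leq\bZ\cdot(j+1)$, not the exact rank; this does not affect the argument.
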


\begin{proof}
By transfinite induction on $\beta\geq 1$, proving (a)--(d) simultaneously. Fix $\beta\geq 1$ and assume (a)--(d) hold for all $\beta'$ with $1\leq\beta'<\beta$ (vacuous when $\beta=1$). We proceed in the order: Claim, (c), (d), (a) for $m=1$, (b), (a) for general $m$.

\smallskip\noindent\emph{Claim.} For every $\sigma$ with $1\leq\sigma<\w^\beta$, $\rk(\bZ\cdot\sigma)+2<\w\cdot(1+\beta)$.
\\ 
\smallskip\noindent\emph{Proof of Claim.} Let $\w^\gamma\cdot c$ be the leading Cantor normal form term of $\sigma$, with $\gamma<\beta$, $c\geq 1$, and $\sigma<\w^\gamma\cdot(c+1)$. Then $\rk(\bZ\cdot\sigma)\leq\w\cdot(1+\gamma)+\lfloor\log_2(c+1)\rfloor$ by inductive hypothesis on (a) when $\gamma\geq 1$, or by Lemma~\ref{rank Z finite} when $\gamma=0$. Hence $\rk(\bZ\cdot\sigma)+2<\w\cdot(1+\gamma)+\w=\w\cdot(1+\gamma+1)\leq\w\cdot(1+\beta)$, where the last inequality uses $\gamma+1\leq\beta$.
\finishclaim

\smallskip\noindent\emph{Proof of (c).} For $j=0$, $\bZ\cdot 0+\w^*=\w^*$ has rank $\w<\w\cdot(1+\beta)$. For $j\geq 1$, $\bZ\leq\bZ\cdot j$ gives $\rk(\bZ\cdot j)\geq\rk(\bZ)=\w+1>\w=\rk(\w^*)$, then Proposition~\ref{rank of sums} yields $\rk(\bZ\cdot j+\w^*)\leq\rk(\bZ\cdot j)+1<\w\cdot(1+\beta)$ by the Claim.
\\
\smallskip\noindent\emph{Proof of (d).} It follows directly by (c) since every such interval is contained in an initial segment. 

\smallskip\noindent\emph{Proof of (a) for $m=1$: $\rk(\bZ\cdot\w^\beta)=\w\cdot(1+\beta)$.}
\\
\smallskip\noindent\textit{Lower bound.} If $\beta=\delta+1$ with $\delta=0$, Lemma~\ref{rank Z finite} gives $\rk(\bZ\cdot m)=\w+\lfloor\log_2(m+1)\rfloor\geq\w+\lfloor\log_2 m\rfloor$ for every $m\geq 1$, and $\bZ\cdot m\leq\bZ\cdot\w$ gives $\rk(\bZ\cdot\w)\geq\sup_m(\w+\lfloor\log_2 m\rfloor)=\w\cdot 2=\w\cdot(1+\beta)$. If $\beta=\delta+1$ with $\delta\geq 1$, inductive hypothesis on (a) gives $\rk(\bZ\cdot\w^\delta\cdot m)=\w\cdot(1+\delta)+\lfloor\log_2 m\rfloor$ for every $m\geq 1$, and $\bZ\cdot\w^\delta\cdot m\leq\bZ\cdot\w^{\delta+1}$ gives $\rk(\bZ\cdot\w^{\delta+1})\geq\sup_m(\w\cdot(1+\delta)+\lfloor\log_2 m\rfloor)=\w\cdot(1+\delta)+\w=\w\cdot(1+\beta)$. If $\beta$ is a limit ordinal, for every $\gamma$ with $1\leq\gamma<\beta$, inductive hypothesis on (a) gives $\rk(\bZ\cdot\w^\gamma)=\w\cdot(1+\gamma)$, and $\bZ\cdot\w^\gamma\leq\bZ\cdot\w^\beta$ gives $\rk(\bZ\cdot\w^\beta)\geq\w\cdot(1+\gamma)$; by continuity of $\gamma\mapsto 1+\gamma$ at the limit $\beta$, $\rk(\bZ\cdot\w^\beta)\geq\sup_{\gamma<\beta}\w\cdot(1+\gamma)=\w\cdot(1+\beta)$.

\smallskip\noindent\textit{Upper bound.} Suppose $\rk(\bZ\cdot\w^\beta)\geq\w\cdot(1+\beta)+1$. By Proposition~\ref{interval characterization}, some cut $c$ has $\min(\rk((\bZ\cdot\w^\beta)_{<c}),\rk((\bZ\cdot\w^\beta)_{>c}))\geq\w\cdot(1+\beta)$, hence in particular its left side has rank $\geq\w\cdot(1+\beta)$. The left side is $\bZ\cdot j+\w^*$ for some $j<\w^\beta$, of rank strictly below $\w\cdot(1+\beta)$ by (c) --- contradiction.

\smallskip\noindent\emph{Proof of (b).} The set $\alpha'=(\omega^\beta)_{>j}$ has order type $\w^\beta$ (Remark~\ref{additive principal}), then $\bigsqcup_{k\in\alpha'}\bZ^{(k)}\cong\bZ\cdot\w^\beta$ embeds in $(\bZ\cdot\w^\beta)_{>c}$, giving $\rk((\bZ\cdot\w^\beta)_{>c})\geq\rk(\bZ\cdot\w^\beta)=\w\cdot(1+\beta)$ by (a) for $m=1$.

\smallskip\noindent\emph{Proof of (a) for general $m$.}

\smallskip\noindent\textit{Lower bound.} Since $\bZ\cdot\w^\beta\cdot 2^k\leq\bZ\cdot\w^\beta\cdot m$ for $2^k\leq m$, it suffices to show $\rk(\bZ\cdot\w^\beta\cdot 2^k)\geq\w\cdot(1+\beta)+k$ by induction on $k$. For $k=0$: just established. For $k\geq 1$: index the $2^k$ copies of $\bZ\cdot\w^\beta$ as $0,1,\dots,2^k-1$. Any cut at any element of the copy of index $2^{k-1}$ gives a left side containing the copies of indices $0,\dots,2^{k-1}-1$ (hence $\bZ\cdot\w^\beta\cdot 2^{k-1}$ as a suborder) and a right side containing the tail of the copy of index $2^{k-1}$ (which contains $\bZ\cdot\w^\beta$ by the proof of (b)) followed by the copies of indices $2^{k-1}+1,\dots,2^k-1$ (hence also $\bZ\cdot\w^\beta\cdot 2^{k-1}$ as a suborder). By inductive hypothesis on $k$, each side has rank at least $\w\cdot(1+\beta)+(k-1)$, and Proposition~\ref{interval characterization} gives $\rk(\bZ\cdot\w^\beta\cdot 2^k)\geq\w\cdot(1+\beta)+k$.

\smallskip\noindent\textit{Upper bound.} The $m$ copies of $\bZ\cdot\w^\beta$ are pairwise disjoint convex suborders of $\bZ\cdot\w^\beta\cdot m$. Apply Lemma~\ref{pigeonhole for intervals} with $\gamma=\w\cdot(1+\beta)$ and $r=m$. Condition~(1): by (b). 
Condition~(2): a convex suborder not containing a tail of any copy of $\bZ\cdot\w^\beta$ is contained in a single copy; 
within $\bZ\cdot\w^\beta$ it is bounded above, hence it is either an initial segment $\bZ\cdot j+\w^*$ or a bounded interval, each of rank strictly below $\gamma$ by (c) and (d). Since $\gamma$ is a limit ordinal, Lemma~\ref{pigeonhole for intervals} gives $\rk(\bZ\cdot\w^\beta\cdot m)\leq\w\cdot(1+\beta)+\lfloor\log_2 m\rfloor$.
\end{proof}

\begin{corollary}[Rank of $\bZ\cdot\alpha$]\label{rank Z CNF}
Let $\alpha$ be a countable ordinal.
\begin{enumerate}
\item If $\alpha=m$ is finite, then $\rk(\bZ\cdot m)=\w+\lfloor\log_2(m+1)\rfloor$.
\item If $\alpha\geq\w$ has Cantor normal form leading term $\w^{\beta_1}\cdot c_1$ ( $\beta_1\geq 1$), then $\rk(\bZ\cdot\alpha)=\w\cdot(1+\beta_1)+\lfloor\log_2 c_1\rfloor$.
\end{enumerate}
In particular, for $\alpha\geq\w$ the rank depends only on the leading term of the Cantor normal form.
\end{corollary}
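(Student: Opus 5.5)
Part (1) is exactly Lemma~\ref{rank Z finite}, so only part (2) needs an argument. Write $\alpha=\w^{\beta_1}\cdot c_1+\rho$ with $\rho<\w^{\beta_1}$. We follow the proof of Theorem~\ref{rank CNF}, replacing Theorem~\ref{rank of omega power times m} by Theorem~\ref{rank of Z times omega power}.

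\emph{Lower bound.} Since $\bZ\cdot\w^{\beta_1}\cdot c_1$ is an initial segment of $\bZ\cdot\alpha$, Corollary~\ref{1.13} together with Theorem~\ref{rank of Z times omega power}(a) gives $\rk(\bZ\cdot\alpha)\geq\w\cdot(1+\beta_1)+\lfloor\log_2 c_1\rfloor$.

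\emph{Upper bound.} Let $\gamma=\w\cdot(1+\beta_1)$, which is a limit ordinal. Take $M_0,\dots,M_{c_1-1}$ to be the $c_1$ consecutive copies of $\bZ\cdot\w^{\beta_1}$ coming from the leading term. These are pairwise disjoint, convex and ordered, and we apply Lemma~\ref{pigeonhole for intervals} with $r=c_1$.
\begin{itemize}
\item Condition~(1) is Theorem~\ref{rank of Z times omega power}(b).
\item For condition~(2), let $J$ be a convex suborder of $\bZ\cdot\alpha$ containing no tail of any $M_p$. Either $J$ meets some $M_p$ in a set bounded above in $M_p$, or $J$ lies inside the final block $\bZ\cdot\rho$.
\end{itemize}

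\emph{First case of condition~(2).} The intersection $J\cap M_p$ is contained in an initial segment $\bZ\cdot j+\w^*$ with $j<\w^{\beta_1}$. If $J$ also extends to the left into $M_{p-1}$, the part in $M_{p-1}$ is contained in a tail of $M_{p-1}$ that $J$ does not fully contain. Since $J$ is convex and contains no tail of $M_{p-1}$, $J$ cannot enter $M_{p-1}$ at all unless it is a tail of it. So $J\subseteq M_p$, and $J$ is bounded above inside $M_p$. Hence $J$ is contained in $\bZ\cdot j+\w^*$ for some $j<\w^{\beta_1}$, and Theorem~\ref{rank of Z times omega power}(c), with Corollary~\ref{1.13}, gives $\rk(J)<\gamma$.

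\emph{Second case of condition~(2).} If $J\subseteq\bZ\cdot\rho$ (possibly touching the end of $M_{c_1-1}$, which is excluded as above), then $\rk(J)\leq\rk(\bZ\cdot\rho)$. For $\rho\geq1$, the Claim inside the proof of Theorem~\ref{rank of Z times omega power}, applied with $\sigma=\rho<\w^{\beta_1}$, gives $\rk(\bZ\cdot\rho)<\gamma$. This Claim was proved assuming (a) below $\beta_1$, which now holds for all $\beta$, so it can be used here. For $\rho=0$ there is nothing to check.

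Lemma~\ref{pigeonhole for intervals} then yields $\rk(\bZ\cdot\alpha)<\gamma+n$ whenever $2^n>c_1$, that is, $\rk(\bZ\cdot\alpha)\leq\gamma+\lfloor\log_2 c_1\rfloor$.

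\emph{Main obstacle.} The delicate step is the case analysis in condition~(2): we must confirm that a convex set avoiding all tails cannot straddle a boundary between two copies of $\bZ\cdot\w^{\beta_1}$, nor straddle the boundary between $M_{c_1-1}$ and $\bZ\cdot\rho$, except by being confined to one side. This holds because each $M_p$ has no maximum, so any convex set reaching past the top of $M_p$ contains a tail of $M_p$.
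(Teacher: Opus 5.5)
Your proposal is correct and follows the paper's proof essentially step for step: the lower bound comes from the initial segment $\bZ\cdot\w^{\beta_1}\cdot c_1$ and Theorem~\ref{rank of Z times omega power}(a). The upper bound comes from Lemma~\ref{pigeonhole for intervals} with $\gamma=\w\cdot(1+\beta_1)$ and $r=c_1$, taking condition~(1) from part~(b) and condition~(2) from part~(c) together with a bound on $\rk(\bZ\cdot\rho)$.

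The only cosmetic difference is how you bound $\rk(\bZ\cdot\rho)$. You quote the Claim from the proof of Theorem~\ref{rank of Z times omega power}, which is legitimate since it is established at stage $\beta_1$. The paper instead redoes that estimate directly via Lemma~\ref{rank Z finite} and part~(a).
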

\begin{proof}
Part~(1) is Lemma~\ref{rank Z finite}. For part~(2), write $\alpha=\w^{\beta_1}\cdot c_1+\rho$ with $\rho<\w^{\beta_1}$.
\\
\noindent\textit{Lower bound.} $\bZ\cdot\w^{\beta_1}\cdot c_1\leq\bZ\cdot\alpha$, then $\rk(\bZ\cdot\alpha)\geq\w\cdot(1+\beta_1)+\lfloor\log_2 c_1\rfloor$ by Theorem~\ref{rank of Z times omega power}(a).
\noindent\textit{Upper bound.} The $c_1$ copies of $\bZ\cdot\w^{\beta_1}$ from the leading term are pairwise disjoint convex suborders of $\bZ\cdot\alpha$. Apply Lemma~\ref{pigeonhole for intervals} with $\gamma=\w\cdot(1+\beta_1)$ and $r=c_1$. Condition~(1): by Theorem~\ref{rank of Z times omega power}(b). Condition~(2): a convex suborder not containing a tail of any copy of $\bZ\cdot\w^{\beta_1}$ is contained either in a single copy (where, being bounded above, it is an initial segment $\bZ\cdot j+\w^*$ or a bounded interval, each of rank $<\gamma$ by Theorem~\ref{rank of Z times omega power}(c)--(d)), or in the tail piece $\bZ\cdot\rho$ with $\rho<\w^{\beta_1}$ (vacuous if $\rho=0$). In the latter case (with $\rho\geq 1$), $\rk(\bZ\cdot\rho)<\gamma$: for $\rho<\w$, $\rk(\bZ\cdot\rho)=\w+\lfloor\log_2(\rho+1)\rfloor<\w\cdot 2\leq\gamma$ by Lemma~\ref{rank Z finite} and $\beta_1\geq 1$; for $\rho\geq\w$, there exists $\beta'\geq1$ and $c' \in \omega\backslash\{0\}$ such that $\rho < \omega^{\beta'}\cdot c' < \omega^{\beta_1}$ (concretely, with $\omega^{\beta'}\cdot d$ the leading term of $\rho$, take $c'=d+1$; then $\beta'<\beta_1$). Then, by Theorem~\ref{rank of Z times omega power}(a), $\rk(\bZ\cdot\rho)\leq\w\cdot(1+\beta')+\lfloor\log_2(c')\rfloor<\w\cdot(1+\beta_1)$. Since $\gamma$ is a limit ordinal, Lemma~\ref{pigeonhole for intervals} gives $\rk(\bZ\cdot\alpha)\leq\w\cdot(1+\beta_1)+\lfloor\log_2 c_1\rfloor$.
\end{proof}

\begin{remark}\label{rank Z plateau}
The rank function $\alpha\mapsto\rk(\bZ\cdot\alpha)$ is locally constant across the gaps between consecutive powers of $\w$: it depends only on the leading Cantor normal form term of $\alpha$, then $\rk(\bZ\cdot(\w^{\beta_1}\cdot c_1+\rho))=\w\cdot(1+\beta_1)+\lfloor\log_2 c_1\rfloor$ for every $\rho<\w^{\beta_1}$. In particular, $\rk(\bZ\cdot(\w+n))=\w\cdot 2$ for every $n<\w$: once the first $\w$ copies of $\bZ$ have been accumulated, no finite number of additional copies changes the rank, and the next increment requires reaching $\bZ\cdot(\w\cdot 2)$ (whose rank is $\w\cdot 2+1$). This contrasts with the finite regime, where $\rk(\bZ\cdot m)=\w+\lfloor\log_2(m+1)\rfloor$ grows unboundedly as a step function of $m$ (Lemma~\ref{rank Z finite}), and reflects the property of $\w^{\beta_1}$ in Remark \ref{additive principal} exploited in part~(b) of Theorem~\ref{rank of Z times omega power}.
\end{remark}

The family $\{\bZ\cdot\alpha\mid\alpha<\w_1\}$ almost witnesses the Rank Property for linear orders on its own. Indeed, for every countable ordinal $\gamma\geq\w+1$ there exists $\alpha<\w_1$ with $\rk(\bZ\cdot\alpha)=\gamma$: if $\gamma=\w\cdot\delta+k$ with $\delta\geq 2$ and $0\leq k<\w$, take $\alpha=\w^{\beta_1}\cdot 2^k$ where $\beta_1$ satisfies $1+\beta_1=\delta$; if $\gamma=\w+k$ with $k\geq 1$, take $\alpha=2^k-1$. The only values not realized are the finite ordinals and $\w$ itself, which are already achieved by the finite linear orders and by $\w$ respectively.

As in the case of well-orders (Remark~\ref{rank vs Hausdorff}), the rank of $\bZ\cdot\alpha$ admits a clean expression in terms of the Hausdorff rank. For $\alpha\geq\w$ with Hausdorff rank $\rk^{H}(\alpha)=\beta_1$, the linear order $\bZ\cdot\alpha$ has Hausdorff rank $\rk^{H}(\bZ\cdot\alpha)=1+\beta_1$ (under finite condensation---collapsing each maximal block of points with only finitely many points between any two of them---each copy $\bZ^{(j)}$ forms a single class, whereas distinct copies are separated by infinitely many points, so the classes are the copies ordered as $\alpha$; one condensation step thus reduces $\bZ\cdot\alpha$ to $\alpha$, whence $\rk^{H}(\bZ\cdot\alpha)=1+\rk^{H}(\alpha)$). Substituting $\rk^{H}(\bZ\cdot\alpha)=1+\beta_1$ into Corollary~\ref{rank Z CNF} gives
$$\rk(\bZ\cdot\alpha)=\w\cdot\rk^{H}(\bZ\cdot\alpha)+\lfloor\log_2 c_1\rfloor,$$
where $c_1$ is the leading coefficient of the Cantor normal form of $\alpha$. Thus the same formula relating $\rk$ to $\rk^{H}$ holds for $\bZ\cdot\alpha$ as for well-orders, even though no $\bZ\cdot\alpha$ is itself a well-order and the values differ (for instance $\rk(\w)=\w$ while $\rk(\bZ\cdot\w)=\w\cdot 2$).

\section{Questions}\label{sec questions}

The class of finite partial orders satisfies AP---indeed SAP~\cite{schmerl-posets}---, so its Fra\"iss\'e limit (the generic poset) lies in $\sigma\cF$ with rank $\infty$. Nonetheless we do not know whether it has the Rank Property: the general method of Section~\ref{FAP and FEP section} covers classes with both FAP and FEP; partial orders have FEP (a new maximum is a full good type) but lack FAP---an amalgam must respect transitivity, so the kernel construction need not yield a valid partial order.

\begin{question}\label{Q: RP vs Fraisse}
Which Fra\"iss\'e classes have the Rank Property? In particular, does the class of finite partial orders have RP?
\end{question}

\noindent Even a partial result would be of interest: for instance, whether the countable ranks of $\sigma\cF$ are cofinal in $\w_1$. This is necessary for RP but strictly weaker---the family $\{\bZ\cdot\alpha\mid\alpha\in\w_1\}$ of Section~\ref{sec linear orders} is cofinal in $\w_1$ yet skips $\w$ and every finite value, so cofinality alone does not yield RP.

\begin{question}\label{Q: weakening FAP}
What is the weakest amalgamation condition on $\cF$ that, together with a suitable extension property, implies RP? In particular, can the free amalgamation property in Theorem~\ref{every rank with extra caso general} be replaced by the strong amalgamation property?
\end{question}

\begin{question}\label{Q: model theory of RP}
What are the model-theoretic consequences of RP for the Fra\"iss\'e limit? 
\end{question}

Here RP appears to cut across the standard model-theoretic dividing lines---stability, NIP, and simplicity; see \cite[Ch.~2]{simon-nip} for IP, NIP, and stability, and \cite[Ch.~7]{tent-ziegler} for simplicity. It is compatible with IP (finite graphs have RP, while the random graph has IP) and with NIP (finite linear orders have RP, while $(\bQ,<)$ is NIP), and likewise with simplicity (the random graph is simple) and its failure ($(\bQ,<)$ is not simple). Thus RP implies none of stability, NIP, IP, or simplicity, and it is unclear whether it corresponds to any tameness notion at all---the substance of Question~\ref{Q: model theory of RP}.

For the class of finite graphs, define $N(n)$ to be the minimum $N$ such that every $X\in\sigma\cF$ containing all graphs of cardinality $N$ as induced subgraphs satisfies $\rk(X)\geq n$. The first values are $N(1)=1$, $N(2)=2$, and $4\leq N(3)\leq|H_3|$.

\begin{question}\label{Q: N(n)}
What is the growth rate of $N(n)$ for $n\in\w$?
\end{question}

\noindent For finite linear orders the analogue is trivial: since every linear order of size $m$ contains all linear orders of size $\leq m$, Theorem~\ref{rank of finite linear orders} gives $N(n)=2^n-1$. The graph case is genuinely harder, because containing all structures of a given size is a much weaker condition than having high rank.

\subsection*{Acknowledgements}
We are deeply indebted to Wies\l{}aw Kubi\'s. The rank function studied in this paper was introduced by him jointly with Saharon Shelah, and the line of research developed here was opened to us by Kubi\'s during our visit to Prague. We thank him for sharing this material with us, for many fruitful discussions in the early stages of the project, and for his continued encouragement throughout its development. We also thank Fernando Hern\'andez, Osvaldo Guzm\'an and Assaf Rinot for reading earlier versions of this manuscript and for their valuable comments. Both authors were partially supported by the PAPIIT grant IA\,104124 and the SECIHTI grant CBF2023-2024-903. The first author was additionally supported by the Israel Science Foundation (Grant Agreement 203/22). The second author was additionally supported by the Austrian Science Fund (FWF) [10.55776/STA139] and the BMFWF -- Federal Ministry of Women, Science and Research (Ernst Mach Grant - worldwide, OeAD).

\bibliography{bib}{}
\bibliographystyle{amsplain}

\end{document}